
\documentclass{article}

\usepackage[reqno]{amsmath}
\usepackage{amssymb,amsthm,upref,amscd}
\usepackage[T1]{fontenc}
\usepackage{times}

\newtheorem{theorem}{Theorem}[section]

\newtheorem{definition}[theorem]{Definition}

\newtheorem{lemma}[theorem]{Lemma}
\newtheorem{proposition}[theorem]{Proposition}
\newtheorem{remark}[theorem]{Remark}

\theoremstyle{definition} \theoremstyle{remark}
\numberwithin{equation}{section}

\begin{document}

\title{\textbf{Two Positive Normalized Solutions on Star-shaped Bounded Domains to the Br\'ezis-Nirenberg Problem, I: Existence \ \\
}}
\author{Linjie Song$^{\mathrm{a,b,c,}}$\thanks{%
Email: songlinjie@mail.tsinghua.edu.cn.}\
and \ Wenming Zou$^{\mathrm{a}}$\thanks{%
Email: zou-wm@mail.tsinghua.edu.cn.} \ \\
\\
{\small $^{\mathrm{a}}$Department of Mathematical Sciences, Tsinghua University, Beijing 100084, China}\\
{\small $^{\mathrm{b}}$Institute of Mathematics, AMSS, Chinese Academy of Science,
Beijing 100190, China}\\
{\small $^{\mathrm{c}}$University of Chinese Academy of Science,
Beijing 100049, China}
}
\date{}
\maketitle

\begin{abstract}
We develop a new framework to prove the existence of two positive solutions with prescribed mass on star-shaped bounded domains: one is the normalized ground state and another is of M-P type. We merely address the Sobolev critical cases since the Sobolev subcritical ones can be addressed by following similar arguments and are easier. Our framework is based on some important observations, that, to the best of our knowledge, have not appeared in previous literatures. Using these observations, we firstly establish the existence of a normalized ground state solution, whose existence is unknown so for. Then we use some novel ideas to obtain the second positive normalized solution, which is of M-P type. It seems to be the first time in the literatures to get two positive solutions under our settings, even in the Sobolev subcritical cases. We further remark that our framework is applicable to many other equations.

\bigskip

\noindent\textbf{Keywords:} Normalized solutions; positive solutions; star-shaped bounded domains; nonlinear Schr\"{o}dinger equations.

\noindent\textbf{2010 MSC:} 35A15, 35J20, 35Q55, 35C08

\noindent\textbf{Data availability statement:} My manuscript has no associate data.


\end{abstract}


\medskip

\section{Introduction}

Our main goal in this paper is to establish the existence of two positive solutions for the semi-linear elliptic equation:
\begin{equation} \label{eq1.1}
\left\{
\begin{aligned}
& -\Delta u - |u|^{2^\ast-2}u - a|u|^{p-2}u = \lambda u, \quad x \in \Omega, \\
& u > 0 \quad \text{in } \Omega, \quad u = 0 \quad \text{on } \partial \Omega, \quad \int_{\Omega}|u|^2dx = c,
\end{aligned}
\right.
\end{equation}
where $c > 0$, $\Omega \subset \mathbb{R}^N (N \geq 3)$ is smooth, bounded, star-shaped, $2^\ast = 2N/(N-2)$, and $a, p$ are in one of the following three cases: (1) $a = 0$, (2) $a > 0, 2 + 4/N < p < 2^\ast$, and (3) $a < 0, 2 < p < 2^\ast$. 

\vskip0.1in
Our study can be viewed as an extension of the classical Br\'ezis-Nirenberg problem, which considers the equation:
\begin{equation} \label{eq1.2}
	\left\{
	\begin{aligned}
		& -\Delta u = \lambda u + |u|^{2^\ast-2}u \quad \text{in } \Omega, \\
		& u = 0 \quad \text{on } \partial \Omega.
	\end{aligned}
	\right.
\end{equation}
Br\'ezis and Nirenberg \cite{BNi} found out that the existence of a solution depends heavily on the values of $\lambda$ and $N$. They also considered perturbations of the nonlinearity as the form of $|u|^{2^\ast-2}u + f(u)$. This problem is connected to some variational problems in geometry and physics, which are lack of compactness. The most well-known example is Yamabe's problem.  Since 1983, there has been a considerable number of papers on problem \eqref{eq1.2} and many results on the existence, non-existence and multiplicity of non-trivial solutions (with $\lambda$ fixed) were obtained, see, e.g., \cite{ABP,AGGS,CFP,CFS,CSS,CSZ,CW,DS,SZ}.

\vskip0.1in
An open problem is: when will the \textit{normalized solutions} (i.e., solutions satisfying a condition like $\int_{\Omega}|u|^2dx = c$ where $c > 0$ is prescribed) exist? Many researchers expected to give an answer, but very little progress has been obtained. As for as we know, there is only one paper \cite{NTV2} touching this topic. In \cite{NTV2}, a local minimizer was found when $c$ is small. However, people don't know whether this solution is a \textit{normalized ground state} or not, i.e., a solution with least energy among all solutions under the $L^2$ constraint (see Definition \ref{gss} below). In the present paper, we are going to provide a new framework to study the existence of normalized solutions on bounded domains, and we believe this will open the door to study the normalized solutions for the Br\'ezis-Nirenberg problem. As a foresight, we will establish the existence of the normalized ground state solution firstly, and then obtain the second solution which is of M-P type.

\vskip0.1in
Eq \eqref{eq1.1} comes from finding standing wave solutions of the nonlinear Schr\"{o}dinger equation (NLS):
\begin{equation} \label{eq1.5}
\left\{
\begin{aligned}
& i\partial_{t} \Phi + \Delta \Phi + |\Phi|^{2^\ast - 2}\Phi + a|\Phi|^{p - 2}\Phi = 0, \quad (t,x) \in \mathbb{R} \times \Omega, \\
& \Phi(t,x) = 0, \quad (t,x) \in \mathbb{R} \times \partial \Omega.
\end{aligned}
\right.
\end{equation}
NLS on bounded domains appears in different physical contexts. For instance, in nonlinear optics, with $N = 2$ and the nonlinearity being $|u|^{2}u$, they describe the propagation of laser beams in hollow-core fibers \cite{Ag}. In Bose-Einstein condensation, when $N \leq 3$ and the nonlinearity is $|u|^{2}u$, they model the presence of an infinite well-trapping potential \cite{BP}. For physical reasons, people often seek normalized solutions. Moreover, establishing the existence of these solutions is crucial to study orbital stability/instability for standing waves. Normalized solutions on entire space have been widely studied and the $L^2$-critical exponent $2 + 4/N$ plays an important roles, see, e.g., \cite{BN,BV,Jean2,S1,S2,Song,Stu1,Stu2,Stu3,WW} and their references. In a series work \cite{HPS,HS,HS2,Song,Song2,Song3,Song4}, the first author with his co-authors developed new frameworks to prove the \textit{existence} and \textit{uniqueness} of normalized solutions. Furthermore, people can refer \cite{CHMS,HLS} for biharmonic NLS and for \cite{EHLS,HLS} on the waveguide manifold. For the problems on bounded domains, due to the lack of invariance under scaling, many methods on the entire space do not work and there are few papers (\cite{HS,NTV,NTV2,PV,Song2}) on this topic (under the $L^2$-supercritical setting). Relying on bifurcation discussions, \cite{HS,NTV,Song2} established two positive solutions with small $L^2$ mass when $\Omega$ is a ball for specific Sobolev subcritical nonlinearities. For the pure power nonlinearity $|u|^{q-2}u$, a local minimizer was found in \cite{PV} when $2 + 4/N < q < 2^\ast$ and in \cite{NTV2} when $q = 2^\ast$. We remark that the Sobolev critical case was touched merely by \cite{NTV2}, while others dealed with subcritical situations. We also remark that people do not know whether the local minimizer obtained in \cite{NTV2} is a normalized ground state or not, whose existence is largely open so for. Furthermore, in \cite{PV}, the authors pointed the existence of the second normalized solution of M-P type with small $L^2$ mass in the Sobolev subcritical case. But they missed the proof of the boundedness for the (PS) sequence. This is one of the most important steps to study such questions. As for the Sobolev critical case, there is no results on the existence of the second normalized solution. In this paper, we aim to give positive answers for these open questions -the existence of the normalized ground state and the second solution of M-P type-. We merely address the Sobolev critical cases and discussions are easier when the nonlinearity is Sobolev subcritical. Using our framework, two positive normalized solutions can be established for many other equations and for more general nonlinearities.

\vskip0.1in
To find solutions of (\ref{eq1.1}), we search for critical points of the energy
\begin{equation*}
E(u) = \frac{1}{2}\int_{\Omega}|\nabla u|^{2}dx - \frac{1}{2^\ast}\int_{\Omega}|u^+|^{2^\ast}dx - \frac{a}{p}\int_{\Omega}|u^+|^{p}dx,
\end{equation*}
under the constraint
\begin{equation*}
\int_{\Omega}|u^+|^{2}dx = c,
\end{equation*}
where $u^+ = \max\{u,0\}$. We set
$$
S_c^+ := \left\{u \in H_0^1(\Omega): \int_{\Omega}|u^+|^2dx = c\right\}.
$$
Let $\mathcal{S}$ be the Sobolev best constant of $\mathcal{D}^{1,2}(\mathbb{R}^N)\hookrightarrow L^{2^\ast}(\mathbb{R}^N)$,
\begin{equation*}
 	\mathcal{S} :=\inf_{u \in \mathcal{D}^{1,2}(\mathbb{R}^N) \setminus \{0\} } \cfrac{\int_{\mathbb{R}^N} |\nabla u|^2 dx}{\left(\int_{\mathbb{R}^N} |u|^{2^\ast}dx\right)^{2/2^\ast}},
\end{equation*}
where $\mathcal{D}^{1,2}(\mathbb{R}^N)=\{u\in L^{2^\ast}(\mathbb{R}^N): |\nabla u| \in L^{2}(\mathbb{R}^N)\}$ with the norm $\left\| u \right\|_{\mathcal{D}^{1,2}}:=\left(\int_{\mathbb{R}^N}|\nabla u|^2dx\right)^{1/2}$. Let $\mathcal{C}_{r}$ be the best constant in the Gagliardo-Sobolev inequality with $r \in [2,2^\ast]$:
\begin{align*}
\|u\|_{L^r(\Omega)} \leq \mathcal{C}_{r}\|\nabla u\|_{L^2(\Omega)}^{\gamma_r}\|u\|_{L^2(\Omega)}^{1-\gamma_r},
\end{align*}
where
\begin{align*}
\gamma_r := N\left( \frac12 - \frac1r\right).
\end{align*}
Note that $\mathcal{C}_{2^\ast} = 1/\sqrt{\mathcal{S}}$.

\vskip0.1in
We remark that $E$ is unbounded from below on $S_c^+$ since the nonlinearity is $L^2$ supercritical, so we cannot find a global minimizer. For this reason, we introduce $\mathcal{G}$ (which did not appear in previous literatures), where
\begin{align*}
\mathcal{G} := \left\{u \in S_c^+: \int_{\Omega}|\nabla u|^2dx > \int_{\Omega}|u^+|^{2^\ast} dx + a\gamma_{p}\int_{\Omega}|u^+|^{p} dx \right\}.
\end{align*}
Noticing that all critical points are in $\mathcal{G}$, we can minimize $E$ constrained on $\mathcal{G}$ and establish the existence of the positive normalized ground state solution. As far as we know, it is the first time in literatures to obtain the normalized ground state for \eqref{eq1.1}, our result is totally new even in the Sobolev subcritical (and $L^2$-supercritical) cases.

\begin{definition} \label{gss}
     Let
     $$
     S_c := \left\{u \in H_0^1(\Omega): \int_{\Omega}|u|^2dx = c\right\}.
     $$
    We say $u_c \in S_c$ is a normalized ground state solution to \eqref{eq1.1}, if it is a solution having minimal energy among all the solutions which belong to $S_c$. Namely, if
     \begin{align*}
     \tilde{E}(u_c) = \inf\{\tilde{E}(u): u \in S_c, (\tilde{E}|_{S_c})'(u) = 0\},
     \end{align*}
     where
     \begin{equation*}
     \tilde{E}(u) = \frac{1}{2}\int_{\Omega}|\nabla u|^{2}dx - \frac{1}{2^\ast}\int_{\Omega}|u|^{2^\ast}dx - \frac{a}{p}\int_{\Omega}|u|^{p}dx.
     \end{equation*}
\end{definition}

\begin{theorem}
\label{thmB.2} Let $\Omega$ be smooth, bounded, and star-shaped with respect to the origin, and let $a, c, p$ satisfy one of the following conditions:
\begin{itemize}
\item [(1)] 
\begin{align} \label{a = 0}
	a=0 \hbox{ and } c < \sup_{u \in S_1^+}\left( \min\left\{\Lambda_{1,u}^{(N-2)/2}, \frac1N\mathcal{S}^{N/2}\Lambda_{2,u}^{-1}\right\}\right),
\end{align}
\item [(2)]  $a > 0, 2 + 4/N < p < 2^\ast$ \hbox{ and } 
\begin{align} \label{a > 0}
	c < \sup_{u \in S_1^+}\bigg( \min\bigg\{ & \max_{\tau \in (0,1)}\left\{\min\left\{\left(\tau\Lambda_{1,u}\right)^{(N-2)/2}, \left((1-\tau)\Lambda_{3,u} \right)^{2/(p-2)}\right\}\right\}, \nonumber \\
	& \max_{\xi \in (0,1)}\bigg\{\min\bigg\{\left( \frac12 - \frac{1}{p\gamma_p}\right)\xi^{(N-2)/2}\mathcal{S}^{N/2}\Lambda_{2,u}^{-1}, \nonumber \\
	& \left( \frac{1-\xi}{a\gamma_{p}\mathcal{C}_p^p}\right)^{2/(p-2)} \left(\left( \frac12 - \frac{1}{p\gamma_p}\right)\Lambda_{2,u}^{-1}\right)^{(p\gamma_p-2)/(p-2)} \bigg\}\bigg\}\bigg\}\bigg) \nonumber \\
\end{align}
\item [(3)]  $a < 0, 2 < p < 2^\ast$ \hbox{and} 
\begin{align} \label{a < 0}
	c < \sup_{u \in S_1^+}\bigg( & \min\bigg\{\max_{\tau \in (0,1)} \left\{\min\left\{\frac{\tau}N\mathcal{S}^{N/2}\Lambda_{2,u}^{-1}, \left(\frac{1-\tau}{N}\mathcal{S}^{N/2}\Lambda_{4,u} \right)^{2/p}\right\}\right\}, \nonumber \\
	& ~~~~~~~~~ \Lambda_{1,u}^{(N-2)/2}\bigg\}\bigg)  
\end{align}
\end{itemize}
where
\begin{align*}
	& \Lambda_{1,u} := \frac{\int_{\Omega}|\nabla u|^2dx}{\int_{\Omega}|u^+|^{2^\ast}dx}, \quad \quad \  \  \ \Lambda_{2,u} := \frac12\int_{\Omega}|\nabla u|^2dx, \\
	& \Lambda_{3,u} := \frac{\int_{\Omega}|\nabla u|^2dx}{a\gamma_p\int_{\Omega}|u^+|^{p}dx}, \quad \Lambda_{4,u} := \frac{p}{|a|}\int_{\Omega}|u^+|^pdx.
\end{align*}
Then the value
$$
\nu_{c}:= \inf_{\mathcal{G}}E \in
$$
\begin{align*}
\left\{
\begin{aligned}
& \left( 0,\frac1N\mathcal{S}^{N/2}\right)  \quad \text{if } a \leq 0, \\
& \left( 0,\left( \frac12 - \frac{1}{p\gamma_p}\right)\max_{\xi \in (0,1)}\min\left\{\xi^{(N-2)/2}\mathcal{S}^{N/2}, \left( \frac{1-\xi}{a\gamma_{p}\mathcal{C}_p^pc^{p(1-\gamma_p)/2}}\right) ^{2/(p\gamma_p-2)}\right\}\right)  \\
& ~~~~~~~~~~~~~~~~~~~~~~~~~~~~~~~~~~~~~~~~~~~~~~~~~~~~~~~~~~~~~~~~~~~~~~~~~ \text{if } a > 0 \text{ and } 2 + 4/N < p < 2^\ast,
\end{aligned}
\right.
\end{align*}
and $\nu_{c}$ is achieved in $\mathcal{G}$ by some $u_c \in S_c^+$. Furthermore, $u_c > 0$ is a normalized ground state to \eqref{eq1.1} with Lagrange multiplier $\lambda_c$ satisfying $\lambda_c > 0$ when $a \leq 0$ and $\lambda_c < \lambda_1(\Omega)$ when $a \geq 0$, where $\lambda_1(\Omega)$ is the first eigenvalue of $-\Delta$ on $\Omega$ with Dirichlet boundary condition.
\end{theorem}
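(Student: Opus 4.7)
The proof splits naturally into three phases: bounding $\nu_c$ both from above (to prevent Sobolev bubbling) and from below, extracting a minimizer of $E$ on $\mathcal{G}$, and identifying that minimizer as the advertised normalized ground state.

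For the energy bounds I would test $\nu_c$ using $u_v := \sqrt{c}\,v$, where $v\in S_1^+$ nearly realizes the appropriate sup in hypothesis \eqref{a = 0}-\eqref{a < 0}. The inclusion $u_v \in \mathcal{G}$ is equivalent to an inequality of the form
\[
c^{2/(N-2)}\Lambda_{1,v}^{-1} + a\gamma_p\,c^{(p-2)/2}\Lambda_{3,v}^{-1} < 1,
\]
and in cases (2)-(3) splitting the right-hand side by an auxiliary parameter $\tau\in(0,1)$ reproduces the first half of each minimum in the hypothesis; a similar splitting by $\xi$ in the condition $E(u_v)<$ threshold (with Gagliardo-Sobolev controlling the $L^p$ term) produces the second half. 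The lower bound $\nu_c>0$ is a direct consequence of the $\mathcal{G}$-condition: when $a\geq 0$ it forces $\|\nabla u\|^2 > \|u^+\|_{2^*}^{2^*}$ and thus
\[
E(u) > \tfrac{1}{N}\|\nabla u\|^2 - \tfrac{a}{p}\|u^+\|_p^p,
\]
which Poincaré together with Gagliardo-Sobolev controls uniformly from below on $S_c^+ \cap \mathcal{G}$; when $a<0$ the subcritical term enters with a favorable sign and a similar bound works.

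To produce a minimizer, take $u_n\in\mathcal{G}$ with $E(u_n)\to\nu_c$; the $\mathcal{G}$-constraint combined with the upper bound on $\nu_c$ gives a uniform $H_0^1$ bound, so up to a subsequence $u_n\rightharpoonup u$ with strong $L^r$ convergence for $r<2^*$, hence $u\in S_c^+$. Writing $w_n := u_n-u$ and applying Brezis-Lieb to $\|u_n^+\|_{2^*}^{2^*}$,
\[
E(u_n) = E(u) + \tfrac12\|\nabla w_n\|^2 - \tfrac{1}{2^*}\|w_n^+\|_{2^*}^{2^*} + o(1).
\]
If $w_n\not\to 0$ in $H_0^1$, the Sobolev inequality applied to $w_n$ forces $\liminf\|\nabla w_n\|^2\geq \mathcal{S}^{N/2}$, whence $\nu_c \geq E(u)+\tfrac{1}{N}\mathcal{S}^{N/2}$; combined with $E(u)\geq 0$ (obtained by passing the $\mathcal{G}$-condition to the weak limit and repeating the lower-bound argument from Phase 1) this contradicts the threshold $\nu_c<\tfrac{1}{N}\mathcal{S}^{N/2}$. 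So the convergence is strong, $u\in\overline{\mathcal{G}}$, and $E(u)=\nu_c$.

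Since $E$ depends on $u$ only through $u^+$ and $\|\nabla u^-\|^2\geq 0$, minimality forces $u^-\equiv 0$. The Pohozaev identity on the star-shaped $\Omega$ gives
\[
\|\nabla v\|^2 - \|v\|_{2^*}^{2^*} - a\gamma_p\|v\|_p^p = \tfrac12\int_{\partial\Omega}(\partial_\nu v)^2(x\cdot\nu)\,dS \geq 0
\]
for every solution $v\in S_c$, with strict inequality by Hopf's lemma, so $u$ cannot lie on $\partial\mathcal{G}$ and is therefore a free critical point of $E|_{S_c^+}$ with some Lagrange multiplier $\lambda_c$; the strong maximum principle then upgrades $u\geq 0$ to $u>0$, giving \eqref{eq1.1}. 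The same Pohozaev observation places every other solution into $\mathcal{G}$, so $\tilde E(\tilde u)\geq \nu_c = \tilde E(u)$, establishing the ground-state property. Testing \eqref{eq1.1} against $u$ yields $\lambda_c c = \|\nabla u\|^2 - \|u\|_{2^*}^{2^*} - a\|u\|_p^p$: for $a\leq 0$ the $\mathcal{G}$-condition with $\gamma_p<1$ gives $\lambda_c>0$; for $a\geq 0$, I would sharpen the Phase-1 test function to an $L^2$-normalized first Dirichlet eigenfunction and combine with the identity $\lambda_c c<2\nu_c$ (valid for $a\geq 0$ since the correction terms are non-positive and strictly so for the critical one) to conclude $\lambda_c<\lambda_1(\Omega)$. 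The main obstacle is Phase 2: simultaneously ruling out the Sobolev bubble (which needs the sharp threshold $\nu_c<\tfrac{1}{N}\mathcal{S}^{N/2}$) and forcing the weak limit off $\partial\mathcal{G}$ into the open constraint (which needs the star-shaped Pohozaev-Hopf identity); without the latter the Euler-Lagrange derivation picks up an extra multiplier and the identification with \eqref{eq1.1} breaks.
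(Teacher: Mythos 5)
Your Phase 2 contains the decisive gap. For a bare minimizing sequence there is no reason that $\liminf_n\|\nabla w_n\|_{L^2}^2 \geq \mathcal{S}^{N/2}$ when $w_n\not\to0$: that threshold follows from the Sobolev inequality only once one knows $\int_\Omega|\nabla w_n|^2dx-\int_\Omega|w_n^+|^{2^\ast}dx\to0$, and this identity is a Palais--Smale phenomenon, not a minimizing-sequence one. Likewise, your fallback ``$E(u)\geq0$, obtained by passing the $\mathcal{G}$-condition to the weak limit'' fails: both $\int_\Omega|\nabla u|^2dx$ and $\int_\Omega|u^+|^{2^\ast}dx$ are merely weakly lower semicontinuous, so the strict inequality defining $\mathcal{G}$ does not pass to the weak limit, and $u$ may leave $\overline{\mathcal{G}}$ entirely, where $E$ has no sign. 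Finally, your use of the Pohozaev identity to push $u$ off $\partial\mathcal{G}$ is circular: Pohozaev holds for solutions of the single-multiplier Euler--Lagrange equation, but if the minimizer sits on $\partial\mathcal{G}$ the minimization only yields a KKT condition with an extra multiplier attached to the boundary constraint, so the identity is not yet available --- a dependence you acknowledge in your own closing sentence without resolving it.

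The paper closes exactly this hole with two ingredients you are missing. First, it proves the strict separation $\nu_c=\inf_{\mathcal{G}}E<\inf_{\partial\mathcal{G}}E$ (inequality \eqref{bbou} of Proposition \ref{proppre}); this keeps any minimizing sequence at positive $H_0^1$-distance from $\partial\mathcal{G}$, so Ekeland's variational principle upgrades it to a sequence with $(E|_{S_c^+})'(u_n)\to0$. Second, with the PS property in hand the weak limit $u_c$ is itself a constrained critical point; Pohozaev (now legitimately applicable, since $\Omega$ is star-shaped) places $u_c\in\mathcal{G}$, hence $E(u_c)\geq\nu_c$, while Br\'ezis--Lieb together with $\lambda_n\to\lambda_c$ gives $\int_\Omega|\nabla w_n|^2dx=\int_\Omega|w_n|^{2^\ast}dx+o_n(1)$ and thus $E(w_n)=l/N+o_n(1)\geq o_n(1)$, forcing $E(u_c)=\nu_c$ and $l=0$. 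Note that this argument never compares $\nu_c$ with $\frac1N\mathcal{S}^{N/2}$ to exclude a bubble --- the interiority of the limit does all the work; the sharp threshold enters only in Theorem \ref{thmB.5}. Two smaller slips: ``minimality forces $u^-\equiv0$'' is not immediate, since replacing $u$ by $u^+$ lowers $E$ but may exit $\mathcal{G}$ (the paper instead tests the Euler--Lagrange equation against $u_c^-$ and then applies the strong maximum principle); and your route to $\lambda_c<\lambda_1(\Omega)$ through $\lambda_c c<2\nu_c$ and the eigenfunction test function needs $\sqrt{c}\,e_1\in\mathcal{G}$, which the hypotheses do not guarantee --- the paper simply multiplies the equation by the positive eigenfunction $e_1$, integrates, and drops the positive terms.
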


Note that $u_c$ is a local minimizer on $S_c^+$. Furthermore, we have $\displaystyle\inf_{S_c^+}E = -\infty$ under our settings. Hence, we can construct a M-P structure on $S_c^+$. More precisely, we can define
\begin{align*}
m(c) := \inf_{\gamma \in \Gamma}\sup_{t \in [0,1]}E(\gamma(t)) > \max\{E(u_c),E(v)\},
\end{align*}
where
\begin{align*}
\Gamma := \{\gamma \in C([0,1],S_c^+): \gamma(0) = u_c, \gamma(1) = v\}.
\end{align*}
See Lemma \ref{M-P geometry} below for the existence of such a $v \in S_c^+\backslash\mathcal{G}$. We aim to obtain the second positive normalized solution which is of M-P type at the level $m(c)$. To do this, we face two main challenges: (1) \textit{establishing a bounded (PS) sequence}, (2) \textit{losing the compactness}. When searching for normalized solutions in $L^2$-supercritical cases, the (PS) condition for $E$ does not hold true in general (see a counter-example in \cite{BV}). Further, since $\Omega$ is not scaling invariant, methods to obtain a bounded (PS) sequence on entire space fail to work. This is one of the main reasons why methods of proving the existence of the normalized solutions on bounded domains are so scarce. To overcome the difficulty to establish a bounded (PS) sequence, we use some new ideas combining the monotonicity trick and the Pohozaev identity, see more details in Section \ref{boundedness} of this article. When addressing the Sobolev subcritical cases, we have an alternative method using the monotonicity trick and the blow-up analyses, which was used to $L^2$-supercritical NLS equations on compact metric graphs in \cite{CJS}. Compared with the method in \cite{CJS}, our one is easier, without the delicate blow-up analyses. Additionally, we don't introduce discussions on Morse index which is necessary in \cite{CJS}, this allows us to extend our arguments to the nonlinearity $f \in C(\mathbb{R},\mathbb{R})$ ($E \in C^1$ but $E \notin C^2$). Furthermore, our framework can be extended to fractional nonlinear Schr\"{o}dinger equations or nonlinear Schr\"{o}dinger systems (see \cite{Song6}) on bounded domains. It seems not easy to apply blow-up analyses in these two cases. Finally, our framework is applicable to the Sobolev critical case. 

\vskip0.1in
To overcome the difficulty of losing compactness, we estimate the value of the M-P level. Compared with the classical Br\'ezis-Nirenberg problem (with $\lambda$ fixed), we construct a path under the $L^2$ constraint, that, combined with the existence of a normalize ground state at an energy level below the mountain pass level, makes things more complex. Similar difficulties were encountered in \cite{JL,WW} for a Sobolev critical Schr\"{o}dinger equation on the entire space. Since $\Omega$ is not scaling invariant, difficulties become harder to overcome and we need to develop some new techniques.

\begin{theorem}
	\label{thmB.5} Let $\Omega$ be smooth, bounded, and star-shaped with respect to the origin, and let $a, c, p$ satisfy one of the conditions \eqref{a = 0}-\eqref{a < 0}, and we further assume that one of the following holds:
	\begin{itemize}
		\item[(1)] $a = 0, N \geq 3$;
		\item[(2)] $a < 0, N \in \{3,4,5\}, p < 2^\ast - 1$;
		\item[(3)] $a > 0, N \in \{3,4,5\}$.
	\end{itemize}
	Then the Eq \eqref{eq1.1} has the second positive solution $\widetilde{u}_c \neq u_c$ at the level $m(c)$ (i.e., $E(\tilde{u}_c) = m(c)$), and $\widetilde{u}_c$ is of M-P type. Further, let $\tilde{\lambda}_c$ be the Lagrange multiplier of $\tilde{u}_c$, then $\tilde{\lambda}_c > 0$ when $a \leq 0$ and $\tilde{\lambda}_c < \lambda_1(\Omega)$ when $a \geq 0$.
\end{theorem}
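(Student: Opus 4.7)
The plan is a four-step strategy following the standard critical-point scheme for normalized solutions, tailored to the bounded star-shaped setting. First, for the mountain-pass geometry: once Lemma \ref{M-P geometry} produces $v\in S_c^+\setminus\mathcal G$ with $E(v)<E(u_c)$, the class $\Gamma$ is non-empty and $m(c)$ is well defined. Since $u_c$ realizes $\inf_{\mathcal G}E$ on the open neighborhood $\mathcal G$ of $u_c$ in $S_c^+$, every path from $u_c$ to $v$ must cross $\partial\mathcal G$, which forces $m(c)>E(u_c)\ge E(v)$; in particular the MP level is strictly separated from the local minimum.

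Next I would build a bounded Palais--Smale sequence via Jeanjean's monotonicity trick. Introduce the family
\[
E_\theta(u)=\tfrac12\int_\Omega|\nabla u|^2\,dx-\theta\Big(\tfrac{1}{2^\ast}\int_\Omega|u^+|^{2^\ast}dx+\tfrac{a}{p}\int_\Omega|u^+|^p\,dx\Big),\qquad \theta\in[\tfrac12,1],
\]
preserve the MP geometry uniformly in $\theta$, and use that $\theta\mapsto m(c,\theta)$ is non-increasing and hence differentiable at almost every $\theta$. At each such $\theta$ one extracts a PS sequence for $E_\theta$ whose $H_0^1$-norm is controlled by $|m'(c,\theta)|$; Pohozaev tested on the approximate critical points, together with the star-shaped hypothesis (so that $\int_{\partial\Omega}(x\cdot\nu)|\nabla u|^2\,dS\ge 0$ has a favourable sign), yields an asymptotic identity of the form $\int|\nabla u_n^\theta|^2=\theta\int|u_n^{\theta,+}|^{2^\ast}+\theta a\gamma_p\int|u_n^{\theta,+}|^p+o(1)+(\text{nonneg.\ bdry})$. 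This pins down the Lagrange multipliers, and a diagonal passage $\theta_k\uparrow 1$ produces a bounded PS sequence $\{u_n\}\subset S_c^+$ for $E$ at level $m(c)$ with bounded multipliers $\lambda_n$.

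The heart of the argument, and the step I expect to be the main obstacle, is the sharp upper bound
\[
m(c)<E(u_c)+\tfrac{1}{N}\mathcal S^{N/2},
\]
which is the Br\'ezis--Nirenberg threshold below which no Talenti bubble can escape. I would construct a competitor path starting at $u_c$, moving in the direction of a truncated, rescaled Aubin--Talenti bubble $U_{\varepsilon,x_0}$ centred at an interior $x_0\in\Omega$ with $u_c(x_0)>0$, then continuing to $v$ along a fixed background arc. After projecting $u_c+tU_{\varepsilon,x_0}$ onto $S_c^+$ (at a cost of lower order in $\varepsilon$), the energy expansion is of Br\'ezis--Nirenberg type, and the cross term $\int u_c\,U_{\varepsilon,x_0}^{2^\ast-1}$ beats the critical loss exactly when $N\in\{3,4,5\}$: case (1) is the sharp classical computation; in case (2) the positivity of $a$ together with $p>2+4/N$ gives extra room; in case (3) the bound $p<2^\ast-1$ keeps the negative subcritical remainder absorbed by the critical estimate. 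The delicate point, absent from the free-space Br\'ezis--Nirenberg problem, is that the path must live on the constraint $S_c^+$, so the usual scaling invariance is lost and the projection must be implemented carefully alongside the bubble expansion.

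Finally, with a bounded PS sequence in hand and $m(c)$ below the threshold, the sequence converges weakly to some $\widetilde u_c\in H_0^1(\Omega)$ and a profile decomposition rules out the escape of any bubble (each of which would contribute exactly $\tfrac{1}{N}\mathcal S^{N/2}$); strong $H_0^1$-convergence follows and gives $E(\widetilde u_c)=m(c)>E(u_c)$, so $\widetilde u_c\neq u_c$ and $\widetilde u_c$ solves \eqref{eq1.1} with the limit Lagrange multiplier $\widetilde\lambda_c$. Positivity of $\widetilde u_c$ comes from the $u^+$-truncation in $E$ plus the strong maximum principle. The sign statements for $\widetilde\lambda_c$ are obtained by testing the limit equation with $\widetilde u_c$ itself (using $\widetilde u_c\in\overline{\mathcal G}$ to get $\widetilde\lambda_c>0$ when $a\le 0$) and with the positive first Dirichlet eigenfunction $\varphi_1$ (yielding $\widetilde\lambda_c<\lambda_1(\Omega)$ when $a\ge 0$).
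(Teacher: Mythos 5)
Your Steps 1, 3 and 4 follow the paper's actual scheme (mountain-pass geometry via crossing $\partial\mathcal G$ and \eqref{bbou}; bubble-plus-ground-state competitor projected onto $S_c^+$; Br\'ezis--Lieb splitting below the threshold), but Step 2 contains a genuine gap. The Pohozaev identity \eqref{pohozaev} is an identity for \emph{exact} solutions, obtained from elliptic regularity up to $\partial\Omega$; it cannot be ``tested on the approximate critical points.'' A Palais--Smale sequence carries no control whatsoever on the boundary term $\int_{\partial\Omega}|\nabla u|^2\,\sigma\cdot n\,d\sigma$, and no asymptotic Pohozaev identity holds for it, so your route to pinning down multipliers and norms collapses at that point. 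The diagonal passage $\theta_k\uparrow 1$ also fails as described: the monotonicity trick bounds the PS sequence at parameter $\theta$ in terms of $|m'(\theta)|$, which exists only for a.e.\ $\theta$ and may blow up as $\theta\to1^-$, so there is no uniform $H_0^1$ bound along the diagonal; and without such a bound one cannot even convert $(E_{\theta_k}|_{S_c^+})'(u_k)\to 0$ into $(E|_{S_c^+})'(u_k)\to 0$, since the discrepancy is $(1-\theta_k)$ times quantities controlled only by the norms. The paper's resolution (Proposition \ref{Positive solutions for almost every theta}) is to first pass to the limit in the bounded PS sequence at each good $\theta$ and produce an \emph{exact} positive solution $u_\theta$ at level $m_\theta$; this intermediate compactness step itself requires the sub-threshold estimate in the $\theta$-dependent form $m_\theta<\nu_{c,\theta}+\frac1N\mathcal S^{N/2}\theta^{(2-N)/2}$, hence the continuity statements \eqref{theta3} and Lemma \ref{continuity}, none of which appear in your outline. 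Only for these exact solutions does star-shapedness plus Pohozaev yield $u_\theta\in\mathcal G_\theta$, whence the coercivity inequalities \eqref{v1}--\eqref{v2} together with $m_{\theta_n}\le 2m_1$ give the uniform bound along $\theta_n\to1^-$; the solutions $u_{\theta_n}$ themselves then form the bounded PS sequence for $E$.

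Two further flaws, fixable but real. First, your single homotopy with $\theta$ multiplying the combined nonlinearity violates the hypothesis $B\ge 0$ of Theorem \ref{monotonicity trick} when $a<0$: on $S_c^+$ the quantity $\frac{1}{2^\ast}\int_\Omega|u^+|^{2^\ast}dx+\frac ap\int_\Omega|u^+|^p\,dx$ can be negative (near-constant functions with small $c$ show this, since $p<2^\ast$), which is why the paper keeps the $p$-term inside $A$ when $a\le 0$. Second, in the level estimate you have permuted the hypotheses and misattributed the mechanism: case (2) is $a<0$ with $p<2^\ast-1$ (used for the expansion \eqref{5}), case (3) is $a>0$ with no extra bound on $p$, and for $a=0$ there is \emph{no} dimension restriction at all, so the cross term does not win ``exactly when $N\in\{3,4,5\}$.'' The restriction $N\le 5$ enters only through the projection error $1-\mu^{p(\gamma_p-1)}$ in \eqref{7}, where $\int_\Omega u_cv_\epsilon\,dx\sim\epsilon^{(N-2)/2}$ must dominate $\int_\Omega v_\epsilon^2\,dx$, which fails for $N\ge 6$; and the sign of the $\epsilon^{(N-2)/2}$ coefficient is arranged not by dimension counting but by the free cut-off radius $R$, since the harmful linear terms scale like $R^2\epsilon^{(N-2)/2}$ by Lemma \ref{lem es4} while the good term $s^{2^\ast-1}\int_\Omega u_cv_\epsilon^{2^\ast-1}dx$ has an $R$-independent lower bound.
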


\begin{remark}
	When $a = 0$, we have $\lambda_c, \tilde{\lambda}_c \in (0,\lambda_1(\Omega))$ where $\lambda_c, \tilde{\lambda}_c$ are given by Theorems \ref{thmB.2} and \ref{thmB.5} respectively. Further, when $a = 0$, $N = 3$, $\Omega$ is the unit ball, using the classical results of Br\'{e}zis-Nirenberg problem one gets $\lambda_c, \tilde{\lambda}_c \in \left( 1/4\lambda_1(\Omega),\lambda_1(\Omega)\right) $.
\end{remark}

\begin{remark}
	When proving Theorem \ref{thmB.5}, it is only in Proposition \ref{Estimate of the M-P level} below that appears the need to restrict ourselves to $N \in \{3,4,5\}, p < 2^\ast - 1$ when $a < 0$ and to $N \in \{3,4,5\}$ when $a > 0$ in Theorem \ref{thmB.5}. It is not clear to us if
	this limitation is due to the approach we have developed or if the case $N \geq 6$ is fundamentally distinct from the case $N \in \{3,4,5\}$. However, since there is no restriction on $N$ when $a = 0$, we believe that \eqref{estmplev} can be proved for $a \neq 0$ with $|a|$ small when $N \geq 6$ by using a perturbation method. For this reason, we think that the case $N \geq 6$ is not fundamentally distinct from the case $N \in \{3,4,5\}$. We believe it would be interesting to investigate in that direction.
\end{remark}

\vskip0.1in
\noindent\textbf{Open problems and overview of the paper}
\vskip0.1in

The uniqueness of the normalized ground state solution are largely open. For the normalized ground state on the entire space, many papers addressed the existence, non-existence, or multiplicity, while the literature remained quite silent regarding the uniqueness problem. The main difficulty of the establishment of this important property is that different Lagrange multipliers may give the same mass $c$ and energy $E$. In a recent work \cite{HPS}, the first author with his co-authors made some important progress on this topic. Let $(c_{\ast},c^{\ast})$ be the range of $c$ such that the normalized ground state exists. Under a setting that the normalized ground states are global minimizers on $S_c$, it was shown in \cite{HPS} that they are finite for any $c \in (c_{\ast},c^{\ast})$, and further, there exist finite values $\{c_j\}_{j = 0}^K \subset [c_{\ast},c^{\ast}]$ with $c_{\ast} = c_0 < \cdots < c_K = c^{\ast}$ such that for any $j \in \{0,1,\cdots, K-1\}$, the set of normalized ground states $\{u \in S_c: E(u) = \nu_c, c \in (c_j,c_j + 1)\}$ consists of $L_j$ $C ^1$ branches except at most for a finite number of isolate elements when the nonlinearity is real-analytic. We remark that $L_j = 1$ and the normalized ground state at $c \in (c_{\ast},c^{\ast})$ is unique if there is at most one ground state for any multiplier $\lambda$, where we say $u$ is a ground state if $J_\lambda(u) = \inf\limits_{\mathcal{N}_\lambda}J_\lambda$ where $J_\lambda(u) := E(u) - \lambda/2\int_{\Omega}|u|^2dx$ and
$$
\mathcal{N}_\lambda := \left\{u \in H_0^1(\Omega): \int_{\Omega}|\nabla u|^2dx = \lambda \int_{\Omega}|u|^2dx + \int_{\Omega}|u|^{2^\ast}dx + a\int_{\Omega}|u|^pdx\right\}.
$$
In this paper, the normalized ground state is a local minimizer but not a global one. It is natural to ask whether we can obtain similar results to \cite{HPS}. However, when using the framework developed in \cite{HPS}, we will encounter some fundamental difficulties. Let us give some precise explanations below.

\vskip0.1in
\noindent \textbf{Question 1:} Let $u_c$ be the normalized ground state with Lagrange multiplier $\lambda_c$. Is $u_c$ a ground state solution on $\mathcal{N}_{\lambda_c}$? Further, if $v$ is a ground state solution on $\mathcal{N}_{\lambda_c}$, is it a normalized ground state with mass $c$?

Question 1 has its own interest and was firstly touched in \cite{DST}, where authors gave positive answers by using a Legendre-Fenchel type identity under the $L^2$-subcritical setting such that the normalized ground state is a global minimizer. However, arguments fail to work here since $u_c$ is merely a local minimizer. We believe the answer to Question 1 is positive and one needs new techniques and discussions to solve it.

\vskip0.1in
\noindent \textbf{Question 2:} How to show the non-degeneracy of $u_c$, the normalized ground state with Lagrange multiplier $\lambda_c$?

In \cite{HPS} we solve this problem combining that $u_c$ is radial and decreasing w.r.t. $|x|$ on $\mathbb{R}^N$ and the fact that $u_c$ is a global minimizer on $S_c$ (here a local minimizer is enough). However, since $\Omega$ is a general star-shaped bounded domain in this paper, is seems extremely hard to give an answer to Question 2.

\vskip0.1in
Another open problem is the orbital stability of the standing wave corresponding to $u_c$. A weak version of this problem is the orbital stability of $\mathcal{M}_c$ where
$$
\mathcal{M}_c := \left\{u \in H_0^1(\Omega, \mathbb{C}): E(u) = \nu_c\right\}.
$$
We conjecture that $\mathcal{M}_c$ is orbitally stable. However, since there is no local well-posedness result on critical NLS on bounded domains, this problem remains open. A powerful tool to establish the local well-poseness result is Strichartz estimate. However, the boundary of $\Omega$ brings a loss of derivatives in such type estimate (see an estimate with a loss of $1/p$ in derivatives in \cite{BSS}, which is the best result we know in literatures in this direction). The loss of derivatives restricts the use of Strichartz estimate to critical power $2^\ast$.

\vskip0.1in
The final question is the (strong) orbital instability of the standing wave corresponding to $\tilde{u}_c$, the second solution of M-P type. We conjecture that finite time blow up appears and the proof is left open.

\vskip0.1in
The paper is organized as follows. In Section \ref{secprop} we will give some preliminary results. In Section \ref{local minimizer} we will provide the proof for Theorem \ref{thmB.2}. Next, we will obtain a bounded (PS) sequence at the M-P energy level in Section \ref{boundedness} and then complete the proof of Theorem \ref{thmB.5} in Section \ref{estimate}.

\section{Preliminary} \label{secprop}

Our framework is based on the following important observations, that, to the best of our knowledge, have not appeared in previous literatures:

\vskip0.1in
Firstly, any critical point $u$ of $E|_{S_c^+}$ satisfies the following Pohozaev identity (see \cite[Theorem B.1]{Wi}):
\begin{align} \label{pohozaev}
	\int_{\Omega}|\nabla u|^2dx - \frac{1}{2}\int_{\partial\Omega}|\nabla u|^2\sigma\cdot nd\sigma
	= \int_{\Omega}|u^+|^{2^\ast} dx + a\gamma_{p}\int_{\Omega}|u^+|^{p} dx.
\end{align}
Note that $\sigma\cdot n > 0$ since $\Omega$ is star-shaped with respect to the origin. Hence, $u$ belongs to $\mathcal{G}$ where
\begin{align*}
	\mathcal{G} := \left\{u \in S_c^+: \int_{\Omega}|\nabla u|^2dx > \int_{\Omega}|u^+|^{2^\ast} dx + a\gamma_{p}\int_{\Omega}|u^+|^{p} dx \right\}.
\end{align*}
For any $u \in \mathcal{G}$, we have some useful inequalities:
\begin{align} \label{use1}
	E(u) > \frac1N \int_{\Omega}|\nabla u|^2dx & + a\left( \frac{\gamma_p}{2^\ast}-\frac{1}{p}\right) \int_{\Omega}|u^+|^{p} dx \nonumber \\
	& ~~~~~~~~~~ \geq \frac1N \int_{\Omega}|\nabla u|^2dx \quad \text{if } a \leq 0;
\end{align}
\begin{align} \label{use2}
	E(u) > \left( \frac12 - \frac{1}{p\gamma_p}\right)  \int_{\Omega}|\nabla u|^2dx \quad \text{if } a > 0 \text{ and } 2 + 4/N < p < 2^\ast.
\end{align}
These inequalities show that $E$ is bounded from below on $\mathcal{G}$. They also imply the boundedness of any sequence contained in $\mathcal{G}$ with bounded energy.
\begin{remark}
	When $a > 0$ and $p = 2 + 4/N$, we have
	\begin{align} \label{use5}
		E(u) & > \frac1N \int_{\Omega}|\nabla u|^2dx - a\left( \frac1p - \frac{\gamma_p}{2^\ast}\right)\int_{\Omega}|u^+|^pdx \nonumber \\
		& \geq \left[\frac1N - a\left( \frac1p - \frac{\gamma_p}{2^\ast}\right)\mathcal{C}_p^pc^{2/N}\right]\int_{\Omega}|\nabla u|^2dx.
	\end{align}
	When $a > 0$ and $2 < p < 2 + 4/N$, we have
	\begin{align} \label{use6}
		E(u) & > \frac1N \int_{\Omega}|\nabla u|^2dx - a\left( \frac1p - \frac{\gamma_p}{2^\ast}\right)\int_{\Omega}|u^+|^pdx \nonumber \\
		& \geq \frac1N \int_{\Omega}|\nabla u|^2dx - a\left( \frac1p - \frac{\gamma_p}{2^\ast}\right)\mathcal{C}_p^pc^{p(1-\gamma_p)/2}\left( \int_{\Omega}|\nabla u|^2dx\right) ^{p\gamma_p/2}.
	\end{align}
	\eqref{use5} and \eqref{use6} are useful when we address the case that $a > 0, 2 < p \leq 2 + 4/N$.
\end{remark}

\vskip0.1in
Secondly, note that $\sqrt{c}u \in \mathcal{G}$ for $u \in S_1^+$ with $c$ small enough. Hence, $\mathcal{G}$ is not empty for small $c$. Further, note that
\begin{align*}
	\partial \mathcal{G} := \left\{u \in S_c^+: \int_{\Omega}|\nabla u|^2dx = \int_{\Omega}|u^+|^{2^\ast} dx + a\gamma_{p}\int_{\Omega}|u^+|^{p} dx \right\}.
\end{align*}
Using the Gagliardo-Sobolev inequality, we have for $u \in \partial \mathcal{G}$,
\begin{align} \label{use3}
	\int_{\Omega}|\nabla u|^2dx & = \int_{\Omega}|u^+|^{2^\ast} dx + a\gamma_{p}\int_{\Omega}|u^+|^{p} dx \nonumber \\
	& \leq \int_{\Omega}|u^+|^{2^\ast} dx \nonumber \\
	& \leq \mathcal{S}^{-2^\ast/2}\left( \int_{\Omega}|\nabla u|^2dx\right)^{2^\ast/2} \quad \text{if } a \leq 0;
\end{align}
\begin{align} \label{use4}
	\int_{\Omega}|\nabla u|^2dx & = \int_{\Omega}|u^+|^{2^\ast} dx + a\gamma_{p}\int_{\Omega}|u^+|^{p} dx \nonumber \\
	& \leq \mathcal{S}^{-2^\ast/2}\left( \int_{\Omega}|\nabla u|^2dx\right)^{2^\ast/2} \nonumber \\
	& ~~~~~~ + a\gamma_{p}\mathcal{C}_p^pc^{p(1-\gamma_p)/2}\left( \int_{\Omega}|\nabla u|^2dx\right)^{p\gamma_p/2} \nonumber \\
	& ~~~~~~~~~~~~~~~~ \quad \text{if } a > 0 \text{ and } 2 + 4/N < p < 2^\ast.
\end{align}
Hence, $\int_{\Omega}|\nabla u|^2dx$ has a positive lower bound (which depends of $c$ when $a > 0$) on $\partial \mathcal{G}$.

\vskip0.1in
Thirdly, we set $u^t(x) := t^{N/2}u(tx)$. Note that $u^t \in S_c^+$ for any $u \in S_c^+$ and $t \geq 1$. (We can not allow $t < 1$ since $\Omega$ is a bounded domain.) Direct calculations can show that there exists unique $t_u > 1$ such that $u^{t_u} \in \partial \mathcal{G}$ for any $u \in \mathcal{G}$.

\vskip0.1in
In summary, we have the following result.

\begin{proposition} \label{proppre}
	Let $\Omega$ be smooth, bounded, and star-shaped with respect to the origin. If $u$ is a critical point of $E|_{S_c^+}$, then $u \in \mathcal{G}$.
	Further, we assume that $a > 0, 2 + 4/N < p < 2^\ast$, or $a \leq 0, 2 < p < 2^\ast$.
	\begin{itemize}
		\item[(1)] Any sequence $\{u_n\} \subset \mathcal{G}$ satisfying $\displaystyle \limsup_{n \to \infty}E(u_n) < \infty$ is bounded in $H_0^1(\Omega)$.
		\item[(2)] Let one of \eqref{a = 0}-\eqref{a < 0} hold true, then $\mathcal{G} \neq \emptyset$ and it holds
		\begin{align} \label{bbou}
			0 < \inf_{u \in \mathcal{G}}E(u) < \inf_{u \in \partial \mathcal{G}}E(u).
		\end{align}
		\item[(3)] If $u \in \mathcal{G}$, there exists a unique $t_u > 1$ such that $u^{t_u} \in \partial \mathcal{G}$.
	\end{itemize}
\end{proposition}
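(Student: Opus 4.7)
The three claims are handled in order, each built from tools already assembled. For the first, the Pohozaev identity \eqref{pohozaev} does everything: regularity theory upgrades any critical point of $E|_{S_c^+}$ to a smooth nonnegative function, Hopf's lemma then forces $|\nabla u|>0$ on $\partial\Omega$, and combined with $\sigma\cdot n>0$ this makes the boundary term in \eqref{pohozaev} strictly positive. Dropping it yields exactly the defining inequality of $\mathcal{G}$. Part~(1) is immediate from \eqref{use1}--\eqref{use2}: each bounds $\int|\nabla u_n|^2$ linearly by $E(u_n)$, and Poincar\'e controls the $L^2$ norm, giving boundedness in $H_0^1(\Omega)$.

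For Part~(2), I would first establish non-emptiness by testing with the rescaled family $\sqrt{c}\,u$, $u\in S_1^+$. The condition $\sqrt{c}\,u\in\mathcal{G}$ translates to
\[\int|\nabla u|^2 > c^{(2^\ast-2)/2}\int|u^+|^{2^\ast} + a\gamma_p c^{(p-2)/2}\int|u^+|^p,\]
which, after splitting $\int|\nabla u|^2=\tau+(1-\tau)\int|\nabla u|^2$ (when $a>0$) or handling each term separately (when $a\leq 0$), reproduces precisely the first minimum in \eqref{a = 0}--\eqref{a < 0}. The strict positivity $\inf_\mathcal{G}E>0$ then follows from \eqref{use1}--\eqref{use2} together with $\int|\nabla u|^2\geq\lambda_1(\Omega)\,c$.

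The strict gap $\inf_\mathcal{G}E<\inf_{\partial\mathcal{G}}E$ is the heart of Part~(2). Substituting the equality form of \eqref{pohozaev} into $E$ shows that for $u\in\partial\mathcal{G}$,
\[E(u)=\frac{1}{N}\int|\nabla u|^2-a\Bigl(\frac{1}{p}-\frac{\gamma_p}{2^\ast}\Bigr)\int|u^+|^p;\]
when $a\leq 0$ both contributions are nonnegative and \eqref{use3} forces $\int|\nabla u|^2\geq\mathcal{S}^{N/2}$, whence $\inf_{\partial\mathcal{G}}E\geq\mathcal{S}^{N/2}/N$; when $a>0$ I would combine \eqref{use2} with the Gagliardo--Sobolev lower bound extracted from \eqref{use4}, split by $\xi\in(0,1)$ exactly as in Theorem~\ref{thmB.2}, to produce the analogous bound. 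Against this the test-function estimate $E(\sqrt{c}\,u)\leq c\,\Lambda_{2,u}$ (with an extra $\frac{|a|c^{p/2}}{p}\int|u^+|^p$ term when $a<0$) is compared, and the second minima in \eqref{a = 0}--\eqref{a < 0} are arranged exactly to force strict inequality.

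Part~(3) reduces to analyzing $\phi(t):=t^2A-t^{2^\ast}B-a\gamma_p t^{p\gamma_p}C$, where $A,B,C$ are the three relevant integrals of $u$; since $\phi(1)>0$ and $\phi(t)\to-\infty$, a root $t_u>1$ exists. For uniqueness I would pass to $\psi(t):=\phi(t)/t^2$: when $a\geq 0$ (or $a<0$ with $p\gamma_p\leq 2$) the function $\psi$ is strictly decreasing, giving a unique zero; when $a<0$ and $p\gamma_p>2$ the equation $\psi'(t)=0$ has exactly one solution $t_\ast$, so $\psi$ is unimodal and its unique positive root necessarily sits past $t_\ast$. The main obstacle lies in Part~(2): one must carefully match the nested optima in \eqref{a = 0}--\eqref{a < 0} to the elementary inequalities above, and in the $a>0$ case in particular the Gagliardo--Sobolev lower bound forces the auxiliary parameter $\xi$ to separate the critical and subcritical contributions, making the bookkeeping the delicate part of the argument.
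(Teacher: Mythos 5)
Your proposal reproduces the paper's own proof essentially step by step: the Pohozaev identity \eqref{pohozaev} plus star-shapedness for membership in $\mathcal{G}$ (your Hopf-lemma justification of the strict positivity of the boundary term is a detail the paper leaves implicit), the coercivity inequalities \eqref{use1}--\eqref{use2} for part (1), the $\partial\mathcal{G}$ lower bounds via the equality form of the constraint combined with \eqref{use3}--\eqref{use4} and the $\xi$-split, measured against the test estimate for $E(\sqrt{c}\,u)$, for part (2), and the fibering analysis of $t \mapsto E(u^t)$ for part (3), where your unimodality argument for $\psi(t)=\phi(t)/t^2$ (using $\psi(0^+)=\int_\Omega|\nabla u|^2dx>0$ and $2^\ast>\max\{2,p\gamma_p\}$) simply makes explicit the paper's ``direct calculations.'' The only deviation is cosmetic bookkeeping---for instance, in \eqref{a < 0} it is the second entry $\Lambda_{1,u}^{(N-2)/2}$ that yields $\sqrt{c}\,u\in\mathcal{G}$ while the $\tau$-maximum governs the energy comparison against $\frac1N\mathcal{S}^{N/2}$, not the other way around---and this does not affect correctness.
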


\begin{remark}
	$\mathcal{G}$ is not empty for all $c > 0$. Indeed, we can take $u \in S_1^+$ with $\int_{\Omega}|\nabla u^-|^2dx$ large (dependent on $c$) by replacing $u^-$ with $ku^-$ ($ku^- + u^+ \in S_1^+$ for all $k > 0$) such that
	\begin{align*}
		\int_{\Omega}|\nabla \sqrt{c}u|^2dx  & = c\int_{\Omega}|\nabla u^-|^2dx + c\int_{\Omega}|\nabla u^+|^2dx \\
		& \geq c^{2^\ast/2}\int_{\Omega}|u^+|^{2^\ast} dx + a\gamma_{p}c^{p/2}\int_{\Omega}|\sqrt{c}u^+|^{p} dx \\
		& = \int_{\Omega}|\sqrt{c}u^+|^{2^\ast} dx + a\gamma_{p}\int_{\Omega}|\sqrt{c}u^+|^{p} dx.
	\end{align*}
	We also remark that the conditions on $c$ can not be removed when proving \eqref{bbou}, and are essential to find normalized ground states. Once \eqref{bbou} holds true for all $c > 0$, we can establish the existence of the normalized ground state. However, in some specific Sobolev subcritical cases, positive/negative normalized solutions do not exist for large $L^2$ masses, see \cite{Song2}. Hence, it is natural to conjecture that \eqref{eq1.1} does not have normalized ground states (which are always positive/negative) when $c$ is large enough, and that our results cannot be extended to the cases without any condition on $c$.
\end{remark}

\begin{proof}[Proof to Proposition \ref{proppre}]
	Using the Pohozaev identity \eqref{pohozaev} and that $\Omega$ is star-shaped (with respect to the origin), we have $u \in \mathcal{G}$ for any $u$ with $(E|_{S_c^+})'(u) = 0$. Next we prove (1)-(3) in Proposition \ref{proppre}.
	
	\vskip0.1in
	\noindent\textbf{Proof to (1):} Let $\{u_n\} \subset \mathcal{G}$ satisfy $\displaystyle \limsup_{n \to \infty}E(u_n) < \infty$.
	When $a \leq 0$, by using \eqref{use1} we get
	\begin{align*}
	\limsup_{n \to \infty}\int_{\Omega}|\nabla u_n|^2dx \leq N\limsup_{n \to \infty}E(u_n) < \infty,
	\end{align*}
	implying that $\{u_n\}$ is bounded in $H_0^1(\Omega)$.
	
	Note that $p\gamma_p > 2$ when $2 + 4/N < p < 2^\ast$. Similarly, when $a > 0$ and $2 + 4/N < p < 2^\ast$, by using \eqref{use2} we get
	\begin{align*}
	\limsup_{n \to \infty}\int_{\Omega}|\nabla u_n|^2dx \leq \frac{2p\gamma_p}{p\gamma_p-2}\limsup_{n \to \infty}E(u_n) < \infty,
	\end{align*}
	implying that $\{u_n\}$ is bounded in $H_0^1(\Omega)$.
	
	\vskip0.1in
	\noindent\textbf{Proof to (2):} When $a = 0$, for any $u \in S_1^+$, $c < \Lambda_{1,u}^{(N-2)/2}$ implies that
	\begin{align*}
	\int_{\Omega}|\nabla (\sqrt{c}u)|^2dx & = c\int_{\Omega}|\nabla u|^2dx \\
	& > c^{2^\ast/2}\int_{\Omega}|u^+|^{2^\ast}dx = \int_{\Omega}|(\sqrt{c}u)^+|^{2^\ast}dx.
	\end{align*}
	Hence, $\sqrt{c}u \in \mathcal{G}$, yielding that $\mathcal{G}$ is not empty. Further, using \eqref{use3} one gets
	\begin{align*}
	\inf_{v \in \partial\mathcal{G}}\int_{\Omega}|\nabla v|^2dx \geq \mathcal{S}^{N/2}.
	\end{align*}
	Then using \eqref{use1} we have
	\begin{align*}
	\inf_{v \in \partial\mathcal{G}}E(v) \geq \frac1N\int_{\Omega}|\nabla v|^2dx \geq \frac1N\mathcal{S}^{N/2}.
	\end{align*}
	For any $u \in S_1^+$, $c < 1/N\mathcal{S}^{N/2}\Lambda_{2,u}^{-1}$ implies that
	\begin{align*}
	E(\sqrt{c}u) < \frac c2\int_{\Omega}|\nabla u|^2dx < \frac1N\mathcal{S}^{N/2} \leq \inf_{v \in \partial\mathcal{G}}E(v).
	\end{align*}
	When \eqref{a = 0} holds, we can take some $u \in S_1^+$ such that $\sqrt{c}u \in \mathcal{G}$ and
	\begin{align*}
	\inf_{v \in \mathcal{G}}E(v) \leq E(\sqrt{c}u) < \inf_{v \in \partial\mathcal{G}}E(v).
	\end{align*}
	Moreover, using \eqref{use1} again we have
	\begin{align*}
	\inf_{v \in \mathcal{G}}E(v) \geq \frac1N\inf_{v \in \mathcal{G}}\int_{\Omega}|\nabla v|^2dx \geq \frac1N\lambda_1c > 0.
	\end{align*}
	Hence, \eqref{bbou} holds true.
	
	When $a > 0$ and $2 + 4/N < p < 2^\ast$, for any $u \in S_1^+$ and some $\tau \in (0,1)$, combining $c < \left(\tau\Lambda_{1,u}\right)^{(N-2)/2}$ and $c < \left((1-\tau)\Lambda_{3,u} \right)^{2/(p-2)}$ implies that
	\begin{align*}
	\int_{\Omega}|\nabla (\sqrt{c}u)|^2dx & = c\int_{\Omega}|\nabla u|^2dx \\
	& = c\tau\int_{\Omega}|\nabla u|^2dx + c(1-\tau)\int_{\Omega}|\nabla u|^2dx \\
	& > c^{2^\ast/2}\int_{\Omega}|u^+|^{2^\ast}dx + a\gamma_pc^{p/2}\int_{\Omega}|u^+|^{p}dx \\
	& = \int_{\Omega}|(\sqrt{c}u)^+|^{2^\ast}dx + a\gamma_p\int_{\Omega}|(\sqrt{c}u)^+|^{p}dx.
	\end{align*}
	Hence, $\sqrt{c}u \in \mathcal{G}$, yielding that $\mathcal{G}$ is not empty. Further, using \eqref{use4} one gets for any $\xi \in (0,1)$,
	\begin{align*}
	\inf_{v \in \partial\mathcal{G}}\int_{\Omega}|\nabla v|^2dx \geq \xi^{(N-2)/2}\mathcal{S}^{N/2},
	\end{align*}
	or
	\begin{align*}
	\inf_{v \in \partial\mathcal{G}}\int_{\Omega}|\nabla v|^2dx \geq \left( \frac{1-\xi}{a\gamma_p\mathcal{C}_p^pc^{p(1-\gamma_p)/2}}\right) ^{2/(p\gamma_p-2)}.
	\end{align*}
	Then using \eqref{use2} we have for any $\xi \in (0,1)$,
	\begin{align*}
	\inf_{v \in \partial\mathcal{G}}E(v) \geq \left( \frac12 - \frac{1}{p\gamma_p}\right)\min\left\{\xi^{(N-2)/2}\mathcal{S}^{N/2}, \left( \frac{1-\xi}{a\gamma_p\mathcal{C}_p^pc^{p(1-\gamma_p)/2}}\right) ^{2/(p\gamma_p-2)}\right\}.
	\end{align*}
	For any $u \in S_1^+$, from
	\begin{align*}
	c & < \max_{\xi \in (0,1)}\bigg\{\min\bigg\{\left( \frac12 - \frac{1}{p\gamma_p}\right)\xi^{(N-2)/2}\mathcal{S}^{N/2}\Lambda_{2,u}^{-1}, \nonumber \\
	& ~~~~~~~~~~~~~ \left( \frac{1-\xi}{a\gamma_p\mathcal{C}_p^p}\right)^{2/(p-2)} \left(\left( \frac12 - \frac{1}{p\gamma_p}\right)\Lambda_{2,u}^{-1}\right)^{(p\gamma_p-2)/(p-2)} \bigg\}\bigg\},
	\end{align*}
	we know that there exists some $\xi_0 \in (0,1)$ such that
	\begin{align*}
	E(\sqrt{c}u) & < \frac c2\int_{\Omega}|\nabla u|^2dx \\
	& < \left( \frac12 - \frac{1}{p\gamma_p}\right)\min\left\{\xi_0^{(N-2)/2}\mathcal{S}^{N/2}, \left( \frac{1-\xi_0}{a\gamma_p\mathcal{C}_p^pc^{p(1-\gamma_p)/2}}\right) ^{2/(p\gamma_p-2)}\right\} \\
	& \leq \inf_{v \in \partial\mathcal{G}}E(v).
	\end{align*}
	When \eqref{a > 0} holds, we can take some $u \in S_1^+$ such that $\sqrt{c}u \in \mathcal{G}$ and
	\begin{align*}
	\inf_{v \in \mathcal{G}}E(v) \leq E(\sqrt{c}u) < \inf_{v \in \partial\mathcal{G}}E(v).
	\end{align*}
	Moreover, using \eqref{use2} again we have
	\begin{align*}
	\inf_{v \in \mathcal{G}}E(v) \geq \left( \frac12 - \frac{1}{p\gamma_p}\right)\inf_{v \in \mathcal{G}}\int_{\Omega}|\nabla v|^2dx \geq \left( \frac12 - \frac{1}{p\gamma_p}\right)\lambda_1c > 0.
	\end{align*}
	Hence, \eqref{bbou} holds true.
	
	When $a < 0$, for any $u \in S_1^+$, $c < \Lambda_{1,u}^{(N-2)/2}$ implies that
	\begin{align*}
	\int_{\Omega}|\nabla (\sqrt{c}u)|^2dx & = c\int_{\Omega}|\nabla u|^2dx \\
	& > c^{2^\ast/2}\int_{\Omega}|u^+|^{2^\ast}dx \\
	& > \int_{\Omega}|(\sqrt{c}u)^+|^{2^\ast}dx + a\int_{\Omega}|(\sqrt{c}u)^+|^{p}dx.
	\end{align*}
	Hence, $\sqrt{c}u \in \mathcal{G}$, yielding that $\mathcal{G}$ is not empty. Further, using \eqref{use3} one gets
	\begin{align*}
	\inf_{v \in \partial\mathcal{G}}\int_{\Omega}|\nabla v|^2dx \geq \mathcal{S}^{N/2}.
	\end{align*}
	Then using \eqref{use1} we have
	\begin{align*}
	\inf_{v \in \partial\mathcal{G}}E(v) \geq \frac1N\int_{\Omega}|\nabla v|^2dx \geq \frac1N\mathcal{S}^{N/2}.
	\end{align*}
	For any $u \in S_1^+$, from
	\begin{align*}
	c < \max_{\tau \in (0,1)} \left\{\min\left\{\frac{\tau}N\mathcal{S}^{N/2}\Lambda_{2,u}^{-1}, \left(\frac{1-\tau}{N}\mathcal{S}^{N/2}\Lambda_{4,u} \right)^{2/p}\right\}\right\},
	\end{align*}
	we know that there exists some $\tau_0 \in (0,1)$ such that
	\begin{align*}
	E(\sqrt{c}u) & < \frac c2\int_{\Omega}|\nabla u|^2dx + \frac {|a|}pc^{p/2}\int_{\Omega}|u^+|^pdx \\
	& < \frac{\tau_0}N\mathcal{S}^{N/2} + \frac{1-\tau_0}N\mathcal{S}^{N/2} = \frac{1}N\mathcal{S}^{N/2} \leq \inf_{v \in \partial\mathcal{G}}E(v).
	\end{align*}
	When \eqref{a < 0} holds, we can take some $u \in S_1^+$ such that $\sqrt{c}u \in \mathcal{G}$ and
	\begin{align*}
	\inf_{v \in \mathcal{G}}E(v) \leq E(\sqrt{c}u) < \inf_{v \in \partial\mathcal{G}}E(v).
	\end{align*}
	Moreover, using \eqref{use1} again we have
	\begin{align*}
	\inf_{v \in \mathcal{G}}E(v) \geq \frac1N\inf_{v \in \mathcal{G}}\int_{\Omega}|\nabla v|^2dx \geq \frac1N\lambda_1c > 0.
	\end{align*}
	Hence, \eqref{bbou} holds true.
	
	\vskip0.1in
	\noindent\textbf{Proof to (3):} Recall that $u^t(x) := t^{N/2}u(tx), t \geq 1$. For any $u \in S_c^+$, we set
	$$
	\phi(t) := E(u^t) = \frac{t^2}{2}\int_{\Omega}|\nabla u|^2dx - \frac{t^{2^\ast}}{2^\ast}\int_{\Omega}|u^+|^{2^\ast}dx - a\frac{t^{p\gamma_p}}{p}\int_{\Omega}|u^+|^{p}dx.
	$$
	Note that
	\begin{align*}
	\phi'(t) & = t\int_{\Omega}|\nabla u|^2dx - t^{2^\ast-1}\int_{\Omega}|u^+|^{2^\ast}dx - a\gamma_pt^{p\gamma_p-1}\int_{\Omega}|u^+|^{p}dx \\
	& = t^{-1}\left( \int_{\Omega}|\nabla u^t|^2dx - \int_{\Omega}|(u^t)^+|^{2^\ast}dx - a\gamma_p\int_{\Omega}|(u^t)^+|^{p}dx\right) .
	\end{align*}
	Hence, $u^t \in \mathcal{G} \Leftrightarrow \phi'(t) > 0$ and $u^t \in \partial\mathcal{G} \Leftrightarrow \phi'(t) = 0$. Since $2^\ast > \max\{2,p\gamma_p\}$, direct calculations show that for a unique $t_u > 0$, $\phi'(t_u) = 0$, $\phi'(t) > 0$ in $(0,t_u)$ and $\phi'(t) < 0$ in $(t_u,\infty)$. Moreover, $u \in \mathcal{G}$ implies that $\phi'(1) > 0$, and so $t_u > 1$. Further, $\phi'(t_u) = 0$ shows $u^{t_u} \in \partial\mathcal{G}$.
\end{proof}

\section{Proof to Theorem \ref{thmB.2}} \label{local minimizer}

\begin{proof}[Proof to Theorem \ref{thmB.2}]
	The range of $\nu_c$ was given in the proof of Proposition \ref{proppre}. Here we prove it can be achieved in $\mathcal{G}$. Let $\{u_n\} \subset \mathcal{G}$ be a minimizing sequence of $\nu_c$, i.e., $E(u_n) \to \nu_c$ as $n \to \infty$. By Proposition \ref{proppre} (1), $\{u_n\}$ is bounded in $H_0^1(\Omega)$. Using Proposition \ref{proppre} (2), we claim that $\{u_n\}$ is away from $\partial\mathcal{G}$. Suppose on the contrary that there exists $\{w_n\} \subset \partial\mathcal{G}$ such that $u_n - w_n \to 0$ in $H_0^1(\Omega)$ up to subsequences. Since $\{u_n\}$ is bounded, $\{w_n\}$ is also bounded in $H_0^1(\Omega)$. Then we have
	\begin{align*}
	\nu_c = \lim_{n \to \infty}E(u_n) = \lim_{n \to \infty}E(w_n) \geq \inf_{w \in \partial\mathcal{G}}E(w),
	\end{align*}
	contradicting to \eqref{bbou}. Thus the claim holds true. By Ekeland's variational principle, we can assume that $(E|_{S_c^+})'(u_n) = (E|_{\mathcal{G}})'(u_n) \to 0$ as $n \to \infty$. Up to subsequences, we assume that
	\begin{equation*}
		\begin{aligned}
			& u_n \rightharpoonup u_c \quad \text{ weakly in } H_0^1(\Omega), \\
			& u_n \rightharpoonup u_c \quad \text{ weakly in } L^{2^\ast}(\Omega),\\
			& u_n \to u_c \quad \text{ strongly in } L^r(\Omega) \text{ for } 2 \leq r <2^\ast,\\
			& u_n \to u_c \quad \text{ almost everywhere in } \Omega.
		\end{aligned}
	\end{equation*}
	
	On the one hand, it can be verified that $u_c \in S_c^+$ is a critical point of $E$ constrained on $S_c^+$. Then using Proposition \ref{proppre} (1), one gets that $u_c \in \mathcal{G}$ and so $E(u_c) \geq \nu_c$.
	
	On the other hand, let $w_n = u_n - u_c$. Since $(E|_{S_c^+})'(u_n) \rightarrow 0$ as $n \to \infty$, there exists $\lambda_n$ such that $E'(u_n) - \lambda_nu_n^+ \to 0$ as $n \to \infty$. Let $\lambda_c$ be the Lagrange multiplier corresponding to $u_c$. Then for some $\psi \in H_0^1(\Omega)$ with $\int_\Omega u_c^+\psi dx \neq 0$, we obtain
	\begin{align*}
	\lambda_n & ~~~~ = ~~~~ \frac{1}{\int_\Omega u_n^+\psi dx}(\langle E'(u_n),\psi\rangle + o_n(1)) \\
	& \underrightarrow{n \to \infty} \frac{1}{\int_\Omega u_c^+\psi dx}\langle E'(u_c),\psi\rangle = \lambda_c.
	\end{align*}
	Using the Br\'{e}zis-Lieb Lemma (see \cite{BL}) and the facts that
	$$
	E'(u_n) - \lambda_nu_n^+ \to 0, \quad E'(u_c) - \lambda_c u_c^+ = 0,
	$$
	one gets
	\begin{equation*}
	\int_{\Omega}|\nabla w_n|^2dx = \int_{\Omega}|w_n|^{2^\ast}dx + o_n(1).
	\end{equation*}
	Hence we assume that $\int_{\Omega}|\nabla w_n|^2dx \to l \geq 0, \int_{\Omega}|w_n|^{2^\ast}dx \to l \geq 0$. Using Br\'ezis-Lieb Lemma again, we deduce that
	\begin{align*}
	E(u_n) = E(u_c) + E(w_n) + o_n(1).
	\end{align*}
	Note that
	\begin{align*}
	E(w_n) = \frac12 \int_{\Omega}|\nabla w_n|^2dx - \frac1{2^\ast}\int_{\Omega}|w_n|^{2^\ast}dx + o_n(1) = \frac lN + o_n(1),
	\end{align*}
	which implies $E(w_n) \geq o_n(1)$, and so $\nu_c = \lim\limits_{n\to\infty}E(u_n) \geq E(u_c)$. Hence, $E(u_c) = \nu_c$, which in turn shows that $l = 0$ and thus $u_n \to u_c$ strongly in $H_0^1(\Omega)$.
	
	 We have proved that $E|_{S_c^+}$ contains a critical point $u_c$ at level $\nu_{c}$ contained in $\mathcal{G}$. By Lagrange multiplier principle, $u_c$ satisfies
	 $$
	 -\Delta u_c = \lambda_c u_c^+ + |u_c^+|^{2^\ast-2}u_c^+ + a |u_c^+|^{p-2}u_c^+ \quad \text{in } \Omega,
	 $$
	 for some $\lambda_c$. Multiplying $u_c^-$ and integrating on $\Omega$, we obtain $\int_\Omega|\nabla u_c^-|^2dx = 0$, implying that $u_c^- = 0$ and hence $u_c \geq 0$. By strong maximum principle, $u_c > 0$. Thus, $\int_\Omega|u_c|^2dx = \int_\Omega|u_c^+|^2dx = c$ and $u_c$ solves \eqref{eq1.1}. Next, we show $u_c$ is a normalized ground state solution. Since $u_c$ is positive, $E(u_c) = \tilde{E}(u_c)$. Let $v \in S_c$ satisfy $(\tilde{E}|_{S_c})'(v) = 0$. Then $|v| \in S_c^+$. Moreover, $v$ satisfies the following Pohozaev identity (see \cite[Theorem B.1]{Wi}):
	 \begin{align*}
	 \int_{\Omega}|\nabla v|^2dx - \frac{1}{2}\int_{\partial\Omega}|\nabla v|^2\sigma\cdot nd\sigma
	 = \int_{\Omega}|v|^{2^\ast} dx + a\gamma_{p}\int_{\Omega}|v|^{p} dx.
	 \end{align*}
	 The solution $v$ satisfies $\int_{\Omega}|\nabla v|^2dx = \int_{\Omega}|\nabla |v||^2dx$ (the set $\{x\in \Omega: v(x) = 0\}$ has zero Lebesgue measure). Hence, we have $|v| \in \mathcal{G}$, and then
	 $$
	 \tilde{E}(|v|) = E(|v|) \geq \nu_c = E(u_c) = \tilde{E}(u_c).
	 $$
	 Using $\int_{\Omega}|\nabla v|^2dx = \int_{\Omega}|\nabla |v||^2dx$ again we have $\tilde{E}(u_c) \leq \tilde{E}(|v|) = \tilde{E}(v)$. From the arbitrariness of $v$, we know $u_c$ is a normalized ground state solution to \eqref{eq1.1}.
	
	 Finally, we prove that $\lambda_c > 0$ when $a \leq 0$ and $\lambda_c < \lambda_1(\Omega)$ when $a \geq 0$. Multiplying $u_c$ and integrating on $\Omega$ for equation \eqref{eq1.1} of $u_c$, we obtain
	 \begin{align*}
	 \lambda_c c + \int_\Omega|u_c|^{2^\ast}dx + a\int_\Omega|u_c|^{p}dx & = \int_\Omega|\nabla u_c|^{2}dx \\
	 &> \int_\Omega|u_c|^{2^\ast}dx + a\gamma_p\int_\Omega|u_c|^{p}dx,
	 \end{align*}
	 where we use the fact that $u_c \in \mathcal{G}$. Note that $\gamma_p < 1$. Hence, $\lambda_c c > a(\gamma_p-1)\int_\Omega|u_c|^{p}dx$, implying that $\lambda_c > 0$ when $a \leq 0$. Let $e_1$ be the corresponding positive unit eigenfunction of $\lambda_1(\Omega)$. Multiplying $e_1$ and integrating on $\Omega$ for equation \eqref{eq1.1} of $u_c$, we obtain
	 \begin{align*}
	 \lambda_c \int_\Omega u_ce_1dx + \int_\Omega|u_c|^{2^\ast-2}u_ce_1dx + a\int_\Omega|u_c|^{p-2}u_ce_1dx & =\int_\Omega\nabla u_c \nabla e_1dx \\
	 & = \lambda_1(\Omega)\int_\Omega u_ce_1dx,
	 \end{align*}
	 implying that $\lambda_c < \lambda_1(\Omega)$ when $a \geq 0$. The proof is complete.
\end{proof}

\section{The bounded (PS) sequence} \label{boundedness}

From now on, we focus on the existence of the second positive solution. We are devoted to obtain a bounded (PS) sequence in this section, and the result reads as follows.

\begin{proposition} \label{bounded (PS) sequence}
	Let $\Omega$ be smooth, bounded, and star-shaped with respect to the origin, and let $a, c, p$ satisfy one of the conditions \eqref{a = 0}-\eqref{a < 0}. Then, there exists a $H_0^1(\Omega)$-bounded sequence $\{u_n\} \subset S_c^+$ such that $\lim\limits_{n \to \infty}E(u_n) = m(c)$ and that $(E|_{S_c^+})'(u_n) \to 0$ as $n \to \infty$.
\end{proposition}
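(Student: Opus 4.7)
The plan is to combine the monotonicity trick of Jeanjean \cite{Jean2} with the Pohozaev identity \eqref{pohozaev} (using crucially that $\Omega$ is star-shaped) to substitute for the scaling invariance that is absent on a bounded domain. Introduce the perturbed family
$$
E_\theta(u) := \frac{1}{2}\int_\Omega|\nabla u|^2\,dx - \theta F(u), \qquad F(u) := \frac{1}{2^\ast}\int_\Omega|u^+|^{2^\ast}\,dx + \frac{a}{p}\int_\Omega|u^+|^p\,dx,
$$
for $\theta \in [\theta_0, 1]$ with $\theta_0 < 1$ close to $1$. A continuity argument based on the uniform-in-$\theta$ analogues of \eqref{use1}--\eqref{use4} shows that the mountain-pass geometry of Lemma \ref{M-P geometry} persists on the whole interval: the local minimizer $u_c$ and the endpoint $v$ still serve as base points of the same class of paths $\Gamma$, and
$$
m_\theta(c) := \inf_{\gamma\in\Gamma}\sup_{t\in[0,1]}E_\theta(\gamma(t)) > \max\{E_\theta(u_c),\, E_\theta(v)\}.
$$
Since $F(u) > 0$ on $S_c^+$ in cases (1) and (2), $\theta \mapsto m_\theta(c)$ is monotone non-increasing on $[\theta_0,1]$ (case (3), where $F$ may change sign, can be handled by parametrising only the Sobolev critical term by $\theta$).

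At every differentiability point $\theta$ of the monotone function $m_\theta(c)$, Jeanjean's lemma delivers a PS sequence $\{u_n^\theta\}_n \subset S_c^+$ for $E_\theta|_{S_c^+}$ at level $m_\theta(c)$ with $\|u_n^\theta\|_{H_0^1(\Omega)}$ controlled in terms of $|m_\theta'(c)|$; Jeanjean's proof adapts to the bounded domain because the required one-sided scaling $u^t(x) = t^{N/2}u(tx)$ from Proposition \ref{proppre} (3) preserves $S_c^+$ for $t \ge 1$. A Br\'ezis--Lieb analysis, combined with the subthreshold bound $m_\theta(c) \le m_{\theta_0}(c)$ (which can be made smaller than the Br\'ezis--Nirenberg concentration threshold for $E_\theta$ by taking $\theta_0$ close enough to $1$), then allows one to pass to the weak limit and upgrade $\{u_n^\theta\}$ to a genuine critical point $u_\theta \in S_c^+$ of $E_\theta|_{S_c^+}$ at level $m_\theta(c)$. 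Since $u_\theta$ is an honest weak solution, it satisfies the Pohozaev identity \eqref{pohozaev} (with the nonlinear terms rescaled by $\theta$), so the non-negativity of the boundary term coming from star-shapedness combined with inequalities \eqref{use1}--\eqref{use2} yields
$$
\|u_\theta\|_{H_0^1(\Omega)}^2 \le C\,E_\theta(u_\theta) = C\,m_\theta(c) \le C\,m_{\theta_0}(c),
$$
a bound uniform in $\theta \in [\theta_0,1]$ that is \emph{independent} of the Jeanjean constant $|m_\theta'(c)|$.

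Pick a sequence of differentiability points $\theta_k \nearrow 1$ avoiding the countable set of jump points of $m_\theta(c)$, and set $u_k := u_{\theta_k}$. The family $\{u_k\}$ is uniformly $H_0^1$-bounded by the previous step, and since each $u_k$ solves the Euler equation of $E_{\theta_k}|_{S_c^+}$ with Lagrange multiplier $\lambda_k$,
$$
E'(u_k) - \lambda_k u_k^+ = (\theta_k - 1)\bigl(|u_k^+|^{2^\ast-2}u_k^+ + a|u_k^+|^{p-2}u_k^+\bigr) \to 0 \text{ in } H^{-1}(\Omega),
$$
whence $(E|_{S_c^+})'(u_k) \to 0$. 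Likewise $E(u_k) - E_{\theta_k}(u_k) = (\theta_k - 1)F(u_k) = o_k(1)$ by the $H_0^1$-boundedness of $\{u_k\}$; the left-continuity of $m_\theta(c)$ at $\theta = 1$ (easily obtained from $m_\theta(c) \le \sup_\gamma E + (1-\theta)\sup_\gamma F$ along any fixed $\gamma \in \Gamma$) gives $m_{\theta_k}(c) \to m(c)$, hence $E(u_k) \to m(c)$, completing the proof.

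The principal technical obstacle will be the critical-point extraction in the second paragraph: in the presence of the Sobolev critical nonlinearity, the Br\'ezis--Lieb analysis requires the M-P level $m_\theta(c)$ to lie strictly below the $E_\theta$-concentration threshold, uniformly for $\theta$ close to $1$. I expect this to be achievable by pre-selecting $\theta_0$ close enough to $1$, using the continuity of both the threshold and $m_\theta(c)$ in $\theta$; the quantitative comparison at $\theta = 1$ is precisely the content of Section \ref{estimate}.
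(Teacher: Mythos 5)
Your proposal follows essentially the same route as the paper: the $\theta$-perturbed family $E_\theta$ (with the same case-split so that the $\theta$-multiplied part is nonnegative when $a<0$), uniform mountain-pass geometry, Jeanjean's monotonicity trick at a.e.\ $\theta$, a Br\'ezis--Lieb compactness argument under the strict level bound $m_\theta < \nu_{c,\theta} + \frac1N\mathcal{S}^{N/2}\theta^{(2-N)/2}$ (deferred, as in the paper, to the Section \ref{estimate} estimate at $\theta=1$ plus continuity of $m_\theta$ and $\nu_{c,\theta}$), and then the decisive step the paper also uses: the genuine solutions $u_\theta$ satisfy the Pohozaev identity on the star-shaped domain, hence lie in $\mathcal{G}_\theta$, which yields an $H_0^1$ bound by $Cm_{\theta_0}$ independent of the Jeanjean derivative constant, after which $\theta_k \nearrow 1$ finishes as in the paper's Lemma \ref{continuity}. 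The only caveats are cosmetic: the one-sided scaling $u^t$ is needed for the M-P geometry rather than for the abstract monotonicity trick itself, and the concentration threshold must be measured relative to $\nu_{c,\theta}$ (which your ``continuity of the threshold'' implicitly assumes and the paper proves via \eqref{theta2}--\eqref{theta3}).
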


To complete the proof of the above Proposition \ref{bounded (PS) sequence}, we introduce the family of functionals
\begin{align}
	E_\theta(u) =
	\left\{
	\begin{aligned}
		& \frac{1}{2}\int_{\Omega}|\nabla u|^{2}dx - \theta\int_{\Omega}\left( \frac{1}{2^\ast}|u^+|^{2^\ast} + \frac{a}{p}|u^+|^{p}\right) dx \quad \text{if } a > 0, \\
		& \frac{1}{2}\int_{\Omega}|\nabla u|^{2}dx - \frac{\theta}{2^\ast}\int_{\Omega}|u^+|^{2^\ast}dx - \frac{a}{p}\int_{\Omega}|u^+|^{p}dx \quad \text{if } a \leq 0,
	\end{aligned}
	\right. \nonumber
\end{align}
 where $\theta \in [1/2,1]$.
The crucial idea is to use the monotonicity trick \cite{Jean} and the Pohozaev identity.

\vskip0.1in
Similar to the case that $\theta = 1$, we define
\begin{align*}
	\mathcal{G}_\theta := \left\{u \in S_c^+: \int_{\Omega}|\nabla u|^2dx > \theta\int_{\Omega}|u^+|^{2^\ast} dx + a\gamma_{p}\theta\int_{\Omega}|u^+|^{p} dx \right\},
\end{align*}
when $a > 0$ and
\begin{align*}
	\mathcal{G}_\theta := \left\{u \in S_c^+: \int_{\Omega}|\nabla u|^2dx > \theta\int_{\Omega}|u^+|^{2^\ast} dx + a\gamma_{p}\int_{\Omega}|u^+|^{p} dx \right\},
\end{align*}
when $a \leq 0$. Note that $\mathcal{G} \subset \mathcal{G}_\theta$ for $\theta < 1$. Hence, $\mathcal{G}_\theta$ is not empty when $\mathcal{G} \neq \emptyset$. We further set
\begin{align*}
	\nu_{c,\theta} := \inf_{\mathcal{G}_\theta}E_\theta.
\end{align*}

\begin{lemma} \label{con}
	Let $\Omega$ be smooth, bounded, and star-shaped with respect to the origin, and let $a, c, p$ satisfy one of the conditions \eqref{a = 0}-\eqref{a < 0}. Then we have
	\begin{align}
	& \lim_{\theta\to1^-}\inf_{\partial\mathcal{G}}E_\theta = \inf_{\partial\mathcal{G}}E, \label{theta1} \\
	& \liminf_{\theta\to1^-}\inf_{\mathcal{G}_\theta \backslash \mathcal{G}}E > \inf_{\mathcal{G}}E, \label{theta2} \\
	& \lim\limits_{\theta \to 1^-}\nu_{c,\theta} = \nu_{c}. \label{theta3}
	\end{align}
\end{lemma}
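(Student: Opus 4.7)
The plan is to prove the three parts of the lemma in order. For \eqref{theta1}, note that $E_\theta(u) - E(u) = (1-\theta)R(u)$, where $R(u) \geq 0$ equals $\tfrac{1}{2^\ast}\int_\Omega|u^+|^{2^\ast}$ when $a \leq 0$ and $\tfrac{1}{2^\ast}\int_\Omega|u^+|^{2^\ast}+\tfrac{a}{p}\int_\Omega|u^+|^p$ when $a > 0$. In particular $E_\theta \geq E$ on $S_c^+$ for every $\theta \leq 1$, giving $\inf_{\partial\mathcal{G}}E_\theta \geq \inf_{\partial\mathcal{G}}E$ automatically. For the reverse, take a minimizing sequence $\{u_n\}\subset \partial\mathcal{G}$ for $\inf_{\partial\mathcal{G}}E$; the estimates \eqref{use1}--\eqref{use2} (which extend by continuity to $\partial\mathcal{G}$) force $\{u_n\}$ to be $H_0^1$-bounded, so $R(u_n)$ is uniformly bounded by Sobolev and $E_\theta(u_n) = E(u_n) + O(1-\theta)$ uniformly in $n$, which yields the matching upper bound.

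The core of the lemma is \eqref{theta2}, which I would establish by contradiction: assume $\theta_n \to 1^-$ and $u_n \in \mathcal{G}_{\theta_n}\setminus\mathcal{G}$ with $E(u_n) \to L \leq \nu_c$. First I show $\{u_n\}$ is $H_0^1$-bounded by rerunning the derivations of \eqref{use1}--\eqref{use2} inside $\mathcal{G}_{\theta_n}$: using $u_n \in \mathcal{G}_{\theta_n}$ one obtains $E_{\theta_n}(u_n) \geq \tfrac{1}{N}\int_\Omega|\nabla u_n|^2$ (and an analogue for $a > 0$), and the $O(1-\theta_n)$ gap between $E_{\theta_n}$ and $E$ is absorbed via Sobolev. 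Passing to $u_n \rightharpoonup u$ weakly in $H_0^1$, the compact embedding into $L^r$ for $r<2^\ast$ gives $u \in S_c^+$ and $\int_\Omega|u_n^+|^p \to \int_\Omega|u^+|^p$. Writing $G_v=\int|\nabla v|^2$, $A_v=\int|v^+|^{2^\ast}$, $B_v=a\gamma_p\int|v^+|^p$, the sandwich
\begin{equation*}
\theta_n A_{u_n} + B_{u_n} < G_{u_n} \leq A_{u_n} + B_{u_n}
\end{equation*}
combined with boundedness of $A_{u_n}$ forces the Pohozaev defect $G_{u_n}-A_{u_n}-B_{u_n} \to 0$. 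The Br\'ezis--Lieb lemma then gives $G_{u_n}=G_u+\beta+o(1)$, $A_{u_n}=A_u+\alpha+o(1)$ along a subsequence, with $\alpha \leq \mathcal{S}^{-2^\ast/2}\beta^{2^\ast/2}$ from Sobolev on the residual, $\alpha-\beta = G_u-A_u-B_u$, and $L = E(u) + \tfrac{\beta}{2}-\tfrac{\alpha}{2^\ast}$. I would then close by dichotomy: if $\beta=0$, Sobolev gives $\alpha=0$, so $u_n \to u$ strongly in $H_0^1$, $u \in \partial\mathcal{G}$, and $L=E(u) \geq \inf_{\partial\mathcal{G}}E > \nu_c$ by \eqref{bbou}, a contradiction; if $\beta>0$, these identities combined with Sobolev applied in the limit $\lim G_{u_n} = \lim A_{u_n}+B_u$ collapse (cleanest when $a=0$, where $L=(G_u+\beta)/N$ and the Sobolev inequality for the limit forces $G_u+\beta \geq \mathcal{S}^{N/2}$) to $L \geq \mathcal{S}^{N/2}/N$, again contradicting $L \leq \nu_c < \mathcal{S}^{N/2}/N$ via the upper bound on $\nu_c$ from Theorem~\ref{thmB.2} (with an analogous bound using $(\tfrac{1}{2}-\tfrac{1}{p\gamma_p})\mathcal{S}^{N/2}$ when $a>0$).

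With \eqref{theta1} and \eqref{theta2} in hand, \eqref{theta3} is short. The normalized ground state $u_c \in \mathcal{G}$ of Theorem~\ref{thmB.2} lies in $\mathcal{G}_\theta$ for every $\theta\leq 1$, giving $\nu_{c,\theta} \leq E_\theta(u_c) = \nu_c + (1-\theta)R(u_c)$ and hence $\limsup_{\theta\to 1^-}\nu_{c,\theta} \leq \nu_c$. For the matching lower bound, $\nu_{c,\theta} \geq \inf_{\mathcal{G}_\theta}E = \min\{\nu_c, \inf_{\mathcal{G}_\theta\setminus\mathcal{G}}E\}$, which equals $\nu_c$ once $\theta$ is sufficiently close to $1$ by \eqref{theta2}. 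The main obstacle throughout is the $\beta>0$ branch of \eqref{theta2}: the critical Sobolev exponent allows the residual $u_n-u$ to concentrate in $L^{2^\ast}$, and bubbling must be excluded using the strict upper bound $\nu_c < \mathcal{S}^{N/2}/N$ (and its counterpart when $a>0$) coming from Theorem~\ref{thmB.2}, which is in turn a consequence of the smallness conditions \eqref{a = 0}--\eqref{a < 0} on $c$.
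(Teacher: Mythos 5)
Your proposal is correct in substance, but for the central statement \eqref{theta2} it takes a genuinely different and heavier route than the paper. The paper's proof is purely inequality-based and needs no compactness at all: it observes that any $u \in \mathcal{G}_\theta \setminus \mathcal{G}$ satisfies the non-strict reversed Pohozaev inequality $\int_\Omega|\nabla u|^2dx \leq \int_\Omega|u^+|^{2^\ast}dx + a\gamma_p\int_\Omega|u^+|^pdx$ --- exactly the relation defining $\partial\mathcal{G}$ --- so the Sobolev/Gagliardo--Nirenberg lower bounds \eqref{use3}--\eqref{use4} on the gradient apply to \emph{each} $u_n$ individually, and the $\theta$-dependent coercivity estimates \eqref{v1}--\eqref{v2} then give $\liminf_{\theta\to1^-}\inf_{\mathcal{G}_\theta\setminus\mathcal{G}}E$ at least the same threshold that strictly dominates $\nu_c$ under \eqref{a = 0}--\eqref{a < 0} (this is where the conditions on $c$ enter, exactly as in Proposition \ref{proppre}(2)). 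Your Br\'ezis--Lieb dichotomy ($\beta = 0$ forcing $u \in \partial\mathcal{G}$ via \eqref{bbou}, $\beta > 0$ forcing $L \geq \frac1N\mathcal{S}^{N/2}$ via subadditivity of $t \mapsto t^{2/2^\ast}$) is a valid contradiction argument and does close for all three sign cases, but the weak-limit machinery is unnecessary here since the relevant inequalities hold pointwise along the sequence; the paper also proves the auxiliary upper bound \eqref{v3} via the scaling $u_n^{t_n}$ with $t_n \to 1^+$ to get uniform energy bounds, which your contradiction hypothesis $E(u_n) \to L \leq \nu_c$ supplies for free. Conversely, your proof of \eqref{theta3} is \emph{simpler} than the paper's: using $E_\theta \geq E$ and $\inf_{\mathcal{G}_\theta}E = \min\{\nu_c, \inf_{\mathcal{G}_\theta\setminus\mathcal{G}}E\}$ avoids the paper's boundedness argument for near-minimizing sequences $\{w_n\} \subset \mathcal{G}_{\theta_n}$ entirely.

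Two caveats on your sketch. First, in the boundedness step your phrase ``the $O(1-\theta_n)$ gap between $E_{\theta_n}$ and $E$ is absorbed via Sobolev'' does not work as literally stated: $(1-\theta_n)\int_\Omega|u_n^+|^{2^\ast}dx$ is controlled only by $(1-\theta_n)\bigl(\int_\Omega|\nabla u_n|^2dx\bigr)^{2^\ast/2}$, which is superlinear in the gradient and cannot be absorbed into a linear term regardless of the small factor ($G_n \to \infty$ with $1-\theta_n \sim G_n^{-2/(N-2)}$ is not excluded this way). The correct route --- which you also name, and which the paper uses --- is to rerun \eqref{use1}--\eqref{use2} inside $\mathcal{G}_{\theta_n}$ to get $E(u_n) > \bigl(\frac12 - \frac1{2^\ast\theta_n}\bigr)\int_\Omega|\nabla u_n|^2dx$ (resp.\ $\bigl(\frac12 - \frac1{p\gamma_p\theta_n}\bigr)\int_\Omega|\nabla u_n|^2dx$), with constant bounded away from $0$ as $\theta_n \to 1^-$. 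Second, in the $\beta > 0$ branch with $a > 0$, the bare threshold $\bigl(\frac12-\frac1{p\gamma_p}\bigr)\mathcal{S}^{N/2}$ is not sufficient, because the Gagliardo--Nirenberg term degrades the lower bound on $G_u + \beta$; you need the two-term $\xi$-parametrized minimum appearing in the $\nu_c$-range of Theorem \ref{thmB.2} (i.e., the analogue of \eqref{use4}), after which the argument goes through in parallel with Proposition \ref{proppre}(2).
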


\begin{proof}
	We show \eqref{theta1}-\eqref{theta3} respectively.
	
	\vskip0.1in
	\noindent\textbf{Proof to \eqref{theta1}:} Note that $E_\theta(u) \geq E(u)$ for all $u \in \partial\mathcal{G}$ and $\theta \in [1/2,1]$. Hence, we have
	$$
	\inf_{\partial\mathcal{G}}E_\theta \geq \inf_{\partial\mathcal{G}}E, \quad \forall \theta \in [\frac12,1],
	$$
	and it is sufficient to prove $\limsup\limits_{\theta \to 1^-}\inf\limits_{\partial\mathcal{G}}E_\theta \leq \inf\limits_{\partial\mathcal{G}}E$. Take a minimizing sequence $\{u_n\} \subset \partial \mathcal{G}$ such that $\lim\limits_{n \to \infty}E(u_n) = \inf\limits_{\partial\mathcal{G}}E$. By using inequalities like \eqref{use1} when $a \leq 0$ and like \eqref{use2} when $a > 0, 2 + 4/N < p < 2^\ast$ for $u \in \partial \mathcal{G}$, we know $\{u_n\}$ is bounded in $H_0^1(\Omega)$. Then one gets
	\begin{align*}
	\inf_{\partial\mathcal{G}}E_\theta & \leq \liminf_{n \to \infty}E_\theta(u_n) \\
	& = \lim_{n \to \infty}E(u_n) + o_\theta(1) \\
	& = \inf\limits_{\partial\mathcal{G}}E + o_\theta(1),
	\end{align*}
	where $o_\theta(1) \to 0$ as $\theta \to 1^-$. Sending $\theta$ to $1^-$ we have $\limsup\limits_{\theta \to 1^-}\inf\limits_{\partial\mathcal{G}}E_\theta \leq \inf\limits_{\partial\mathcal{G}}E$, and this completes the proof to \eqref{theta1}.
	
	\vskip0.1in
	\noindent\textbf{Proof to \eqref{theta2}:} Firstly, we prove
	\begin{align} \label{v3}
	\limsup\limits_{\theta \to 1^-}\inf\limits_{\mathcal{G}_\theta \backslash \mathcal{G}}E \leq \inf\limits_{\partial\mathcal{G}}E.
	\end{align}
	Similar to the proof to \eqref{theta1}, we can take a bounded minimizing sequence $\{u_n\} \subset \partial \mathcal{G}$ such that $\lim\limits_{n \to \infty}E(u_n) = \inf\limits_{\partial\mathcal{G}}E$. For any $\{\theta_n\}$ with $\theta_n \to 1^-$, similar to the proof to Proposition \ref{proppre} (3), we can take $t_n \to 1^+$ such that $u_n^{t_n} \in \mathcal{G}_{\theta_n} \backslash \mathcal{G}$. Hence, we have
	\begin{align*}
	\inf_{\mathcal{G}_{\theta_n} \backslash \mathcal{G}}E \leq E(u_n^{t_n}) = E(u_n) + o_n(1).
	\end{align*}
	Sending $n$ to infinity one deduces that $\limsup\limits_{n\to\infty}\inf\limits_{\mathcal{G}_{\theta_n} \backslash \mathcal{G}}E \leq \inf\limits_{\partial\mathcal{G}}E$, implying \eqref{v3}.
	
	Next, for any $u \in \mathcal{G}_\theta \backslash \mathcal{G}$, we have
	$$
	\int_{\Omega}|\nabla u|^2dx \leq \int_{\Omega}|u^+|^{2^\ast} dx + a\gamma_{p}\int_{\Omega}|u^+|^{p}.
	$$
	Then using a similar inequality to \eqref{use3} when $a \leq 0$, we have
	\begin{align*}
	\inf_{u \in \mathcal{G}_\theta \backslash \mathcal{G}}\int_{\Omega}|\nabla u|^2dx \geq \mathcal{S}^{N/2}.
	\end{align*}
	Using a similar inequality to \eqref{use4} when $a > 0, 2 + 4/N < p < 2^\ast$, we have for any $\xi \in (0,1)$,
	\begin{align*}
	\inf_{u \in \mathcal{G}_\theta \backslash \mathcal{G}}\int_{\Omega}|\nabla u|^2dx \geq \xi^{(N-2)/2}\mathcal{S}^{N/2},
	\end{align*}
	or
	\begin{align*}
	\inf_{u \in \mathcal{G}_\theta \backslash \mathcal{G}}\int_{\Omega}|\nabla u|^2dx \geq \left( \frac{1-\xi}{a\mathcal{C}_p^pc^{p(1-\gamma_p)/2}}\right) ^{2/(p\gamma_p-2)}.
	\end{align*}
	Take a minimizing sequence $\{v_n\} \subset \mathcal{G}_\theta \backslash \mathcal{G}$ such that $\displaystyle \lim_{n\to\infty}E(v_n) = \inf_{\mathcal{G}_\theta \backslash \mathcal{G}}E$. When $a \leq 0$,
	\begin{align*}
	\int_{\Omega}|\nabla v_n|^2dx > \theta\int_{\Omega}|v_n^+|^{2^\ast} dx + a\gamma_{p}\int_{\Omega}|v_n^+|^{p} dx.
	\end{align*}
	Hence, for $\theta$ close to $1^-$ such that $p\gamma_p < 2^\ast\theta$ we have
	\begin{align} \label{v1}
	E(v_n) & > \left( \frac12-\frac1{2^\ast\theta}\right)  \int_{\Omega}|\nabla v_n|^2dx + a\left( \frac{\gamma_p}{2^\ast\theta}-\frac{1}{p}\right) \int_{\Omega}|v_n^+|^{p} dx \nonumber \\
	& \geq \left( \frac12-\frac1{2^\ast\theta}\right) \int_{\Omega}|\nabla v_n|^2dx .
	\end{align}
	When $a > 0$ and $2+4/N < p < 2^\ast$,
	\begin{align*}
	\int_{\Omega}|\nabla v_n|^2dx > \theta\left( \int_{\Omega}|v_n^+|^{2^\ast} dx + a\gamma_{p}\int_{\Omega}|v_n^+|^{p} dx\right).
	\end{align*}
	Hence, for $\theta$ close to $1^-$ such that $p\gamma_p\theta > 2$ we have
	\begin{align} \label{v2}
	E(v_n) > \left( \frac12 - \frac{1}{p\gamma_p\theta}\right)  \int_{\Omega}|\nabla v_n|^2dx.
	\end{align}
	Using \eqref{v1}, \eqref{v2} and \eqref{v3}, we deduce the boundedness of $\{v_n\}$ in $H_0^1(\Omega)$ uniformly when $\theta \to 1^-$. Since $c$ satisfies \eqref{a = 0}-\eqref{a < 0}, following the arguments in the proof to Proposition \ref{proppre} (2), we can show \eqref{theta2}.
	
	\vskip0.1in
	\noindent\textbf{Proof to \eqref{theta3}:} On the one hand, since $u_c \in \mathcal{G} \subset \mathcal{G}_\theta$, we have
	\begin{align*}
	\nu_{c,\theta} \leq E_\theta(u_c) = E(u_c) + o_\theta(1) = \nu_c + o_\theta(1),
	\end{align*}
	where $u_c$ is given by Theorem \ref{thmB.2}. Sending $\theta$ to $1^-$ one gets
	\begin{align*}
	\limsup_{\theta \to 1^-}\nu_{c,\theta} \leq \nu_c.
	\end{align*}
	
	On the other hand, for any $\{\theta_n\}$ with $\theta_n \to 1^-$, we can take a sequence $\{w_n\} \subset \mathcal{G}_{\theta_n}$ such that $E_{\theta_n}(w_n) - \nu_{c,\theta_n} \to 0$ as $n \to \infty$. Using $\displaystyle \limsup_{n \to \infty}\nu_{c,\theta_n} \leq \nu_c$ and the fact that $\{w_n\} \subset \mathcal{G}_{\theta_n}$, we can prove that $\{w_n\}$ is bounded in $H_0^1(\Omega)$. By \eqref{theta2}, we know $\{w_n\} \subset \mathcal{G}$ for $n$ large enough since
	$$
	\limsup_{n\to\infty}E(w_n) = \limsup_{n\to\infty}E_{\theta_n}(w_n) = \limsup_{n\to\infty}\nu_{c,\theta_n} \leq \nu_c.
	$$
	Thus we have
	\begin{align*}
	E_{\theta_n}(w_n) = E(w_n) + o_n(1) \geq \nu_c + o_n(1).
	\end{align*}
	Sending $n$ to infinity one gets
	\begin{align*}
	\liminf_{n\to\infty}\nu_{c,\theta_n} = \liminf_{n\to\infty}E_{\theta_n}(w_n) \geq \nu_c,
	\end{align*}
	implying $$\displaystyle \liminf_{\theta \to 1^-}\nu_{c,\theta} \geq \nu_c.$$ The proof is complete.
\end{proof}

\begin{lemma}[Uniform M-P geometry] \label{M-P geometry}
Under the assumptions of Lemma \ref{con}, there exist $\epsilon \in (0,1/2)$ and $\delta > 0$ independent of $\theta$ such that
\begin{align} \label{mp1}
E_\theta(u_c) + \delta < \inf_{\partial\mathcal{G}}E_\theta, \quad \forall \theta \in (1-\epsilon,1],
\end{align}
and there exists $v \in S_c^+ \backslash \mathcal{G}$ such that
\begin{align*}
m_\theta := \inf_{\gamma \in \Gamma}\sup_{t \in [0,1]}E_\theta(\gamma(t)) > E_\theta(u_c) + \delta = \max\{E_\theta(u_c),E_\theta(v)\} + \delta,
\end{align*}
where
$$
\Gamma := \{\gamma \in C([0,1],S_c^+): \gamma(0) = u_c, \gamma(1) = v\}
$$
is independent of $\theta$.
\end{lemma}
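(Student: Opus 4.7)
The plan is to combine the strict gap $\nu_c < \inf_{\partial\mathcal{G}}E$ from Proposition \ref{proppre}(2) with the continuity statements of Lemma \ref{con}, and to produce the endpoint $v$ by pushing $u_c$ along the dilation fiber $t \mapsto u_c^t$ until its energy drops well below that of $u_c$. Everything else is an intermediate value theorem applied to the functional that defines $\mathcal{G}$.

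For \eqref{mp1}, set $3\delta := \inf_{\partial\mathcal{G}}E - E(u_c) > 0$. Since $u_c$ is a fixed element of $H_0^1(\Omega)$, the map $\theta \mapsto E_\theta(u_c)$ is affine in $\theta$, so $E_\theta(u_c) \to E(u_c)$ as $\theta \to 1^-$; combining this with \eqref{theta1} of Lemma \ref{con}, I can choose $\epsilon \in (0,1/2)$ so that both $|E_\theta(u_c) - E(u_c)| < \delta$ and $|\inf_{\partial\mathcal{G}}E_\theta - \inf_{\partial\mathcal{G}}E| < \delta$ on $(1-\epsilon,1]$. Then
\[
\inf_{\partial\mathcal{G}}E_\theta > \inf_{\partial\mathcal{G}}E - \delta = E(u_c) + 2\delta > E_\theta(u_c) + \delta,
\]
which is \eqref{mp1}.

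To choose $v$, recall from Proposition \ref{proppre}(3) that
\[
\phi(t) := E(u_c^t) = \frac{t^2}{2}\int_{\Omega}|\nabla u_c|^2dx - \frac{t^{2^\ast}}{2^\ast}\int_{\Omega}|u_c^+|^{2^\ast}dx - a\frac{t^{p\gamma_p}}{p}\int_{\Omega}|u_c^+|^{p}dx
\]
is well defined for $t \geq 1$ (because $\Omega$ is star-shaped), that $u_c^t \in S_c^+$, that $\phi'$ vanishes only at $t_{u_c} > 1$ and is negative thereafter, and that $\phi(t) \to -\infty$ as $t \to \infty$ since $2^\ast > \max\{2, p\gamma_p\}$. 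Fix once and for all $T > t_{u_c}$ large enough that $E(u_c^T) \leq E(u_c) - 1$, and set $v := u_c^T$. The identity
\[
\phi'(t) = t^{-1}\Bigl(\int_{\Omega}|\nabla u_c^t|^2dx - \int_{\Omega}|(u_c^t)^+|^{2^\ast}dx - a\gamma_p\int_{\Omega}|(u_c^t)^+|^{p}dx\Bigr)
\]
from the proof of Proposition \ref{proppre}(3) is strictly negative at $T$, so $v \in S_c^+ \setminus \overline{\mathcal{G}}$. Shrinking $\epsilon$ if necessary, continuity of $\theta \mapsto E_\theta(v)$ at the fixed point $v$ gives $E_\theta(v) \leq E_\theta(u_c)$ for every $\theta \in (1-\epsilon,1]$, hence $\max\{E_\theta(u_c),E_\theta(v)\} = E_\theta(u_c)$.

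For the mountain pass lower bound, observe that for any $\gamma \in \Gamma$ the function
\[
t \mapsto F(\gamma(t)) := \int_{\Omega}|\nabla \gamma(t)|^2dx - \int_{\Omega}|\gamma(t)^+|^{2^\ast}dx - a\gamma_p\int_{\Omega}|\gamma(t)^+|^{p}dx
\]
is continuous on $[0,1]$ by $\gamma \in C([0,1],H_0^1(\Omega))$ and Sobolev embedding, strictly positive at $t=0$ (as $u_c \in \mathcal{G}$) and strictly negative at $t=1$ (as $v \notin \overline{\mathcal{G}}$); the intermediate value theorem yields $t^\ast \in (0,1)$ with $\gamma(t^\ast) \in \partial\mathcal{G}$, so
\[
\sup_{t \in [0,1]}E_\theta(\gamma(t)) \geq E_\theta(\gamma(t^\ast)) \geq \inf_{\partial\mathcal{G}}E_\theta > E_\theta(u_c) + \delta.
\]
Taking the infimum over $\gamma$ gives $m_\theta > E_\theta(u_c) + \delta$. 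The only delicate point is insisting that $v$ (and therefore $\Gamma$) is selected before letting $\theta$ vary, since otherwise the M-P level would not be comparable across different $\theta$; this is precisely why the uniform statements of Lemma \ref{con} are indispensable.
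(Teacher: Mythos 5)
Your proposal is correct and follows essentially the same route as the paper: the gap \eqref{mp1} from Proposition \ref{proppre}(2) combined with \eqref{theta1}, an endpoint $v$ obtained by dilating along $t \mapsto u^t$ past the unique zero of $\phi'$ so that $v \in S_c^+\setminus\mathcal{G}$ with the defining functional strictly negative, and the intermediate value theorem forcing every path in $\Gamma$ to cross $\partial\mathcal{G}$, whence $m_\theta \geq \inf_{\partial\mathcal{G}}E_\theta > E_\theta(u_c)+\delta$. The only (harmless) deviation is that the paper secures $E_\theta(v) < E_\theta(u_c)$ for all $\theta \in [1/2,1]$ at once via the uniform-in-$\theta$ divergence $E_\theta(w^t) \to -\infty$, while you fix $v$ using $E(v) \leq E(u_c)-1$ at $\theta = 1$ and then shrink $\epsilon$ using the affine dependence of $E_\theta(v)$ on $\theta$, which is equally valid since the lemma only concerns $\theta \in (1-\epsilon,1]$.
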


\textit{Proof.  }  Note that $\lim\limits_{\theta \to 1^-}E_\theta(u_c) = E(u_c) = \nu_c$. Then using \eqref{theta1} and Proposition \ref{proppre} (2) we have
\begin{align*}
\lim\limits_{\theta \to 1^-}E_\theta(u_c) = \nu_{c} < \inf_{\partial\mathcal{G}}E = \lim_{\theta\to1^-}\inf_{\partial\mathcal{G}}E_\theta.
\end{align*}
By choosing $\displaystyle 2\delta = \inf_{\partial\mathcal{G}}E - \nu_{c}$ and $\epsilon$ small enough we get \eqref{mp1}.

For $w \in S_c^+$, we recall that
$$
w^t = t^{\frac{N}{2}}w(tx)\in S_c^+, t \geq 1.
$$
When $a \leq 0$, since $2^\ast > \max\{2,p\gamma_p\}$, we have as $t \to \infty$,
\begin{align*}
E_\theta(w_t) =& \frac{1}{2}\int_{\Omega}|\nabla w_t|^{2}dx -  \frac{\theta}{2^\ast}\int_{\Omega}|(w^t)^+|^{2^\ast}dx - \frac{a}{p}\int_{\Omega}|(w^t)^+|^pdx \nonumber \\
\leq& \frac{t^2}{2}\int_{\Omega}|\nabla w|^{2}dx + \frac{|a|}{p}t^{p\gamma_p}\int_{\Omega}|w^+|^pdx - \frac12\frac{1}{2^\ast}t^{2^\ast}\int_{\Omega}|w^+|^{2^\ast}dx \nonumber \\
\to& -\infty \text{ uniformly w.r.t. } \theta \in [\frac12,1].
\end{align*}
When $a > 0$ and $2+4/N < p < 2^\ast$, we have as $t \to \infty$,
\begin{align*}
E_\theta(w_t) =& \frac{1}{2}\int_{\Omega}|\nabla w_t|^{2}dx -  \theta\left( \frac{1}{2^\ast}\int_{\Omega}|(w^t)^+|^{2^\ast}dx + \frac{a}{p}\int_{\Omega}|(w^t)^+|^pdx\right)  \nonumber \\
\leq& \frac{t^2}{2}\int_{\Omega}|\nabla w|^{2}dx - \frac12\left(\frac{1}{2^\ast}t^{2^\ast}\int_{\Omega}|w^+|^{2^\ast}dx + \frac{a}{p}t^{p\gamma_p}\int_{\Omega}|w^+|^pdx\right)  \nonumber \\
\to& -\infty \text{ uniformly w.r.t. } \theta \in [\frac12,1].
\end{align*}

Take $v = w^t$ with $t$ large enough such that $E_\theta(v) < E_\theta(u_c)$. Also, using similar arguments to the proof to Proposition \ref{proppre} (3), we can assume $v \notin \mathcal{G}$. Then for any $\gamma \in \Gamma$, there exists $t^\ast \in (0,1)$ such that $\gamma(t^\ast) \in \partial\mathcal{G}$. Hence, $$\inf_{\gamma \in \Gamma}\sup_{t \in [0,1]}E_\theta(\gamma(t)) \geq \inf_{u \in \partial\mathcal{G}}E_\theta(u).$$ By \eqref{mp1} we complete the proof.
\qed\vskip 5pt

\begin{lemma} \label{continuity}
	Under the assumptions of Lemma \ref{M-P geometry}, we have $\lim\limits_{\theta \to 1^-}m_\theta = m_1$.
\end{lemma}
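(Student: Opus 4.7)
The plan is to exploit two facts: (i) $\theta \mapsto E_\theta(u)$ is monotone on $S_c^+$, and (ii) $\Gamma$ is independent of $\theta$ (Lemma \ref{M-P geometry}), so a near-optimal path for $E_1$ can be reused for $E_\theta$ with $\theta$ close to $1^-$.

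First I would record the pointwise monotonicity. For any $u \in S_c^+$ and $1/2 \leq \theta \leq \theta' \leq 1$, direct inspection of the two definitions of $E_\theta$ gives
\begin{align*}
E_\theta(u) - E_{\theta'}(u) = \frac{\theta'-\theta}{2^\ast}\int_{\Omega}|u^+|^{2^\ast}dx \ \text{(if }a\leq 0\text{)},\quad\text{and}\quad E_\theta(u) - E_{\theta'}(u) = (\theta'-\theta)\Bigl(\tfrac{1}{2^\ast}\smallint|u^+|^{2^\ast} + \tfrac{a}{p}\smallint|u^+|^p\Bigr) \ \text{(if }a>0\text{)},
\end{align*}
both non-negative. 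Hence $E_\theta \geq E_{\theta'}$ on $S_c^+$ whenever $\theta \leq \theta'$, so taking the infimum over $\Gamma$ (same $\Gamma$ for every $\theta$) of $\sup_{t} E_\theta(\gamma(t))$ preserves the inequality. This yields $m_\theta \geq m_1$ for all $\theta \in (1-\epsilon,1]$, which is the lower-semicontinuity half of the claim, i.e.\ $\liminf_{\theta\to1^-} m_\theta \geq m_1$.

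For the upper bound, fix $\eta > 0$ and choose $\gamma_\eta \in \Gamma$ with $\sup_{t\in[0,1]} E_1(\gamma_\eta(t)) \leq m_1 + \eta$. Since $\gamma_\eta\colon [0,1]\to H_0^1(\Omega)$ is continuous, its image $K_\eta := \gamma_\eta([0,1])$ is compact in $H_0^1(\Omega)$, hence bounded in $L^{2^\ast}(\Omega)$ and in $L^p(\Omega)$ by Sobolev embedding; set
\begin{align*}
M_\eta := \sup_{u \in K_\eta}\left(\tfrac{1}{2^\ast}\int_{\Omega}|u^+|^{2^\ast}dx + \tfrac{|a|}{p}\int_{\Omega}|u^+|^{p}dx\right) < \infty.
\end{align*}
The monotonicity computation above then gives $E_\theta(u) \leq E_1(u) + (1-\theta)M_\eta$ for every $u \in K_\eta$ (in both cases $a\leq 0$ and $a>0$), so
\begin{align*}
m_\theta \leq \sup_{t\in[0,1]} E_\theta(\gamma_\eta(t)) \leq \sup_{t\in[0,1]} E_1(\gamma_\eta(t)) + (1-\theta)M_\eta \leq m_1 + \eta + (1-\theta)M_\eta.
\end{align*}

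Sending $\theta \to 1^-$ with $\eta$ fixed yields $\limsup_{\theta\to1^-} m_\theta \leq m_1 + \eta$, and then $\eta \to 0^+$ gives $\limsup_{\theta\to1^-} m_\theta \leq m_1$. Combined with the lower bound from monotonicity, this establishes $\lim_{\theta\to1^-} m_\theta = m_1$. I expect no real obstacle here: the proof uses only monotonicity of $E_\theta$ in $\theta$, the $\theta$-independence of $\Gamma$ guaranteed by Lemma \ref{M-P geometry}, and continuity of $\gamma_\eta$ to get a uniform bound $M_\eta$ on its image; no compactness of sublevel sets or Palais-Smale analysis is needed.
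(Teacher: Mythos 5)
Your proof is correct and follows essentially the same route as the paper's: monotonicity of $\theta \mapsto E_\theta$ gives $m_\theta \geq m_1$, and reusing a near-optimal path $\gamma_\eta \in \Gamma$ (possible since $\Gamma$ is $\theta$-independent) gives the upper bound. Your explicit uniform bound $M_\eta$ on the compact image of the path is exactly what justifies the paper's more tersely stated step $\sup_{t}E_{\theta_n}(\gamma_0(t)) = \sup_{t}E(\gamma_0(t)) + o_n(1)$, so you have simply filled in a detail the paper leaves implicit.
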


\textit{Proof.  } For any $u \in S_c^+$, we have $E_\theta(u) \geq E(u)$ when $\theta < 1$. This shows that $\liminf\limits_{\theta \to 1^-}m_\theta \geq m_1$. It is sufficient to prove that $\limsup\limits_{\theta \to 1^-}m_\theta \leq m_1$. By the definition of $m_1$, for any $\epsilon > 0$, we can take a $\gamma_0 \in \Gamma$ such that
$$
\sup_{t \in [0,1]}E(\gamma_0(t)) < m_1 + \epsilon.
$$
For any $\theta_n \to 1^-$, we have
\begin{align*}
m_{\theta_n} = & \inf_{\gamma \in \Gamma}\sup_{t \in [0,1]}E_{\theta_n}(\gamma(t)) \leq \sup_{t \in [0,1]}E_{\theta_n}(\gamma_0(t)) \nonumber \\
= & \sup_{t \in [0,1]}E(\gamma_0(t)) + o_n(1) < m_1 + \epsilon + o_n(1).
\end{align*}
By the arbitrariness of $\epsilon$ one gets
$$
\lim_{n \to \infty}m_{\theta_n} \leq m_1,
$$
implying that
$$
\limsup_{\theta \to 1^-}m_\theta \leq m_1.
$$
This completes the proof.
\qed\vskip 5pt

At this point we wish to use the monotonicity trick (\cite{Jean}) on the family of functionals $E_\theta$. We recall the general setting in which the theorem is stated. Let $(W,\langle\cdot,\cdot\rangle,\|\cdot\|)$ and $(H,(\cdot,\cdot),|\cdot|)$ be two Hilbert spaces, which form a variational triple, i.e.,
$$
W \hookrightarrow H = H' \hookrightarrow W',
$$
with continuous injections. For simplicity, we assume that the continuous injection $W \hookrightarrow H$ has the norm at most 1 and identify $W$ with its image in $H$. Let $P \subset H$ be a cone. Any $u \in H$ can be decomposed into $u^+ + u^-$ where $u^+ \in P$ and $u^- \in -P$. Define
$$
S_c^+ := \{u \in W: |u^+|^2 =c\}, c > 0.
$$
In our application there hold that $W = H_0^1(\Omega)$, $H = L^2(\Omega)$ and $P = \{u \in L^2(\Omega): u \geq 0\}$. Using \cite{Jean} we have the following result.

\begin{theorem}[Monotonicity trick] \label{monotonicity trick}
Let $I \subset \mathbb{R}^+$ be an interval. We consider a family $(E_\theta)_{\theta \in I}$ of $C^1$-functionals on $W$ of the form
$$
E_\theta(u) = A(u) - \theta B(u), \quad \theta \in I
$$
where $B(u) \geq 0, \forall u \in W$ and such that either $A(u) \rightarrow \infty$ or $B(u) \rightarrow \infty$ as $\|u\| \rightarrow \infty$. We assume there are two points $(v_1, v_2)$ in $S_c^+$ (independent of $\theta$) such that setting
$$
\Gamma = \{\gamma \in C([0, 1], S_c^+), \gamma(0) = v_1, \gamma(1) = v_2\},
$$
there holds, $\forall \theta \in I$,
$$
m_\theta:= \inf_{\gamma \in \Gamma}\sup_{t \in [0,1]}E_\theta(\gamma(t)) > \max\{E_\theta(v_1),E_\theta(v_2)\}.
$$
Then, for almost every $\theta \in I$, there is a sequence $\{v_n\} \subset S_c^+$ such that

$(i)$ $\{v_n\}$ is bounded in $W$, \ $(ii)$ $E_\theta(v_n) \rightarrow m_\theta$, \ $(iii)$ $E'_\theta|_{S_c^+}(v_n) \rightarrow 0$ in $W'$.
\end{theorem}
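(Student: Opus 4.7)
The plan is to adapt Jeanjean's monotonicity argument \cite{Jean}. Since $B(u)\ge 0$ on $W$, for $\theta<\theta'$ one has $E_{\theta'}\le E_\theta$ pointwise, so $\theta\mapsto m_\theta$ is nonincreasing on $I$ and hence differentiable at almost every $\theta_0\in I$. I will show that at every such differentiability point one can produce a sequence $\{v_n\}\subset S_c^+$ satisfying (i)-(iii).

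First, I would exploit differentiability to enforce boundedness along near-optimal paths. Choose $\theta_n\in I$ with $\theta_n\nearrow\theta_0$ and, for each $n$, a path $\gamma_n\in\Gamma$ such that $\sup_{t\in[0,1]}E_{\theta_n}(\gamma_n(t))\le m_{\theta_n}+(\theta_0-\theta_n)$. Since $E_{\theta_n}-E_{\theta_0}=(\theta_0-\theta_n)B$, for every $t$ at which $E_{\theta_0}(\gamma_n(t))\ge m_{\theta_0}-(\theta_0-\theta_n)$ one obtains
\begin{align*}
B(\gamma_n(t))=\frac{E_{\theta_n}(\gamma_n(t))-E_{\theta_0}(\gamma_n(t))}{\theta_0-\theta_n}\le \frac{m_{\theta_n}-m_{\theta_0}}{\theta_0-\theta_n}+2,
\end{align*}
and the right-hand side is uniformly bounded in $n$ by the differentiability of $m_\theta$ at $\theta_0$. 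Combined with $A(\gamma_n(t))=E_{\theta_n}(\gamma_n(t))+\theta_n B(\gamma_n(t))$ this also yields a uniform bound on $A$, and the coercivity hypothesis forces $\|\gamma_n(t)\|\le M$ throughout the near-maximal set with $M$ independent of $n$.

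Second, I would extract the (PS) sequence by contradiction. Suppose that for some $\varepsilon,\rho>0$ every $v\in S_c^+$ with $\|v\|\le 2M$ and $|E_{\theta_0}(v)-m_{\theta_0}|<\varepsilon$ satisfies $\|(E_{\theta_0}|_{S_c^+})'(v)\|_{W'}\ge\rho$. A pseudo-gradient vector field for $E_{\theta_0}|_{S_c^+}$ on this set, cut off outside a slightly larger set, integrates to a deformation $\eta:S_c^+\to S_c^+$ that fixes the endpoints $v_1,v_2$, keeps $\|\cdot\|\le 3M$, and decreases $E_{\theta_0}$ by a fixed amount in the critical layer. Applying $\eta$ to $\gamma_n$ for large $n$, together with $E_{\theta_n}\le E_{\theta_0}+(\theta_0-\theta_n)B$ and the uniform bound on $B$ from the first step, contradicts the definition of $m_{\theta_n}$.

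The hard part will be carrying out the deformation while respecting the structure of $S_c^+$: since the constraint $|u^+|^2=c$ is defined via the nonsmooth map $u\mapsto u^+$, the set $S_c^+$ is not a $C^1$-submanifold of $W$ in the classical sense, so the usual Lagrange-multiplier pseudo-gradient formalism must be replaced by a one-sided analogue. Simultaneously, the flow must be localized inside a fixed ball of $W$ in order to preserve the boundedness obtained in the first step while still deforming all of $\gamma_n$ and fixing $v_1,v_2$. These are the nontrivial technical points, but they are handled by the standard machinery in \cite{Jean}.
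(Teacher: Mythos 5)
The paper does not prove this theorem at all: it is quoted directly from Jeanjean \cite{Jean}, and your sketch is precisely Jeanjean's monotonicity-trick argument (monotonicity and hence a.e.\ differentiability of $\theta \mapsto m_\theta$, uniform bounds on $B$ and $A$ along near-optimal paths at differentiability points, then a pseudo-gradient deformation and contradiction), so you match the intended proof. Two cosmetic remarks: the constraint functional $u \mapsto \int_\Omega |u^+|^2\,dx$ is in fact $C^1$ with derivative $2u^+ \neq 0$ on $S_c^+$, so $S_c^+$ is a genuine $C^1$ submanifold and the nonsmoothness worry in your last paragraph is unnecessary; and the final contradiction is most cleanly taken against the definition of $m_{\theta_0}$ (using $E_{\theta_0} \leq E_{\theta_n}$ and $m_{\theta_n} - m_{\theta_0} \leq K(\theta_0 - \theta_n)$, so that $\gamma_n$ is also near-optimal for $E_{\theta_0}$) rather than against $m_{\theta_n}$, which as you wrote it would require a bound on $B$ along the deformed path that is not available.
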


\begin{proposition}[Positive solutions for almost every $\theta$]
\label{Positive solutions for almost every theta}
    Under the assumptions of Lemma \ref{M-P geometry}, we further suppose that one of the following holds:
    \begin{itemize}
    	\item[(1)] $a = 0, N \geq 3$;
    	\item[(2)] $a < 0, N \in \{3,4,5\}, 2 < p \leq 2^\ast - 1$;
    	\item[(3)] $a > 0, N \in \{3,4,5\}, 2 + 4/N < p < 2^\ast$.
    \end{itemize}
Then for almost every $\theta \in [1-\epsilon,1]$ where $\epsilon$ is given by Lemma \ref{M-P geometry}, there exists a critical point $u_\theta$ of $E_\theta$ constrained on $S_c^+$ at the level $m_\theta$, which solves
    \begin{equation} \label{eq theta1}
\left\{
\begin{aligned}
& -\Delta u_\theta  = \lambda_\theta u_\theta + \theta|u_\theta|^{2^\ast-2}u_\theta + a|u_\theta|^{p-2}u_\theta \quad \text{in } \Omega, \\
& u_\theta > 0 \quad \text{in } \Omega, \quad u_\theta = 0 \quad \text{on } \partial \Omega,
\end{aligned}
\right.
\end{equation}
when $a \leq 0$ and
    \begin{equation} \label{eq theta2}
    \left\{
    \begin{aligned}
    & -\Delta u_\theta  = \lambda_\theta u_\theta + \theta|u_\theta|^{2^\ast-2}u_\theta + a\theta|u_\theta|^{p-2}u_\theta \quad \text{in } \Omega, \\
    & u_\theta > 0 \quad \text{in } \Omega, \quad u_\theta = 0 \quad \text{on } \partial \Omega,
    \end{aligned}
    \right.
    \end{equation}
    when $a > 0$ for some $\lambda_\theta$. Furthermore, $u_\theta \in \mathcal{G}_\theta$.
\end{proposition}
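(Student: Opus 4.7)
The plan is to combine the monotonicity trick of Theorem~\ref{monotonicity trick} with a concentration-compactness analysis in the spirit of the proof of Theorem~\ref{thmB.2}. First I would apply Theorem~\ref{monotonicity trick} to the family $(E_\theta)_{\theta \in [1-\epsilon,1]}$: the uniform mountain-pass geometry established in Lemma~\ref{M-P geometry} supplies endpoints $u_c,v\in S_c^+$ independent of $\theta$ with $m_\theta>\max\{E_\theta(u_c),E_\theta(v)\}+\delta$, and the leading quadratic part of $E_\theta$ is coercive. This yields, for a.e.\ $\theta\in[1-\epsilon,1]$, a bounded sequence $\{v_n\}\subset S_c^+$ satisfying $E_\theta(v_n)\to m_\theta$ and $(E_\theta|_{S_c^+})'(v_n)\to 0$ in $H^{-1}(\Omega)$.

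Next I would pass to the weak limit. Boundedness gives $v_n\rightharpoonup u_\theta$ in $H_0^1(\Omega)$ along a subsequence, and the Rellich--Kondrachov theorem yields strong convergence in $L^r(\Omega)$ for every $r<2^\ast$, in particular in $L^2$ and $L^p$. A computation analogous to the one in the proof of Theorem~\ref{thmB.2} produces Lagrange multipliers $\lambda_n$ with $E_\theta'(v_n)-\lambda_n v_n^+\to 0$ in $H^{-1}(\Omega)$; testing against a fixed $\psi\in H_0^1(\Omega)$ with $\int_\Omega u_\theta^+\psi\,dx\neq 0$ identifies $\lambda_n\to\lambda_\theta$. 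The critical term $|v_n^+|^{2^\ast-2}v_n^+$ converges almost everywhere and is bounded in $L^{2^\ast/(2^\ast-1)}(\Omega)$, so it converges weakly to $|u_\theta^+|^{2^\ast-2}u_\theta^+$, and passing to the limit in the Euler--Lagrange equation shows that $u_\theta$ is a weak solution of \eqref{eq theta1} when $a\leq 0$ or of \eqref{eq theta2} when $a>0$, with multiplier $\lambda_\theta$.

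The hard step is upgrading weak to strong convergence in $H_0^1(\Omega)$, which is exactly where the Sobolev critical exponent bites. Setting $w_n:=v_n-u_\theta$ and combining the Br\'ezis--Lieb lemma with the fact that $u_\theta$ solves the PDE, I would derive
\[
\int_\Omega|\nabla w_n|^2\,dx - \theta\int_\Omega|w_n^+|^{2^\ast}\,dx \;=\; o_n(1),
\]
so that either $w_n\to 0$ strongly in $H_0^1$, or, via the Sobolev inequality, $\liminf_{n\to\infty}\int_\Omega|\nabla w_n|^2\,dx\geq \theta^{-(N-2)/2}\mathcal{S}^{N/2}$. A further Br\'ezis--Lieb decomposition of the energy then forces, in the second alternative,
\[
m_\theta \;\geq\; E_\theta(u_\theta)+\tfrac{1}{N}\theta^{-(N-2)/2}\mathcal{S}^{N/2}.
\]
Ruling this out is precisely the content of the upper bound on $m_\theta$ proved in Proposition~\ref{Estimate of the M-P level}, which is where the restriction $N\in\{3,4,5\}$ (and, when $a<0$, $p<2^\ast-1$) appears via the balance between the Aubin--Talenti bubble and the lower-order term. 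This is the main obstacle of the argument. Granted this upper bound, strong convergence follows, whence $u_\theta\in S_c^+$ and $E_\theta(u_\theta)=m_\theta$.

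It remains to verify positivity and membership in $\mathcal{G}_\theta$. Testing the Euler--Lagrange equation against $u_\theta^-$ yields $\int_\Omega|\nabla u_\theta^-|^2\,dx=0$, so $u_\theta\geq 0$, and the strong maximum principle upgrades this to $u_\theta>0$ in $\Omega$. Finally, applying the Pohozaev identity \eqref{pohozaev} (adapted to the family $E_\theta$) to $u_\theta$ on the star-shaped domain $\Omega$ produces a strictly positive boundary contribution $\tfrac12\int_{\partial\Omega}|\nabla u_\theta|^2\,\sigma\cdot n\,d\sigma>0$, which is exactly the strict inequality defining $\mathcal{G}_\theta$. Hence $u_\theta\in\mathcal{G}_\theta$, completing the plan.
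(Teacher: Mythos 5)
Your proposal is correct and follows essentially the same route as the paper's proof: the monotonicity trick of Theorem~\ref{monotonicity trick} with the uniform geometry of Lemma~\ref{M-P geometry}, the Br\'ezis--Lieb splitting $\int_\Omega|\nabla w_n|^2dx=\theta\int_\Omega|w_n|^{2^\ast}dx+o_n(1)$ with the dichotomy $l=0$ or $l\geq\mathcal{S}^{N/2}\theta^{-N/2}$, exclusion of bubbling via Proposition~\ref{Estimate of the M-P level}, and positivity plus the Pohozaev identity at the end. One point to tighten: Proposition~\ref{Estimate of the M-P level} bounds only $m_1=m(c)$, not $m_\theta$, so to exclude $m_\theta\geq E_\theta(u_\theta)+\frac{1}{N}\theta^{-(N-2)/2}\mathcal{S}^{N/2}$ you must first apply the Pohozaev identity to the weak limit to get $u_\theta\in\mathcal{G}_\theta$, hence $E_\theta(u_\theta)\geq\nu_{c,\theta}$ (i.e., this step is needed before, not only after, the compactness argument), and then transfer the $\theta=1$ estimate uniformly to $\theta$ near $1$ via Lemma~\ref{continuity} and \eqref{theta3}, exactly as in the paper's inequalities \eqref{x1}--\eqref{x2}.
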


\textit{Proof.  } We apply Theorem \ref{monotonicity trick} with
$$
W = H_0^1(\Omega), \quad H = L^2(\Omega), \quad P = \{u \in L^2(\Omega): u \geq 0\},
$$
$$
A(u) = \frac{1}{2}\int_{\Omega}|\nabla u|^{2}dx - \frac{a}{p}\int_{\Omega}|u^+|^pdx, \quad B(u) = \frac{1}{2^\ast}\int_{\Omega}|u^+|^{2^\ast}dx,
$$
when $a \leq 0$ and
$$
A(u) = \frac{1}{2}\int_{\Omega}|\nabla u|^{2}dx, \quad B(u) = \frac{1}{2^\ast}\int_{\Omega}|u^+|^{2^\ast}dx + \frac{a}{p}\int_{\Omega}|u^+|^pdx,
$$
when $a > 0$. Together with Lemma \ref{M-P geometry}, we have for almost every $\theta \in [1-\epsilon,1]$, there exists a bounded (PS) sequence $\{u_n\} \subset S_c^+$ satisfying $E_\theta(u_n) \rightarrow m_\theta$ and $(E_\theta|_{S_c^+})'(u_n) \rightarrow 0$ as $n \to \infty$. Up to a subsequence, we assume that
\begin{equation*}
\begin{aligned}
&u_n \rightharpoonup u_\theta \quad \text{ weakly in } H_0^1(\Omega), \\
&u_n \rightharpoonup u_\theta \quad \text{ weakly in } L^{2^\ast}(\Omega),\\
&u_n \to u_\theta \quad \text{ strongly in } L^r(\Omega) \text{ for } 2\leq r <2^\ast,\\
&u_n \to u_\theta \quad \text{ almost everywhere in } \Omega.
\end{aligned}
\end{equation*}
It can be verified that $u_\theta \in S_c^+$ is a critical point of $E_\theta$ constrained on $S_c^+$. Let $w_n = u_n - u_\theta$. Since $(E_\theta|_{S_c^+})'(u_n) \rightarrow 0$, there exists $\lambda_n$ such that $E'_\theta(u_n) - \lambda_nu_n^+ \to 0$. Let $\lambda_\theta$ be the Lagrange multiplier correspond to $u_\theta$. Similar to the proof to Theorem \ref{thmB.2}, we have
\begin{equation*}
\int_{\Omega}|\nabla w_n|^2dx = \theta\int_{\Omega}|w_n|^{2^\ast}dx + o_n(1).
\end{equation*}
Hence we assume that $\int_{\Omega}|\nabla w_n|^2dx \to \theta l \geq 0, \int_{\Omega}|w_n|^{2^\ast}dx \to l \geq 0$. From the definition of $\mathcal{S}$ we deduce that
$$
\int_{\Omega}|\nabla w_n|^2dx \geq \mathcal{S}\left( \int_{\Omega}|w_n|^{2^\ast}dx\right) ^{\frac{2}{2^\ast}},
$$
implying $\theta l \geq \mathcal{S}l^{2/2^\ast}$. We claim that $l = 0$. Suppose on the contrary that  $l > 0$, then $l \geq \mathcal{S}^{N/2}\theta^{-N/2}$, implying that
$$
E_\theta(w_n) \geq \frac{1}{N}\mathcal{S}^{\frac{N}{2}}\theta^{\frac{2-N}{2}} + o_n(1).
$$
Further, using the Br\'{e}zis-Lieb Lemma (see \cite{BL}) one gets
\begin{align} \label{x1}
E_\theta(u_n) = E_\theta(u_\theta) + E_\theta(w_n) + o_n(1) \geq \nu_{c,\theta} + \frac{1}{N}\mathcal{S}^{\frac{N}{2}}\theta^{\frac{2-N}{2}} + o_n(1).
\end{align}

On the other hand, using Proposition \ref{Estimate of the M-P level} which will be proved in the next section, \eqref{theta3} and Lemma \ref{continuity}, we have
\begin{align*}
m_\theta = m_1 + o_\theta(1) < E(u_c) + \frac{1}{N}\mathcal{S}^{\frac{N}{2}} + o_\theta(1) = \nu_{c,\theta} + \frac{1}{N}\mathcal{S}^{\frac{N}{2}}\theta^{\frac{2-N}{2}} + o_\theta(1).
\end{align*}
By taking $\epsilon$ small enough such that $\theta$ close to $1^-$ for all $\theta \in (1-\epsilon,1)$, we obtain
\begin{align} \label{x2}
m_\theta + \xi < \nu_{c,\theta} + \frac{1}{N}\mathcal{S}^{\frac{N}{2}}\theta^{\frac{2-N}{2}},
\end{align}
for some $\xi > 0$ independent of $\theta \in (1-\epsilon,1)$. Combining \eqref{x1}, \eqref{x2} and that $\lim\limits_{n \to \infty}E_\theta(u_n) = m_\theta$, we get a self-contradictory inequality
\begin{align*}
\nu_{c,\theta} + \frac{1}{N}\mathcal{S}^{\frac{N}{2}}\theta^{\frac{2-N}{2}} \leq m_\theta < m_\theta + \xi < \nu_{c,\theta} + \frac{1}{N}\mathcal{S}^{\frac{N}{2}}\theta^{\frac{2-N}{2}}.
\end{align*}
Thus we prove the claim and so $l = 0$. This shows $u_n \to u_\theta$ strongly in $H_0^1(\Omega)$. Hence, $u_\theta$ is a critical point of $E_\theta$ constrained on $S_c^+$ at the level $m_\theta$.

By Lagrange multiplier principle, $u_\theta$ satisfies
\begin{equation*}
-\Delta u_\theta  = \lambda_\theta u_\theta^+ + \theta|u_\theta^+|^{2^\ast-2}u_\theta^+ + a|u_\theta^+|^{p-2}u_\theta^+ \quad \text{in } \Omega,
\end{equation*}
when $a \leq 0$ and
\begin{equation*}
-\Delta u_\theta  = \lambda_\theta u_\theta^+ + \theta|u_\theta^+|^{2^\ast-2}u_\theta^+ + a\theta|u_\theta^+|^{p-2}u_\theta^+ \quad \text{in } \Omega,
\end{equation*}
when $a > 0$. Next we prove that $u_\theta$ is positive. Multiplying $u_\theta^-$ for the equation of $u_\theta$ and integrating on $\Omega$ yield that
$$
\int_\Omega|\nabla u_\theta^-|^2dx = 0,
$$
implying that $u_\theta \geq 0$. By strong maximum principle, $u_\theta > 0$ and thus solves \eqref{eq theta1} when $a \leq 0$ and \eqref{eq theta2} when $a > 0$.

Finally, using the Pohozaev identity and that $\Omega$ is star-shaped with respect to $0$, we know $u_\theta \in \mathcal{G}_\theta$ and complete the proof.
\qed\vskip 5pt

Now we are prepared to prove Proposition \ref{bounded (PS) sequence}.

\begin{proof}[Proof to Proposition \ref{bounded (PS) sequence}] By Proposition \ref{Positive solutions for almost every theta}, we can take $\theta_n \to 1^-$ and $u_n = u_{\theta_n}$ solving \eqref{eq theta1} when $a \leq 0$ and \eqref{eq theta2} when $a > 0$. We aim to show that $\{u_n\}$ is bounded in $H_0^1(\Omega)$ firstly. By using Lemma \ref{continuity} and taking $n$ large such that $\theta_n$ close to $1^-$ enough, we have $m_{\theta_n} \leq 2m_1$. When $a \leq 0$, further using the fact that $u_{\theta_n} \in \mathcal{G}_{\theta_n}$ (similar to \eqref{v1}), we obtain for $n$ large such that $p\gamma_p < 2^\ast\theta_n$,
	\begin{align*}
		2m_1 \geq m_{\theta_n} = E_{\theta_n}(u_n) & > \left( \frac12-\frac1{2^\ast\theta_n}\right)  \int_{\Omega}|\nabla u_n|^2dx + a\left( \frac{\gamma_p}{2^\ast\theta_n}-\frac{1}{p}\right) \int_{\Omega}|u_n^+|^{p} dx \nonumber \\
		& \geq \left( \frac12-\frac1{2^\ast\theta_n}\right) \int_{\Omega}|\nabla u_n|^2dx,
	\end{align*}
yielding the $H_0^1(\Omega)$-boundedness of $\{u_n\}$. When $a > 0$ and $2+4/N < p < 2^\ast$, further using the fact that $u_{\theta_n} \in \mathcal{G}_{\theta_n}$ (similar to \eqref{v2}), we obtain for $n$ large such that $p\gamma_p\theta_n > 2$,
\begin{align*}
2m_1 \geq m_{\theta_n} = E_{\theta_n}(u_n) > \left( \frac12 - \frac{1}{p\gamma_p\theta_n}\right)  \int_{\Omega}|\nabla u_n|^2dx,
\end{align*}
yielding the $H_0^1(\Omega)$-boundedness of $\{u_n\}$.

Since $\{u_n\}$ is $H_0^1(\Omega)$-bounded and $E_{\theta_n}(u_n) = m_{\theta_n}$, by using Lemma \ref{continuity} one gets
\begin{align*}
\lim_{n \to \infty}E(u_n) = \lim_{n \to \infty}E_{\theta_n}(u_n) = \lim_{n \to \infty}m_{\theta_n} = m_1.
\end{align*}
Note that $m_1 = m(c)$. Moreover, using again that $\{u_n\}$ is $H_0^1(\Omega)$-bounded, we have
$$
(E|_{S_c^+})'(u_n) = (E_{\theta_n}|_{S_c^+})'(u_n) + o_n(1) \to 0 \text{ as } n \to \infty.
$$
The proof is complete.
\end{proof}

\section{Proof to Theorem \ref{thmB.5}} \label{estimate}

Before proving Theorem \ref{thmB.5}, we firstly provide the estimation of $m(c)$.

\begin{proposition}\label{Estimate of the M-P level}
	Under the assumptions of Theorem \ref{thmB.2}, we further suppose that one of the following holds:
	\begin{itemize}
		\item[(1)] $a = 0, N \geq 3$;
		\item[(2)] $a < 0, N \in \{3,4,5\}, p < 2^\ast - 1$;
		\item[(3)] $a > 0, N \in \{3,4,5\}$.
	\end{itemize}
	Then
	\begin{align} \label{estmplev}
	m(c) < \nu_c + \frac{1}{N}\mathcal{S}^{N/2},
	\end{align}
	where $\nu_c$ is the energy level of the normalized ground state defined by Theorem \ref{thmB.2}.
\end{proposition}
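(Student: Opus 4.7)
The plan is to exhibit an admissible path $\gamma_\epsilon \in \Gamma$ whose maximum energy is strictly less than $\nu_c + \frac{1}{N}\mathcal{S}^{N/2}$, by pushing $u_c$ in the direction of a truncated Aubin--Talenti bubble concentrated at an interior point of $\Omega$. Fix $x_0$ in the interior of $\Omega$, choose a cut-off $\eta \in C_c^\infty(\Omega)$ equal to $1$ near $x_0$, and set $u_\epsilon(x) := \eta(x) U_\epsilon(x - x_0)$, where $U_\epsilon$ denotes the standard Aubin--Talenti extremal for $\mathcal{S}$ with concentration $\epsilon > 0$. I would use the classical asymptotics $\|\nabla u_\epsilon\|_2^2 = \mathcal{S}^{N/2} + O(\epsilon^{N-2})$, $\|u_\epsilon\|_{2^*}^{2^*} = \mathcal{S}^{N/2} + O(\epsilon^N)$, $\|u_\epsilon\|_2^2 \sim \epsilon$ for $N = 3$, $\sim \epsilon^2|\log \epsilon|$ for $N = 4$, $\sim \epsilon^2$ for $N \geq 5$, and $\|u_\epsilon\|_p^p \sim \epsilon^{N - (N-2)p/2}$ in the subcritical range.

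For the path, I would take $\gamma_\epsilon(t) := \rho_\epsilon(t)(u_c + t u_\epsilon)$ with $\rho_\epsilon(t) := \sqrt{c/\|u_c + t u_\epsilon\|_2^2}$, so that $\gamma_\epsilon(t) \in S_c^+$ by construction (positivity is automatic since $u_c, u_\epsilon \geq 0$). Since $E(\gamma_\epsilon(t)) \to -\infty$ as $t \to \infty$, some $T_\epsilon$ yields $E(\gamma_\epsilon(T_\epsilon)) < \nu_c$, placing that endpoint in $S_c^+ \setminus \mathcal{G}$. After reparametrizing onto $[0, 1]$, I would concatenate with a continuous arc in $\{u \in S_c^+ : E(u) < \nu_c\}$ joining $\gamma_\epsilon(T_\epsilon)$ to the fixed endpoint $v$ of Lemma \ref{M-P geometry}; such an arc exists because both endpoints can be steered into the deep-well region $\{E \ll 0\}$ via the scaling $u \mapsto u^s$ ($s \gg 1$), and that region is path-connected via linear interpolations of suitably dilated functions.

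The key expansion uses the Euler--Lagrange identity $\int \nabla u_c \cdot \nabla u_\epsilon = \lambda_c \int u_c u_\epsilon + \int u_c^{2^*-1} u_\epsilon + a \int u_c^{p-1} u_\epsilon$ (integration by parts) together with Br\'ezis--Lieb-type expansions for $\|u_c + t u_\epsilon\|_{2^*}^{2^*}$ and $\|u_c + t u_\epsilon\|_p^p$. Using $\rho_\epsilon(t)^2 \|u_c + t u_\epsilon\|_2^2 = c$ to cancel the linear-in-$t$ increments, one arrives at
\begin{align*}
E(\gamma_\epsilon(t)) - \nu_c \le \frac{t^2}{2} \|\nabla u_\epsilon\|_2^2 - \frac{t^{2^*}}{2^*} \|u_\epsilon\|_{2^*}^{2^*} - \frac{\lambda_c t^2}{2} \|u_\epsilon\|_2^2 + \mathcal{R}_\epsilon(t),
\end{align*}
where $\mathcal{R}_\epsilon$ collects the truncation error of order $\epsilon^{N-2}$ and, when $a \neq 0$, an additional subcritical remainder of order $\epsilon^{N - (N-2)p/2}$. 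The principal part attains its maximum $\frac{1}{N}\mathcal{S}^{N/2}$ near $t = 1$, so the proof reduces to showing that $\sup_t |\mathcal{R}_\epsilon(t)| < \frac{\lambda_c}{2}\|u_\epsilon\|_2^2$ for $\epsilon$ small enough.

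The main obstacle, and the source of the dimensional restrictions, is the exponent bookkeeping in this last comparison. For $a = 0$ one has $\lambda_c > 0$ by Theorem \ref{thmB.2}, and the correction $\frac{\lambda_c}{2}\|u_\epsilon\|_2^2$ beats $O(\epsilon^{N-2})$ for every $N \geq 4$; the borderline $N = 3$ is handled by a Br\'ezis--Nirenberg-type refinement exploiting $u_c(x_0) > 0$. For $a < 0$ the extra subcritical remainder of order $\epsilon^{N - (N-2)p/2}$ must be dominated by $\|u_\epsilon\|_2^2$, which after comparison of exponents forces $N \in \{3, 4, 5\}$, together with $p < 2^* - 1$ to secure a favourable sign for the $u_c^{p-1}u_\epsilon$ cross term in the defocusing case. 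For $a > 0$ the Lagrange multiplier $\lambda_c$ may fail to be positive, and the favourable correction is instead read off from the full second variation $\tilde{E}''(u_c)[u_\epsilon, u_\epsilon]$, whose strict negativity (relative to the principal bubble energy) is ensured by $\lambda_c < \lambda_1(\Omega)$ combined with the positive contributions $(2^*-1)\int u_c^{2^*-2}u_\epsilon^2$ and $a(p-1)\int u_c^{p-2}u_\epsilon^2$ arising from $u_c > 0$; the same exponent bookkeeping again imposes $N \in \{3, 4, 5\}$. Combining the resulting bound on the first segment with the extension below $\nu_c$ yields $m(c) < \nu_c + \frac{1}{N}\mathcal{S}^{N/2}$.
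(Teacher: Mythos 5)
Your global architecture (truncated bubble, a path through $u_c + t\,u_\epsilon$ renormalized onto $S_c^+$, a strict level comparison, then a dilation segment reaching an endpoint below $\nu_c$) matches the paper, and your observation that the scalar renormalization $\rho_\epsilon(t)(u_c+tu_\epsilon)$ cancels the linear-in-$t$ increments and produces the second-order correction $-\frac{\lambda_c t^2}{2}\|u_\epsilon\|_2^2$ is a correct computation. The problem is your identification of the dominant terms, and with it the entire closing step. The remainder $\mathcal{R}_\epsilon$ is \emph{not} merely $O(\epsilon^{N-2})+O(\epsilon^{N-(N-2)p/2})$: the expansion of $\|u_c+tu_\epsilon\|_{2^*}^{2^*}$ contains interaction terms of order $\epsilon^{(N-2)/2}$, most importantly the favorable term $-t^{2^*-1}\int_\Omega u_c u_\epsilon^{2^*-1}dx$, which for $N\in\{3,4,5\}$ strictly dominates $\|u_\epsilon\|_2^2$ (of size $\epsilon$, $\epsilon^2|\ln\epsilon|$, $\epsilon^2$ respectively). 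In the paper this interaction term — isolated via the pointwise inequality $(1+b)^r\ge 1+rb^{r-1}+b^r$, see \eqref{3}, and bounded below by Lemma \ref{lem es4} with a constant proportional to $\inf_{B_\delta}u_c$, hence \emph{independent of the cutoff radius} $R$ — is the engine of the strict inequality in all three cases: every unfavorable term of the same order ($\lambda_c s\int u_cv_\epsilon$, $s\int u_c^{2^*-1}v_\epsilon$, the $a$-cross terms, the normalization correction) is $O(R^2\epsilon^{(N-2)/2})$ and is absorbed by first shrinking $R$, a degree of freedom your scheme never exploits. You invoke this mechanism only as an afterthought for the "borderline" case $a=0$, $N=3$, whereas it is the central device.

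Concretely, your closing comparison $\sup_t|\mathcal{R}_\epsilon(t)|<\frac{\lambda_c}{2}\|u_\epsilon\|_2^2$ is quantitatively impossible in case (2): for $a<0$ the defocusing penalty is of order $\int_\Omega u_\epsilon^p\,dx\sim\epsilon^{N-(N-2)p/2}$, and for $N=4,5$ one has $N-(N-2)p/2<2$ for \emph{every} $p>2$, so this term strictly dominates $\|u_\epsilon\|_2^2$ and no choice of $\epsilon$ rescues the inequality. The correct comparison is against the interaction scale $\epsilon^{(N-2)/2}$: requiring $N-(N-2)p/2>(N-2)/2$ is exactly $p<2^*-1$ (which also forces $N\le 5$) — so your "favourable sign of the cross term" explanation of that hypothesis is not what it is for. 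In case (3), you are right that the sign of $\lambda_c$ is unknown, but the second-variation negativity you appeal to is unsubstantiated ($\lambda_c<\lambda_1(\Omega)$ gives no control against a concentrating $u_\epsilon$) and unnecessary: the paper absorbs the possibly unfavorable normalization correction, which is of order $\|v_\epsilon\|_2^2$ (see \eqref{7}), into $o(\epsilon^{(N-2)/2})$, and this is precisely where $N\in\{3,4,5\}$ enters when $a>0$. Two further differences worth noting: the paper normalizes by the dilation $W_{\epsilon,s}=\mu^{(N-2)/2}w_{\epsilon,s}(\mu x)$ with $\mu\ge1$ (admissible by star-shapedness), which leaves the gradient and critical terms invariant so that the whole normalization cost sits in the single $L^p$ term — your scalar renormalization is workable but makes this bookkeeping heavier; and the paper avoids your unproved sublevel-connectivity lemma by choosing $\widehat{s}$ so large that $\psi'(k)<0$ for all $k>1$, making the dilation segment itself the admissible tail of the path. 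As written, cases (2) and (3) of your argument fail.
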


Let $\xi \in C_0^\infty(\Omega)$ be the radial function, such that $\xi(x) \equiv 1$ for $0 \leq |x| \leq R$, $0 \leq \xi(x) \leq 1$ for $R \leq
|x| \leq 2R$, $\xi(x) \equiv 0$ for $|x| \geq 2R$, where $B_{2R} \subset \Omega$. Take $v_\epsilon = \xi U_\epsilon$ where
$$
U_\epsilon = [N(N-2)]^{(N-2)/4}\left(\frac{\epsilon}{\epsilon^2+|x|^2}\right)^{(N-2)/2}.
$$
\begin{lemma}[Lemma 3.1 in \cite{DHPZ}] \label{lem es1}
	If $N \geq 4$, then we have, as $\epsilon \to 0^+$,
	\begin{equation*}
		\begin{aligned}
			\int_{\Omega} |\nabla v_\varepsilon|^2dx =\mathcal{S}^{\frac{N}{2}}+O(\varepsilon^{N-2}), \quad  	\int_{\Omega} |v_\varepsilon|^{2^\ast}dx =\mathcal{S}^{\frac{N}{2}}+O(\varepsilon^N).
		\end{aligned}
	\end{equation*}
	\begin{equation*}
		\int_{\Omega}|v_\varepsilon|^2dx =
		\left\{
		\begin{array}{cc}
			d\varepsilon^2|\ln \varepsilon| +O(\varepsilon^2), & N = 4, \\
			d\varepsilon^2 +O(\varepsilon^{N-2}), & N \geq 5,
		\end{array}
		\right.
	\end{equation*}
	where $d$ is a positive constant.
\end{lemma}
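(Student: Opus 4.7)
The plan is to exploit the scale invariance of the Aubin--Talenti bubble $U_\epsilon$, which satisfies
\begin{align*}
\int_{\mathbb{R}^N} |\nabla U_\epsilon|^2 dx = \int_{\mathbb{R}^N} U_\epsilon^{2^\ast} dx = \mathcal{S}^{N/2},
\end{align*}
independently of $\epsilon$. For each of the three integrals I will split $\Omega$ into the inner ball $B_R$ where $\xi \equiv 1$ (so $v_\epsilon = U_\epsilon$), the annulus $A := B_{2R}\setminus B_R$ where $0 \leq \xi \leq 1$, and the exterior $\Omega \setminus B_{2R}$ where $v_\epsilon \equiv 0$. This reduces the proof to (i) writing $\int_{B_R}$ as the full-space integral minus a tail over $\mathbb{R}^N\setminus B_R$, and (ii) controlling the annulus piece via the pointwise bounds $U_\epsilon(x)\lesssim \epsilon^{(N-2)/2}$ and $|\nabla U_\epsilon(x)|\lesssim \epsilon^{(N-2)/2}$, which hold uniformly on $\{|x|\geq R\}$.

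For the Dirichlet energy I will expand
\begin{align*}
|\nabla v_\epsilon|^2 = \xi^2 |\nabla U_\epsilon|^2 + 2\xi U_\epsilon\,\nabla\xi\cdot\nabla U_\epsilon + U_\epsilon^2|\nabla\xi|^2
\end{align*}
and use the pointwise bound $|\nabla U_\epsilon(x)|^2 \lesssim \epsilon^{N-2}|x|^{2-2N}$ on $|x|\geq R$ to show
\begin{align*}
\int_{\mathbb{R}^N\setminus B_R} |\nabla U_\epsilon|^2 dx \lesssim \epsilon^{N-2}\int_R^\infty r^{1-N} dr = O(\epsilon^{N-2}),
\end{align*}
while the cross and quadratic terms supported on $A$ are also $O(\epsilon^{N-2})$ thanks to the pointwise bounds above. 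An analogous split, combined with $U_\epsilon^{2^\ast}(x) \lesssim \epsilon^N|x|^{-2N}$ on $|x|\geq R$, gives $\int_{\mathbb{R}^N\setminus B_R} U_\epsilon^{2^\ast} dx = O(\epsilon^N)$, which delivers the second claim.

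For the $L^2$ norm, only $\int_{B_R} U_\epsilon^2 dx$ contributes to leading order, since the annulus part is dominated by $U_\epsilon^2 \lesssim \epsilon^{N-2}|x|^{-2(N-2)}$ and contributes at most $O(\epsilon^{N-2})$. Passing to polar coordinates and rescaling $r = \epsilon s$,
\begin{align*}
\int_{B_R} U_\epsilon^2 dx = C_N\,\epsilon^2 \int_0^{R/\epsilon} \frac{s^{N-1}}{(1+s^2)^{N-2}} ds,
\end{align*}
whose integrand behaves like $s^{3-N}$ as $s\to\infty$. In dimension $N = 4$ this is $s^{-1}$, producing the logarithmic factor $|\ln\epsilon| + O(1)$ and hence $d\epsilon^2|\ln\epsilon| + O(\epsilon^2)$. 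For $N \geq 5$ the improper integral $\int_0^\infty s^{N-1}(1+s^2)^{-(N-2)} ds$ converges to a finite constant $d/C_N$, and the tail estimate $\int_{R/\epsilon}^\infty s^{3-N} ds = O(\epsilon^{N-4})$ yields $d\epsilon^2 + O(\epsilon^{N-2})$.

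The argument is essentially a careful bookkeeping of tails on the truncation scale $R$ together with the rescaling $r = \epsilon s$, so there is no genuinely difficult step. The one place where care is needed is the dimension-dependent behavior of the $L^2$ integral: the logarithm appears precisely when the rescaled integrand sits on the borderline of integrability at infinity, i.e.\ when $3 - N = -1$, which is exactly $N = 4$. The remainder exponents $O(\epsilon^{N-2})$ and $O(\epsilon^N)$ in the first two identities are then read off directly from the explicit polynomial decay of $U_\epsilon$ and $\nabla U_\epsilon$ at infinity, after restricting to $|x|\geq R$.
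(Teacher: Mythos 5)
Your proof is correct: the paper itself gives no argument for this lemma (it is quoted verbatim from Lemma 3.1 of \cite{DHPZ}), and your computation is exactly the standard one used there and going back to Br\'ezis--Nirenberg, namely splitting off the region where $\xi\equiv 1$, comparing with the full-space integrals of the Aubin--Talenti bubble, and bookkeeping the tails via the pointwise decay $U_\epsilon\lesssim\epsilon^{(N-2)/2}|x|^{2-N}$, $|\nabla U_\epsilon|\lesssim\epsilon^{(N-2)/2}|x|^{1-N}$ together with the rescaling $r=\epsilon s$ for the $L^2$ term. All exponents check out, including the borderline $s^{3-N}=s^{-1}$ behavior producing the logarithm precisely at $N=4$ and the tail $O(\epsilon^{N-4})$ giving the $O(\epsilon^{N-2})$ remainder for $N\geq 5$.
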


\begin{lemma}[Lemma 3.6 in \cite{DHPZ}] \label{lem es2}
	If $N = 3$, further assume $4R^2 < 1$, then we have, as $\epsilon \to 0^+$,
	\begin{equation*}
		\begin{aligned}
			\int_{\Omega} |\nabla v_\varepsilon|^2dx =\mathcal{S}^{\frac{3}{2}}+ \sqrt{3}w_3\int_R^{2R}|\xi'(r)|^2dr \epsilon + O(\varepsilon^3), \quad  	\int_{\Omega} |v_\varepsilon|^{2^\ast}dx =\mathcal{S}^{\frac{3}{2}}+O(\varepsilon^3).
		\end{aligned}
	\end{equation*}
	\begin{equation*}
		\int_{\Omega}|v_\varepsilon|^2dx = \sqrt{3}\omega_3\int_0^{2R}|\xi(r)|^2dr \epsilon + O(\varepsilon^2)
	\end{equation*}
	where $\omega_3$ denotes the area of the unit sphere surface.
\end{lemma}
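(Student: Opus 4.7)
The plan is to construct an explicit test path in $\Gamma$ obtained by attaching a truncated Aubin-Talenti bubble $v_\epsilon$ to the ground state $u_c$, and to estimate its maximal energy using the asymptotic expansions of Lemmas \ref{lem es1}-\ref{lem es2}. The threshold $\tfrac{1}{N}\mathcal{S}^{N/2}$ is exactly the intrinsic Sobolev energy of a concentrating bubble, so the question is whether gluing the bubble to $u_c$ and renormalizing the $L^2$-mass leaves us strictly below this level.

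For the path, by the strong maximum principle $u_c>0$ in $\Omega$, so after translation fix $0\in\Omega$ and $R>0$ with $B_{2R}(0)\subset\Omega$ and $u_c$ bounded below on $\overline{B_{2R}(0)}$; use $v_\epsilon$ supported in $B_{2R}(0)$. Setting
\[
g_{s,\tau}(x):=\tau^{N/2}\bigl(u_c(\tau x)+s\,v_\epsilon(\tau x)\bigr),\qquad \tilde g_{s,\tau}:=\frac{\sqrt c\,g_{s,\tau}}{\|g_{s,\tau}^+\|_2}\in S_c^+,
\]
and using the same scaling argument as in Lemma \ref{M-P geometry}, $E(\tilde g_{s,\tau})\to-\infty$ as $\tau\to\infty$ uniformly in $s\in[0,s_0]$. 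Concatenating $s\mapsto\tilde g_{s,1}$ with $\tau\mapsto\tilde g_{s_0,\tau}$ and a descending arc to the endpoint $v$ from Lemma \ref{M-P geometry} yields a continuous path $\gamma_\epsilon\in\Gamma$ whose supremal energy, for $\tau_0$ large enough, is attained on the first segment. The task therefore reduces to bounding $\sup_{s\in[0,s_0]}E(\tilde g_{s,1})$.

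For the energy expansion, write $\tilde u_s:=\tilde g_{s,1}=\alpha(s)(u_c+sv_\epsilon)$ with $\alpha(s)=\sqrt c/\|u_c+sv_\epsilon\|_2$. Testing \eqref{eq1.1} against $v_\epsilon$ gives the identity $\int\nabla u_c\cdot\nabla v_\epsilon=\lambda_c\int u_cv_\epsilon+\int u_c^{2^*-1}v_\epsilon+a\int u_c^{p-1}v_\epsilon$; combined with the Taylor expansions of $\alpha(s)^2$, $(u_c+sv_\epsilon)^{2^*}$ and $(u_c+sv_\epsilon)^p$, every linear-in-$s$ contribution cancels and one obtains
\[
E(\tilde u_s)-\nu_c=\frac{s^2}{2}\|\nabla v_\epsilon\|_2^2-\frac{s^{2^*}}{2^*}\|v_\epsilon\|_{2^*}^{2^*}-\frac{\lambda_c s^2}{2}\|v_\epsilon\|_2^2+\mathcal R_\epsilon(s),
\]
where $\mathcal R_\epsilon(s)$ gathers the quadratic cross integrals $\int u_c^{2^*-2}v_\epsilon^2$, $\int u_c^{p-2}v_\epsilon^2$ and the higher-order remainder of $\alpha(s)$, each of which is bounded by a multiple of $\|v_\epsilon\|_2^2$ plus subordinate powers of $\epsilon$. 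Optimizing the pure bubble part in $s$ yields $\tfrac{1}{N}\mathcal{S}^{N/2}+O(\epsilon^{N-2})$ (respectively $O(\epsilon)$ for $N=3$) by Lemmas \ref{lem es1}-\ref{lem es2}, while the negative correction $-\tfrac{\lambda_c}{2}\|v_\epsilon\|_2^2$ is of order $\epsilon,\ \epsilon^2|\ln\epsilon|,\ \epsilon^2$ for $N=3,4,5$. Hence this term dominates the positive remainder precisely when $N\in\{3,4,5\}$ (the cutoff $\xi$ being tuned in the $N=3$ case so that $\int_0^{2R}|\xi|^2\gg\int_R^{2R}|\xi'|^2$), giving the strict inequality. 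When $a\leq 0$ the sign is provided by $\lambda_c>0$ from Theorem \ref{thmB.2}; when $a>0$, the analogous negative contribution arises from the expansion of $-\tfrac{a}{p}\|\tilde u_s\|_p^p$ combined with $\lambda_c<\lambda_1(\Omega)$.

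The main obstacle is the sharp accounting of $\mathcal R_\epsilon(s)$, in particular of the bilinear cross terms $\int u_c^{2^*-2}v_\epsilon^2$ and, in cases (2)-(3), $\int u_c^{p-2}v_\epsilon^2$, which are of the same order as the main negative correction $-\lambda_c\|v_\epsilon\|_2^2$ and must combine with it with the correct sign. The condition $p<2^*-1$ in case (2) is precisely what keeps $\int u_c^{p-1}v_\epsilon$ and its quadratic companion subordinate to $-\lambda_c\|v_\epsilon\|_2^2$; the absence of such a term in case (1) is why that case holds for every $N\geq 3$. The borderline dimension $N=3$ further requires a concrete choice of the cutoff profile, which is why Lemma \ref{lem es2} is stated with the explicit boundary coefficient $\sqrt 3\omega_3\int_R^{2R}|\xi'|^2$ rather than absorbed in an $O(\epsilon)$ remainder.
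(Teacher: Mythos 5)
There is a genuine gap, and it is a fundamental one: your proposal does not address the statement you were asked to prove. Lemma \ref{lem es2} is a purely computational assertion about the truncated bubble $v_\epsilon = \xi U_\epsilon$ in dimension $N=3$: three asymptotic expansions, with the explicit first-order coefficient $\sqrt{3}\,\omega_3\int_R^{2R}|\xi'(r)|^2dr$ in the gradient integral, an $O(\epsilon^3)$ remainder for the $L^{2^\ast}$ integral, and the leading term $\sqrt{3}\,\omega_3\,\epsilon\int_0^{2R}|\xi(r)|^2dr$ for the $L^2$ integral (the paper does not re-derive these but quotes them from Lemma 3.6 of \cite{DHPZ}). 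What you wrote is instead a sketch of the proof of Proposition \ref{Estimate of the M-P level}, i.e.\ of the mountain-pass bound $m(c) < \nu_c + \frac{1}{N}\mathcal{S}^{N/2}$: you construct the glued path through $u_c + s v_\epsilon$, renormalize the mass, expand the energy, and explicitly invoke ``Lemmas \ref{lem es1}--\ref{lem es2}'' as known inputs. Since the target statement is one of those inputs, your argument is circular as a proof of Lemma \ref{lem es2}; nowhere do you compute any of the three integrals $\int_\Omega|\nabla v_\epsilon|^2dx$, $\int_\Omega|v_\epsilon|^{2^\ast}dx$, $\int_\Omega|v_\epsilon|^{2}dx$. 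Your closing remark about tuning $\xi$ so that $\int_0^{2R}|\xi|^2 \gg \int_R^{2R}|\xi'|^2$ again concerns the \emph{application} of the expansions to $m(c)$, not the expansions themselves.

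A correct proof must carry out the $N=3$ computation directly (or cite \cite{DHPZ}). Concretely, with $U_\epsilon(r)=3^{1/4}\sqrt{\epsilon}\,(\epsilon^2+r^2)^{-1/2}$, expand $|\nabla(\xi U_\epsilon)|^2=\xi^2|\nabla U_\epsilon|^2+2\xi\xi' U_\epsilon U_\epsilon'+|\xi'|^2U_\epsilon^2$ and integrate radially. The feature that distinguishes $N=3$ is the slow tail $U_\epsilon\sim 3^{1/4}\sqrt{\epsilon}/r$, so the cutoff region contributes at order $\epsilon$ rather than $O(\epsilon^{N-2})$ with $N\ge4$: since $U_\epsilon^2=\sqrt{3}\,\epsilon\,r^{-2}\left(1+O(\epsilon^2/r^2)\right)$ on $R\le r\le 2R$, the term $\int|\xi'|^2U_\epsilon^2$ yields exactly $\sqrt{3}\,\omega_3\,\epsilon\int_R^{2R}|\xi'(r)|^2dr+O(\epsilon^3)$; moreover the order-$\epsilon$ contributions of the cross term, $-2\sqrt{3}\,\omega_3\,\epsilon\int_R^{2R}\xi\xi' r^{-1}dr$, and of the bulk deficit $-\omega_3\int_R^{\infty}(1-\xi^2)|U_\epsilon'|^2r^2dr=-\sqrt{3}\,\omega_3\,\epsilon\int_R^{\infty}(1-\xi^2)r^{-2}dr+O(\epsilon^3)$ cancel identically after one integration by parts (using $\xi(R)=1$), leaving $\mathcal{S}^{3/2}+\sqrt{3}\,\omega_3\,\epsilon\int_R^{2R}|\xi'|^2dr+O(\epsilon^3)$ with no intermediate $O(\epsilon^2)$ term. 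For the $L^6$ norm, $U_\epsilon^6\sim 3^{3/2}\epsilon^3r^{-6}$ is integrable at infinity, so the truncation costs only $O(\epsilon^3)$; for the $L^2$ norm, $\int_0^{2R}\xi^2U_\epsilon^2r^2dr=\sqrt{3}\,\epsilon\int_0^{2R}\xi^2dr-\sqrt{3}\,\epsilon^3\int_0^{2R}\xi^2(\epsilon^2+r^2)^{-1}dr$, and the second integral is $O(\epsilon^{-1})$, giving the stated $O(\epsilon^2)$ remainder. None of this bookkeeping appears in your submission, and it is precisely this bookkeeping (in particular the exact coefficient and the cancellation at order $\epsilon$) that constitutes the lemma.
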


\begin{lemma} \label{lem es3}
	If $N = 3$, then we have, as $\epsilon \to 0^+$,
	\begin{equation*}
		\int_{\Omega}|v_\varepsilon|^pdx \sim
		\left\{
		\begin{array}{cc}
			\epsilon^{3-\frac{p}{2}}, & 3 < p < 6, \\
			\varepsilon^{\frac{3}{2}}|\ln \varepsilon|, & p = 3, \\
			\varepsilon^{\frac{p}{2}}, & 2 < p < 3.
		\end{array}
		\right.
	\end{equation*}
	If $N \geq 4$, then we have, as $\epsilon \to 0^+$,
	\begin{equation*}
		\int_{\Omega}|v_\varepsilon|^pdx \sim \epsilon^{N-\frac{N-2}{2}p}, 2 < p < 2^\ast.
	\end{equation*}
\end{lemma}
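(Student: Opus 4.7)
The plan is to plug in the explicit formula for $U_\varepsilon$ and exploit the change of variables $x = \varepsilon y$. Writing $v_\varepsilon(x) = [N(N-2)]^{(N-2)/4}\xi(x)\varepsilon^{(N-2)/2}(\varepsilon^2 + |x|^2)^{-(N-2)/2}$, the integral becomes
\begin{equation*}
\int_{\Omega}|v_\varepsilon|^p dx = [N(N-2)]^{p(N-2)/4} \varepsilon^{N - p(N-2)/2} \int_{\Omega/\varepsilon} \xi(\varepsilon y)^p (1+|y|^2)^{-p(N-2)/2} dy,
\end{equation*}
so the asymptotics reduce to analyzing the rescaled integral. The integrand $(1+|y|^2)^{-p(N-2)/2}$ on $\mathbb{R}^N$ is integrable at infinity precisely when $p(N-2) > N$, i.e. $p > 2^\ast/2$. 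I would use this dichotomy to organize the cases.

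For $N \geq 4$, since $2^\ast/2 = N/(N-2) \leq 2 < p$, the limit integral $\int_{\mathbb{R}^N}(1+|y|^2)^{-p(N-2)/2}dy$ is finite. Because $\xi(\varepsilon y) \to 1$ pointwise and $0 \leq \xi \leq 1$, dominated convergence yields a finite positive limit for the rescaled integral, giving $\int_{\Omega}|v_\varepsilon|^p dx \sim \varepsilon^{N-(N-2)p/2}$. The case $N=3$, $3 < p < 6$ is identical: here $N/(N-2) = 3 < p$, the limit integral converges, and the same dominated convergence argument produces the factor $\varepsilon^{3-p/2}$.

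For $N = 3$ and $p \leq 3$ the scaled integral diverges, so I would instead work directly in polar coordinates and split $\Omega$ into $\{|x| \leq R\}$ (where $\xi \equiv 1$) and $\{R \leq |x| \leq 2R\}$ (where the integrand is bounded uniformly in $\varepsilon$ and contributes $O(1)$). In the first piece,
\begin{equation*}
\int_{|x|\leq R}(\varepsilon^2+|x|^2)^{-p/2}dx = \omega_3 \int_0^{R/\varepsilon} \frac{t^2}{(1+t^2)^{p/2}}dt.
\end{equation*}
For $2 < p < 3$ this one-dimensional integral converges absolutely as $\varepsilon\to 0^+$ to a finite constant, so $\int_{\Omega}|v_\varepsilon|^p dx \sim \varepsilon^{p/2}$. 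For $p = 3$ the integrand behaves like $1/t$ at infinity, producing $\omega_3 \ln(R/\varepsilon) + O(1) \sim |\ln\varepsilon|$, and therefore $\int_{\Omega}|v_\varepsilon|^3 dx \sim \varepsilon^{3/2}|\ln\varepsilon|$.

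The only delicate point is the borderline case $N = 3$, $p = 3$: I would need to justify carefully that the logarithm indeed dominates and that the outer annulus contribution is a lower-order $O(\varepsilon^{3/2})$ perturbation, so that the $|\ln\varepsilon|$ factor is not cancelled. The remaining subcases are essentially routine estimates once the correct scaling and splitting have been set up.
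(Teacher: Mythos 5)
Your overall strategy --- rescale by $x=\varepsilon y$, pull out the power $\varepsilon^{N-p(N-2)/2}$, and split cases according to whether $(1+|y|^2)^{-p(N-2)/2}$ is integrable at infinity (i.e.\ $p>N/(N-2)$) --- is the standard one for such estimates; the paper itself states Lemma \ref{lem es3} without proof, treating it as a classical Br\'ezis--Nirenberg-type computation, so there is no authorial proof to compare against. Your handling of $N\geq 4$, of $N=3$ with $3<p<6$ (dominated convergence, using $\xi(\varepsilon y)\to 1$ and $0\leq\xi\leq 1$), and of the borderline case $p=3$ is correct.

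There is, however, a genuine error in the subcase $N=3$, $2<p<3$. Your displayed identity drops the Jacobian factor: correctly,
\begin{equation*}
\int_{|x|\leq R}(\varepsilon^2+|x|^2)^{-p/2}\,dx \;=\; \omega_3\,\varepsilon^{3-p}\int_0^{R/\varepsilon}\frac{t^2}{(1+t^2)^{p/2}}\,dt,
\end{equation*}
and, contrary to your assertion, the one-dimensional integral does \emph{not} converge for $2<p<3$: the integrand behaves like $t^{2-p}$ at infinity with $2-p>-1$, so it diverges like $(R/\varepsilon)^{3-p}/(3-p)$. (Your identity happens to be correct at $p=3$, where $\varepsilon^{3-p}=1$, which is why that subcase came out right; note your convergence criterion is exactly backwards --- the integral converges precisely for $p>3$, the regime where you did not need it.) The two mistakes cancel: $\varepsilon^{3-p}\cdot(R/\varepsilon)^{3-p}=R^{3-p}$ is a positive constant, so after multiplying by the prefactor $\varepsilon^{p/2}$ coming from $|U_\varepsilon|^p$ one indeed recovers $\int_\Omega|v_\varepsilon|^p\,dx\sim\varepsilon^{p/2}$. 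One further small inaccuracy: for $2<p<3$ the annulus $R\leq|x|\leq 2R$ contributes a term of the \emph{same} order $O(\varepsilon^{p/2})$ as the inner ball, not a lower-order perturbation; since the lemma only claims two-sided bounds up to constants this is harmless (the inner ball alone supplies the positive lower bound), but your bookkeeping should say so. With the Jacobian factor restored and the divergence of the radial integral acknowledged, the proof is complete and all stated asymptotics are correct.
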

\begin{lemma}\label{lem es4}
	For any $0 < \phi \in L^\infty_{loc}(\Omega)$, we have as $\epsilon \to 0^+$,
	\begin{equation*}
		\int_{\Omega}\phi v_\varepsilon dx \leq 2[N(N-2)]^{(N-2)/4}\sup_{B_\delta}\phi\omega_NR^2\epsilon^{(N-2)/2} + o(\epsilon^{(N-2)/2}),
	\end{equation*}
	\begin{equation*}
		\int_{\Omega}\phi v_\varepsilon^{2^\ast-1} dx \geq \frac{1}{4}[N(N-2)]^{(N+2)/4}\inf_{B_\delta}\phi \omega_N\epsilon^{(N-2)/2} + o(\epsilon^{(N-2)/2}),
	\end{equation*}
	where $\omega_N$ denotes the area of the unit sphere surface.
\end{lemma}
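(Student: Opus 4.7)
The plan is to prove both estimates by direct computation using the explicit form of $U_\varepsilon$, exploiting that $v_\varepsilon = \xi U_\varepsilon$ is supported in $B_{2R}$ while $\xi \equiv 1$ on $B_R$. With the natural choice $\delta = 2R$, the inequalities reduce to asymptotic evaluations of $\int_{B_{2R}} U_\varepsilon\, dx$ and $\int_{B_R} U_\varepsilon^{2^\ast-1}\, dx$.

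For the first (upper) estimate, since $v_\varepsilon \leq U_\varepsilon$ pointwise and vanishes outside $B_{2R}$, one has
\[
\int_\Omega \phi\, v_\varepsilon\, dx \leq \sup_{B_{2R}}\phi \cdot \int_{B_{2R}} U_\varepsilon\, dx,
\]
and spherical coordinates give
\[
\int_{B_{2R}} U_\varepsilon\, dx = [N(N-2)]^{(N-2)/4}\omega_N\, \varepsilon^{(N-2)/2}\! \int_0^{2R} \frac{r^{N-1}\,dr}{(\varepsilon^2+r^2)^{(N-2)/2}}.
\]
I would split the radial integral at $r=\sqrt{\varepsilon}$: the contribution from $[0,\sqrt{\varepsilon}]$ is of order $\varepsilon^{N/2}$, which after multiplication by the $\varepsilon^{(N-2)/2}$ prefactor is $o(\varepsilon^{(N-2)/2})$ (recall the full integrand will be multiplied by $\varepsilon^{(N-2)/2}$, but one can also argue directly that the full inner piece is $o(\varepsilon^{(N-2)/2})$). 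On $[\sqrt{\varepsilon},2R]$ the integrand is pointwise bounded by $r$ and converges to $r$, so by dominated convergence the radial integral tends to $\int_0^{2R} r\,dr = 2R^2$ as $\varepsilon\to 0^+$. This yields the claimed first inequality.

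For the second (lower) estimate, $\xi \equiv 1$ on $B_R$ gives $v_\varepsilon = U_\varepsilon$ there, whence
\[
\int_\Omega \phi\, v_\varepsilon^{2^\ast-1}\, dx \geq \inf_{B_{2R}}\phi \cdot \int_{B_R} U_\varepsilon^{2^\ast-1}\, dx.
\]
The substitution $x=\varepsilon y$ transforms the remaining integral into
\[
[N(N-2)]^{(N+2)/4}\varepsilon^{(N-2)/2}\! \int_{B_{R/\varepsilon}} (1+|y|^2)^{-(N+2)/2}\,dy,
\]
and since the exponent $(N+2)/2$ produces integrable decay at infinity, the truncated integral converges to $\int_{\mathbb{R}^N}(1+|y|^2)^{-(N+2)/2}\,dy = \omega_N/N$ by a standard Beta-function evaluation, with tail $o(1)$ as $\varepsilon\to 0^+$. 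Bounding $\omega_N/N$ from below by $\omega_N/4$, which is comfortable in the dimensions relevant to Proposition \ref{Estimate of the M-P level}, delivers the stated lower bound.

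No genuinely hard step arises: the entire argument is a bookkeeping exercise in tracking the $o(\varepsilon^{(N-2)/2})$ remainders, which in each case reduces to a routine split-domain or dominated-convergence argument on a one-dimensional radial integral.
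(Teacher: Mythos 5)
Your computation follows essentially the same route as the paper's proof: both arguments reduce the two integrals to one-dimensional radial integrals via the pointwise facts $v_\varepsilon \le U_\varepsilon$ on $B_{2R}$ and $v_\varepsilon = U_\varepsilon$ on $B_R$ (the paper rescales $x = \epsilon y$ and truncates at $2R/\epsilon$, resp.\ $R/\epsilon$; your dominated-convergence argument on the unrescaled integral is the same estimate in different clothing, and your split at $r=\sqrt{\varepsilon}$ is redundant since $r^{N-1}(\varepsilon^2+r^2)^{-(N-2)/2} \le r$ already dominates on all of $[0,2R]$). The first inequality is correct as you prove it, with $\delta = 2R$, matching the paper.

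The gap is in your last step for the second inequality. Your Beta-function evaluation is correct and sharp:
\begin{equation*}
\int_{\mathbb{R}^N}(1+|y|^2)^{-(N+2)/2}\,dy \;=\; \omega_N\int_0^\infty \frac{r^{N-1}}{(1+r^2)^{(N+2)/2}}\,dr \;=\; \frac{\omega_N}{2}\,B\!\left(\frac N2,1\right) \;=\; \frac{\omega_N}{N},
\end{equation*}
but the concluding bound ``$\omega_N/N \ge \omega_N/4$'' is false for every $N \ge 5$, and these dimensions are not avoidable: $N=5$ is explicitly allowed in cases (2) and (3) of Proposition \ref{Estimate of the M-P level}, and case (1) ($a=0$) allows all $N \ge 3$. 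So as a proof of the literal statement your argument fails for $N \ge 5$. It is worth noting that the paper's own proof has exactly the same defect: it asserts $\int_0^{R/\epsilon} r^{N-1}(1+r^2)^{-(N+2)/2}\,dr \ge \tfrac14 + o(1)$, whereas this integral converges to $1/N < 1/4$ when $N \ge 5$, so the constant $\tfrac14$ in the statement of Lemma \ref{lem es4} is itself too large in those dimensions; the correct constant is $\tfrac1N$ (you in fact proved the sharper asymptotic equality). The discrepancy is harmless downstream, since the proof of Proposition \ref{Estimate of the M-P level} only uses that the coefficient of $-\epsilon^{(N-2)/2}$ (the constants $K_2, K_4, K_6$) is some positive number independent of $\epsilon$, after which $R$ is taken small. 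To close your proof, either state and prove the lemma with $\tfrac1N$ in place of $\tfrac14$, or restrict the constant-$\tfrac14$ version to $N \in \{3,4\}$; do not claim $1/N \ge 1/4$ in the relevant range.
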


\textit{Proof.  } Direct computations yield that
\begin{align*}
	\int_{\Omega}\phi v_\varepsilon dx =& [N(N-2)]^{\frac{N-2}{4}}\int_{B_{2R}}\phi \xi\left(\frac{\epsilon}{\epsilon^2+|x|^2}\right)^{\frac{N-2}{2}}dx \nonumber \\
	\leq& [N(N-2)]^{\frac{N-2}{4}}\sup_{B_\delta}\phi \epsilon^{\frac{N+2}{2}} \int_{B_{\frac{2R}{\epsilon}}}\left(\frac{1}{1+|x|^2}\right)^{\frac{N-2}{2}}dx \nonumber \\
	=& [N(N-2)]^{\frac{N-2}{4}}\sup_{B_\delta}\phi \omega_N\epsilon^{\frac{N+2}{2}} \int_{0}^{\frac{2R}{\epsilon}}\left(\frac{1}{1+r^2}\right)^{\frac{N-2}{2}}r^{N-1}dr  \nonumber \\
	\leq& 2[N(N-2)]^{\frac{N-2}{4}}\sup_{B_\delta}\phi \omega_NR^2\epsilon^{\frac{N-2}{2}} + o(\epsilon^{\frac{N-2}{2}}),
\end{align*}
\begin{align*}
	\int_{\Omega}\phi v_\varepsilon^{2^\ast-1} dx =& [N(N-2)]^{\frac{N+2}{4}}\int_{B_{2R}}\phi \xi^{2^\ast-1} \left(\frac{\epsilon}{\epsilon^2+|x|^2}\right)^{\frac{N+2}{2}}dx \nonumber \\
	\geq& [N(N-2)]^{\frac{N+2}{4}}\inf_{B_\delta}\phi \epsilon^{\frac{N-2}{2}} \int_{B_{\frac{R}{\epsilon}}}\left(\frac{1}{1+|x|^2}\right)^{\frac{N+2}{2}}dx \nonumber \\
	=& [N(N-2)]^{\frac{N+2}{4}}\inf_{B_\delta}\phi \omega_N\epsilon^{\frac{N-2}{2}} \int_{0}^{\frac{R}{\epsilon}}\left(\frac{1}{1+r^2}\right)^{\frac{N+2}{2}}r^{N-1}dr  \nonumber \\
	\geq& \frac{1}{4}[N(N-2)]^{\frac{N+2}{4}}\inf_{B_\delta}\phi \omega_N\epsilon^{\frac{N-2}{2}} + o(\epsilon^{\frac{N-2}{2}}).
\end{align*}
\qed\vskip 5pt

Now we are prepared to prove Proposition \ref{Estimate of the M-P level}.

\begin{proof}[Proof to Proposition \ref{Estimate of the M-P level}]  Let $w_{\epsilon,s} = u_c + sv_\epsilon, s \geq 0$ where $u_c$ is given by Theorem \ref{thmB.2}. Then $w_{\epsilon,s} > 0$ in $\Omega$. If $v \in H_0^1(\Omega)$, then $v$ can be viewed as a function in $H^1(\mathbb{R}^N)$ by defining $v(x) = 0$ for all $x \notin \Omega$. Define $W_{\epsilon,s} = \mu^{\frac{N-2}{2}}w_{\epsilon,s}(\mu x)$. By taking $\mu = \frac{\|w_{\epsilon,s}\|_{L^2(\Omega)}}{\sqrt{c}} \geq 1$ we obtain $W_{\epsilon,s} \in H_0^1(\Omega)$ and $\|W_{\epsilon,s}\|_{L^2(\Omega)}^2 = c$. Let $\overline{W}_{k} = k^{\frac{N}{2}}W_{\epsilon,\widehat{s}}(kx), k \geq 1$ where $\widehat{s}$ will be determined below. Set
	\begin{align*}
		\psi(k) = E(\overline{W}_{k}) = \frac{k^2}{2}\int_{\Omega}|\nabla W_{\epsilon,\widehat{s}}|^2dx & - \frac{k^{2^\ast}}{2^\ast}\int_{\Omega}|W_{\epsilon,\widehat{s}}|^{2^\ast}dx \\
		& - ak^{\frac{p-2}{2}N}\int_{\Omega}|W_{\epsilon,\widehat{s}}|^{p}dx, \quad k \geq 1.
	\end{align*}
	Then,
	\begin{align}
		\psi'(k) = k\int_{\Omega}|\nabla W_{\epsilon,\widehat{s}}|^2dx & - k^{2^\ast-1}\int_{\Omega}|W_{\epsilon,\widehat{s}}|^{2^\ast}dx \nonumber \\
		& - a\frac{p-2}{2}Nk^{\frac{p-2}{2}N-1}\int_{\Omega}|W_{\epsilon,\widehat{s}}|^{p}dx. \nonumber
	\end{align}
	By Lemmas \ref{lem es1}-\ref{lem es3},
	$$
	\int_{\Omega}|\nabla W_{\epsilon,\widehat{s}}|^2dx = \int_{\Omega}|\nabla u_c|^2dx + \widehat{s}^2\mathcal{S}^{\frac{N}{2}} + o_\epsilon(1),
	$$
	$$
	\int_{\Omega}|W_{\epsilon,\widehat{s}}|^{2^\ast}dx = \int_{\Omega}|u_c|^{2^\ast}dx + \widehat{s}^{2^\ast}\mathcal{S}^{\frac{N}{2}} + o_\epsilon(1),
	$$
	$$
	\int_{\Omega}|W_{\epsilon,\widehat{s}}|^{p}dx = \int_{\Omega}|u_c|^{p}dx + o_\epsilon(1),
	$$
	one can choose $\widehat{s}$ large such that $\psi'(k) < 0$ for all $k > 1$. Hence, $E(\overline{W}_{k}) \leq E(W_{\epsilon,\widehat{s}})$. Furthermore, it not difficult to see that $\psi(k) \to -\infty$ as $k \to \infty$.
	
	Note that
	\begin{align} \label{1}
		E(W_{\epsilon,s}) = &  E(w_{\epsilon,s}) + \frac{a}{p}(1-\mu^{p(\gamma_p-1)})\int_{\Omega}|w_{\epsilon,s}|^{p}dx \nonumber \\
		= E(u_c) & + \frac{s^2}{2}\int_{\Omega}|\nabla v_\epsilon|^2dx + s\int_{\Omega}\nabla u_c \nabla v_\epsilon dx \nonumber \\
		&+ \frac{1}{2^\ast}\int_{\Omega}(|u_c|^{2^\ast}-|u_c+sv_\epsilon|^{2^\ast})dx + \frac{a}{p}\int_{\Omega}(|u_c|^{p}-|u_c+sv_\epsilon|^{p})dx \nonumber \\
		& + \frac{a}{p}(1-\mu^{p(\gamma_p-1)})\int_{\Omega}|w_{\epsilon,s}|^{p}dx.
	\end{align}
	Since $u_c$ satisfies \eqref{eq1.1}, one gets
	\begin{align} \label{2}
		\int_{\Omega}\nabla u_c \nabla v_\epsilon dx = \lambda_c\int_{\Omega}u_c v_\epsilon dx + \int_{\Omega}|u_c|^{2^\ast-2}u_c v_\epsilon dx + a\int_{\Omega}|u_c|^{p-2}u_c v_\epsilon dx.
	\end{align}
	Using $(1+b)^r \geq 1 + rb^{r-1} + b^r$ for any $b> 0$ when $r \geq 2$, we have
	\begin{align} \label{3}
		\frac{1}{2^\ast}\int_{\Omega}(|u_c|^{2^\ast}-|u_c+sv_\epsilon|^{2^\ast})dx
		\leq -\frac{1}{2^\ast}\int_{\Omega}|sv_\epsilon|^{2^\ast}dx - s^{2^\ast-1}\int_{\Omega}u_cv_\epsilon^{2^\ast-1} dx.
	\end{align}
	When $a = 0$, by using \eqref{1}-\eqref{3}, one gets
	\begin{align} \label{4}
		E(W_{\epsilon,s}) \leq E(u_c) & + \frac{s^2}{2}\int_{\Omega}|\nabla v_\epsilon|^2dx -\frac{s^{2^\ast}}{2^\ast}\int_{\Omega}|v_\epsilon|^{2^\ast}dx \nonumber \\
		& + \lambda_cs\int_{\Omega}u_c v_\epsilon dx + s\int_{\Omega}|u_c|^{2^\ast-2}u_c v_\epsilon dx \nonumber \\
		& - s^{2^\ast-1}\int_{\Omega}u_cv_\epsilon^{2^\ast-1} dx.
	\end{align}
	When $s < 1/2$ or $s > 2$, it is easy to see that
	\begin{align} \label{es of E}
		E(W_{\epsilon,s}) < E(u_c) + \frac1N\mathcal{S}^{\frac{N}{2}}.
	\end{align}
	for small $\epsilon$. When $s \in [1/2,2]$, by Lemma \ref{lem es3} and \eqref{4}, there exist $K_1 = K_1(N,u_c,\lambda_c)$ and $K_2 = K_2(N,u_c)$ such that
	\begin{align*}
		E(W_{\epsilon,s}) \leq E(u_c) + \left( \frac{s^2}{2} - \frac{s^{2^\ast}}{2^\ast}\right) \mathcal{S}^{\frac{N}{2}} + (K_1R^2 - K_2)\epsilon^{(N-2)/2} + o(\epsilon^{(N-2)/2}).
	\end{align*}
	Choosing $R$ small such that $K_1R^2 - K_2 < 0$ and $\epsilon$ small enough, we obtain \eqref{es of E}.
	
	When $2 < p \leq 2^\ast - 1$ ($2^\ast > 3$ implies $N \in \{3,4,5\}$), we have
	\begin{align} \label{5}
		\frac{1}{p}\int_{\Omega}(|u_c|^{p}-|u_c+sv_\epsilon|^{p})dx & = s\int_{\Omega}|u_c|^{p-2}u_cv_\epsilon dx + o(\int_{\Omega}|sv_\epsilon|^p dx) \nonumber \\
		& = s\int_{\Omega}|u_c|^{p-2}u_cv_\epsilon dx + o(\epsilon^{\frac{N-2}{2}}).
	\end{align}
	Moreover, note that $\mu > 1$ and $\gamma_p < 1$, and so $1-\mu^{p(\gamma_p-1)} > 0$. Then, when $a < 0$, using \eqref{1}-\eqref{3} and \eqref{5}, one gets
	\begin{align} \label{6}
		E(W_{\epsilon,s}) \leq E(u_c) & + \frac{s^2}{2}\int_{\Omega}|\nabla v_\epsilon|^2dx -\frac{s^{2^\ast}}{2^\ast}\int_{\Omega}|v_\epsilon|^{2^\ast}dx \nonumber \\
		& + \lambda_cs\int_{\Omega}u_c v_\epsilon dx + s\int_{\Omega}|u_c|^{2^\ast-2}u_c v_\epsilon dx + s|a|\int_{\Omega}|u_c|^{p-2}u_cv_\epsilon dx \nonumber \\
		& - s^{2^\ast-1}\int_{\Omega}u_cv_\epsilon^{2^\ast-1} dx + o(\epsilon^{\frac{N-2}{2}}).
	\end{align}
	When $s < 1/2$ or $s > 2$, it is easy to see that \eqref{es of E} holds true for small $\epsilon$. When $s \in [1/2,2]$, by Lemma \ref{lem es3} and \eqref{6}, there exist $K_3 = K_1(N,u_c,\lambda_c,a)$ and $K_4 = K_2(N,u_c)$ such that
	\begin{align*}
		E(W_{\epsilon,s}) \leq E(u_c) + \left( \frac{s^2}{2} - \frac{s^{2^\ast}}{2^\ast}\right) \mathcal{S}^{\frac{N}{2}} + (K_3R^2 - K_4)\epsilon^{(N-2)/2} + o(\epsilon^{(N-2)/2}).
	\end{align*}
	Choosing $R$ small such that $K_3R^2 - K_4 < 0$ and $\epsilon$ small enough, we obtain \eqref{es of E}.
	
	Notice that
	$$
	\mu^2 = 1 + \frac{2s}{c}\int_{\Omega}uv_\epsilon dx + \frac{s^2}{c}\int_{\Omega}|v_\epsilon|^2 dx.
	$$
	When $N \in \{3,4,5\}$, we have
	\begin{align} \label{7}
		1-\mu^{p(\gamma_p-1)} = \frac{s}{c}p(1-\gamma_p)\int_{\Omega}uv_\epsilon dx + o(\epsilon^{\frac{N-2}{2}}).
	\end{align}
	When $a > 0$, using \eqref{1}-\eqref{3} and \eqref{7}, one gets
	\begin{align} \label{8}
		E(W_{\epsilon,s}) \leq E(u_c) & + \frac{s^2}{2}\int_{\Omega}|\nabla v_\epsilon|^2dx -\frac{s^{2^\ast}}{2^\ast}\int_{\Omega}|v_\epsilon|^{2^\ast}dx \nonumber \\
		& + \lambda_cs\int_{\Omega}u_c v_\epsilon dx + s\int_{\Omega}|u_c|^{2^\ast-2}u_c v_\epsilon dx + sa\int_{\Omega}|u_c|^{p-2}u_cv_\epsilon dx \nonumber \\
		& + \frac{s}{c}a(1-\gamma_p)\int_{\Omega}uv_\epsilon dx\int_{\Omega}|w_{\epsilon,s}|^{p}dx \nonumber \\
		& - s^{2^\ast-1}\int_{\Omega}u_cv_\epsilon^{2^\ast-1} dx + o(\epsilon^{\frac{N-2}{2}}).
	\end{align}
	When $s < 1/2$ or $s > 2$, it is easy to see that \eqref{es of E} holds true for small $\epsilon$. When $s \in [1/2,2]$, by Lemma \ref{lem es3} and \eqref{8}, there exist $K_5 = K_1(N,u_c,\lambda_c,a)$ and $K_6 = K_2(N,u_c)$ such that
	\begin{align*}
		E(W_{\epsilon,s}) \leq E(u_c) + \left( \frac{s^2}{2} - \frac{s^{2^\ast}}{2^\ast}\right) \mathcal{S}^{\frac{N}{2}} + (K_5R^2 - K_6)\epsilon^{(N-2)/2} + o(\epsilon^{(N-2)/2}).
	\end{align*}
	Choosing $R$ small such that $K_5R^2 - K_6 < 0$ and $\epsilon$ small enough, we obtain \eqref{es of E}.
	
	Next, we use the estimations above to construct a suitable path on $S_c^+$. We define $\gamma(t) := W_{\epsilon,2t\widehat{s}}$ for $t \in [0,1/2]$ and $\gamma(t) := \overline{W}_{2(t-1/2)k_0+1}$ for $t \in [1/2,1]$ where $k_0$ is large such that $E(\overline{W}_{k_0+1}) < E(u_c)$ and $\overline{W}_{k_0+1} \notin \mathcal{G}$. Then $\gamma \in \Gamma$ and $\sup\limits_{t \in [0,1]}E(\gamma) < E(u_c) + \frac1N\mathcal{S}^{\frac{N}{2}}$, implying that $$m(c) < E(u_c) + \frac1N\mathcal{S}^{\frac{N}{2}} = \nu_c  + \frac1N\mathcal{S}^{\frac{N}{2}}.$$ We complete the proof.
\end{proof}

Finally in this article, we complete the proof of Theorem \ref{thmB.5}.

\vskip0.1in
\textit{Proof to Theorem \ref{thmB.5}.  } By Proposition \ref{bounded (PS) sequence}, there is a $H_0^1(\Omega)$-bounded sequence $\{u_n\} \subset S_c^+$ such that $\lim\limits_{n \to \infty}E(u_n) = m(c)$ and that $(E|_{S_c^+})'(u_n) \to 0$ as $n \to \infty$. Up to a subsequence, we assume that
\begin{equation*}
\begin{aligned}
& u_n \rightharpoonup \tilde{u}_c \quad \text{ weakly in } H_0^1(\Omega), \\
& u_n \rightharpoonup \tilde{u}_c \quad \text{ weakly in } L^{2^\ast}(\Omega),\\
& u_n \to \tilde{u}_c \quad \text{ strongly in } L^r(\Omega) \text{ for } 2\leq r <2^\ast,\\
& u_n \to \tilde{u}_c \quad \text{ almost everywhere in } \Omega.
\end{aligned}
\end{equation*}
It can be verified that $\tilde{u}_c \in S_c^+$ is a positive solution of \eqref{eq1.1}. Let $w_n = u_n - \tilde{u}_c$. Since $(E|_{S_c^+})'(u_n) \rightarrow 0$, there exists $\lambda_n$ such that $E'(u_n) - \lambda_nu_n^+ \to 0$. Let $\tilde{\lambda}_c$ be the Lagrange multiplier correspond to $\tilde{u}_c$. Similar to the proof to Theorem \ref{thmB.2}, we have
\begin{equation*}
\int_{\Omega}|\nabla w_n|^2dx = \int_{\Omega}|w_n|^{2^\ast}dx + o_n(1).
\end{equation*}
Hence we assume that $\int_{\Omega}|\nabla w_n|^2dx \to l \geq 0, \int_{\Omega}|w_n|^{2^\ast}dx \to l \geq 0$. From the definition of $\mathcal{S}$ we deduce that
$$
\int_{\Omega}|\nabla w_n|^2dx \geq \mathcal{S}\left( \int_{\Omega}|w_n|^{2^\ast}dx\right) ^{\frac{2}{2^\ast}},
$$
implying $l \geq \mathcal{S}l^{2/2^\ast}$. We claim that $l = 0$. Suppose on the contrary that $l > 0$, then $l \geq \mathcal{S}^{N/2}$, implying that
$$
E(w_n) \geq \frac{1}{N}\mathcal{S}^{\frac{N}{2}} + o_n(1).
$$
Further, using the Br\'{e}zis-Lieb Lemma (see \cite{BL}) one gets
\begin{align} \label{final1}
E(u_n) = E(\tilde{u}_c) + E(w_n) + o_n(1) \geq \nu_{c} + \frac{1}{N}\mathcal{S}^{\frac{N}{2}} + o_n(1),
\end{align}
where we use the fact that $\nu_{c}$ is the energy level corresponding to the normalized ground state, implying $E(\tilde{u}_c) \geq \nu_c$. On the other hand, Proposition \ref{Estimate of the M-P level} yields that
\begin{align} \label{final2}
E(u_n)  + o_n(1)= m(c) < \nu_{c} + \frac{1}{N}\mathcal{S}^{\frac{N}{2}}.
\end{align}
Combining \eqref{final1} and \eqref{final2} we have a self-contradictory inequality (when $n$ large)
\begin{align*}
\nu_{c} + \frac{1}{N}\mathcal{S}^{\frac{N}{2}} + o_n(1) < m(c) < \nu_{c} + \frac{1}{N}\mathcal{S}^{\frac{N}{2}}.
\end{align*}
Thus we prove the claim and so $l = 0$. This shows $u_n \to u_\theta$ strongly in $H_0^1(\Omega)$. Then, quite similar to the proof to Proposition \ref{Positive solutions for almost every theta} we can complete the proof.
\qed\vskip 5pt

\textbf{Acknowledgement:}  Linjie Song is supported by "Shuimu Tsinghua Scholar Program" and by "National Funded Postdoctoral Researcher Program" (GZB20230368). This work is supported by  National Key R\&D Program of China (Grant 2023YFA1010001) and NSFC(12171265).

\end{document}